\theoremstyle{plain}
\def\A{\mathcal A}
\def\B{\mathcal B}
\def\F{\mathcal F}
\def\K{\mathcal K}
\def\B{\mathcal B}
\def\H{\mathcal H}
\def\K{\mathcal K}
\def\M{\mathcal M}
\def\N{\mathcal N}
\def\P{\mathcal P}
\def\R{\mathcal R}
\def\NNN{\mathbb N}
\newtheorem{theorem}{Theorem}[section]
\newtheorem{corollary}[theorem]{Corollary}
\newtheorem{definition}[theorem]{Definition}
\newtheorem{example}[theorem]{Example}
\newtheorem{lemma}[theorem]{Lemma}
\newtheorem{problem}[theorem]{Problem}
\newtheorem{remark}[theorem]{Remark}
\newcommand{\LeftEqNo}{\let\veqno\@@leqno}
\newcommand{\subalign}[1]{%
  \vcenter{%
    \Let@ \restore@math@cr \default@tag
    \baselineskip\fontdimen10 \scriptfont\tw@
    \advance\baselineskip\fontdimen12 \scriptfont\tw@
    \lineskip\thr@@\fontdimen8 \scriptfont\thr@@
    \lineskiplimit\lineskip
    \ialign{\hfil$\m@th\scriptstyle##$&$\m@th\scriptstyle{}##$\crcr
      #1\crcr
    }%
  }
}
\numberwithin{equation}{section}
\begin{document}

\

\vspace{-2cm}


\title[Normed ideal perturbation of irreducible operators]{Normed ideal perturbation of irreducible operators in semifinite von Neumann factors}

\author{Rui Shi}
\address{School of Mathematical Sciences, Dalian University of
Technology, Dalian, 116024, China}
\email{ruishi@dlut.edu.cn, ruishi.math@gmail.com}


\subjclass[2010]{Primary 47C15}


\keywords{factor von Neumann alegbra, irreducible operator, normed ideal}

\begin{abstract}
	In \cite{Hal}, Halmos  proved an interesting result that the set of irreducible operators is dense in $\B(\H)$ in the sense of Hilbert-Schmidt approximation. In a von Neumann algebra $\M$ with separable predual, an operator $a\in \M$ is said to be \emph{irreducible in} $\M$ if $W^*(a)$ is an irreducible subfactor of $\M$, i.e., $W^*(a)'\cap \M={\mathbb C} \cdot I$. In this paper, let $\Phi(\cdot)$ be a $\Vert\cdot\Vert$-dominating, unitarily invariant norm (see Definition \ref{prelim_def1}), where by $\Vert\cdot\Vert$ we denote the operator norm. We prove that in every semifinite von Neumann factor $\M$ with separable predual, if the norm $\Phi(\cdot)$ satisfies a natural restriction introduced in (\ref{not-equiv-to-trace-norm}), then irreducible operators are $\Phi(\cdot)$-norm dense in $\M$. In particular, the operator norm $\Vert\cdot\Vert$ and the $\max\{\Vert\cdot\Vert, \Vert\cdot\Vert_p\}$-norm (for each $p>1$) naturally satisfy the condition in (\ref{not-equiv-to-trace-norm}), where $\tau$ is a faithful, normal, semifinite, tracial weight and $\Vert x\Vert_p=\tau(|x|^p)^{1/p}$ for all $x\in \M \cap L^{p}(\M,\tau)$ (see \cite[Preliminaries]{Pisier}). This can be viewed as a (stronger) analogue of a theorem of Halmos in \cite{Hal}, proved with different techniques developed in semifinite, properly infinite von Neumann factors. 
	
	Meanwhile,  for every $\Vert\cdot\Vert$-dominating, unitarily invariant norm $\Phi(\cdot)$, we develop another method to prove that each normal operator in $\M$ is a sum of an irreducible operator in $\M$ and an arbitrarily small $\Phi(\cdot)$-norm perturbation, where the $\Phi(\cdot)$-norm isn't restricted by (\ref{not-equiv-to-trace-norm}).   Particularly, the $\Phi(\cdot)$-norm can be the $\max\{\Vert\cdot\Vert, \Vert\cdot\Vert_1\}$-norm.    
\end{abstract}

\maketitle

\section{Introduction}

Let $\H$ be a Hilbert space.  We denote by $\mathcal{B}(\mathcal{H})$ the set of bounded linear operators on $\H$. Recall that an operator $a\in \B(\H)$ is \emph{irreducible} if $a$ has no nontrivial reducing subspaces. That is, if $p$ is a projection in $\mathcal{B}(\mathcal{H})$ such that $pa=ap$ then $p=0$ or $p=I$. 

A \emph{von Neumann  algebra} is a unital $\ast$-subalgebra of $\mathcal{B}(\mathcal{H})$ that is closed in the weak operator topology and contains the identity $I$. A \emph{factor} (or  \emph{von Neumann factor}) is a von Neumann algebra whose center consists of scalar multiples of the identity. Factors are   classified by  Murray and von Neumann into type I$_n$,  I$_\infty$,  II$_1$, II$_\infty$, and  III factors (see \cite {Murray}). By definition, $\mathcal B(\mathcal H)$ is a type I factor.

\emph{In the current paper, Hilbert spaces are always assumed to be complex and separable.} In \cite[Theorem]{Hal}, Halmos proved that the set of irreducible operators on $\H$ is a dense $G_\delta$ subset of $\B(\H)$ in the $\Vert\cdot\Vert$-norm topology, where by $\Vert\cdot\Vert$-norm we denote the operator norm. In \cite{Rad}, Radjavi and Rosenthal gave another short proof.  In addition, Halmos also mentioned that the set of irreducible operators is dense in $\B(\H)$ in the sense of Hilbert-Schmidt approximation at the end of \cite[Section 1]{Hal}. In the rest of the current paper, we will refer to this approximation result as \emph{Halmos' theorem}.

Inspired by Halmos' theorem, we can naturally extend the definition of irreducible operator in the setting of von Neumann factors.

\begin{definition}\label{irreducible-opt}
	Let $\M\subseteq \mathcal B(\mathcal H)$ be a   factor. An operator $a\in\M$ is said to be irreducible in $\M$, if $W^*(a)$ is an irreducible subfactor of $\M$, i.e., $W^*(a)'\cap \M={\mathbb C} I$, where $W^*(a)$ is the von Neumann algebra generated by $a$ and the identity $I$.
\end{definition}  

Actually, in each type of von Neumann factors with separable predual, there are irreducible operators. In \cite{Pearcy_2, Saito_2}, the authors proved, independently, that there exists a type ${\rm II}_{1}$ factor $\R$ with a single generator. Apparently, in $\R$, this generator is irreducible.  In \cite{Wogen}, Wogen showed that every properly infinite von Neumann algebra on a separable Hilbert space is singly generated. Thus, in a type ${\rm II}_{\infty}$ factor or a type ${\rm III}$ factor on $\H$, each single generator is irreducible. Recently, the authors of \cite{Fang} proved that in each type of von Neumann factor $\M$ with separable predual, the set of irreducible operators in $\M$ is $G_{\delta}$ and $\Vert \cdot \Vert$-norm dense. 

Recall that a factor $\M$ acting on $\H$ is semifinite and properly infinite, if $\M$ is either of type ${\rm I}_{\infty}$ or of type ${\rm II}_{\infty}$. In this case, we can further assume that there exists a faithful, normal, semifinite, tracial weight $\tau$ on $\M$. By virtue of Theorem 6.8.7 of \cite{Kadison2}, the $\Vert\cdot\Vert$-norm closure $\K(\M,\tau)$ of the two-sided ideal $\F(\M,\tau)=\{x\in\M: \tau(R(x))<\infty\}$ is the only proper, $\Vert\cdot\Vert$-norm closed, two sided ideal in the factor $\M$, where by $R(x)$ we denote the range projection of $x$, for every $x\in\M$.

In the current paper, first, we prove an analogue of Halmos' theorem in the setting of semifinite von Neumann factors with separable predual, with respect to a $\Vert\cdot\Vert$-dominating, unitarily invariant norm satisfying a natural restriction. Precisely, we prove the following theorem.

\vspace{0.2cm} {T{\scriptsize HEOREM}} \ref{main-thm}.
\emph{Suppose that $(\mathcal{M},\tau)$ is a semifinite, properly infinite von Neumann factor with separable predual, where $\tau$ is a faithful, normal, semifinite, tracial weight. Let $\mathcal{K}_{\Phi}(\mathcal{M},\tau)$ be a normed ideal of $(\mathcal{M},\tau)$ equipped with a $\Vert\cdot\Vert$-dominating, unitarily invariant norm $\Phi(\cdot)$ defined as in Definition $\ref{prelim_def1}$. Assume that
	\begin{equation}\label{not-equiv-to-trace-norm}
	\displaystyle
	\lim_{\substack{\tau(e)\rightarrow\infty\\ e\in \mathcal{P}\mathcal{F}(\mathcal{M},\tau)}}\frac{\Phi(e)}{\tau(e)} =0. 
	\end{equation}
	Then for each $x\in \mathcal{M} $ and every $\epsilon>0$, there exists an irreducible operator $y$ in $\mathcal{M}$ such that
	\begin{enumerate}
		\item[$(i)$] \quad $x-y\in{\mathcal{K}^{0}_{\Phi}(\mathcal{M},\tau)^{}}$ \quad  (see Definition $\ref{prelim_def1}$); 
		\item[$(ii)$] \quad $\Phi(x-y)\le\epsilon$. 
	\end{enumerate}
	In other words, if a $\Vert\cdot\Vert$-dominating, unitarily invariant norm $\Phi(\cdot)$ satisfies $(\ref{not-equiv-to-trace-norm})$, then irreducible operators in $\M$ are $\Phi(\cdot)$-norm dense in $(\M,\tau)$.}

We make several quick comments about Theorem \ref{main-thm}. (1) The result is also true without the assumption that $\M$ is `properly infinite'. When $\M$ is finite, the proof of Theorem \ref{main-thm}  is almost the same and easier than the case when $\M$ is properly infinite.  If $(\mathcal{M},\tau)$ is properly infinite, then $\mathcal{K}_{\Phi}(\mathcal{M},\tau)$ is a `nontrivial' normed ideal of $(\mathcal{M},\tau)$. This case is more interesting for Theorem  \ref{main-thm}. (2) Note that Halmos' proof in \cite{Hal} is based on minimal projections and there are no minimal projections in type ${\rm II}$ factors. Thus to prove Theorem \ref{main-thm}, we develop new techniques. (3) In the setting of $\B(\H)$, the restriction $(\ref{not-equiv-to-trace-norm})$ holds if and only if that the $\Phi(\cdot)$-norm is not equivalent to the $\Vert\cdot\Vert_1$-norm  (see \cite[Lemma 1]{Kuroda}). Enlightened by this characterization, we further prove the following theorem.

\vspace{0.2cm} {T{\scriptsize HEOREM}} \ref{normal-to-irreducible}.
\emph{Suppose that $(\mathcal{M},\tau)$ is a semifinite von Neumann factor with separable predual, where $\tau$ is a faithful, normal, semifinite, tracial weight. Let $\mathcal{K}_{\Phi}(\mathcal{M},\tau)$ be a normed ideal of $(\mathcal{M},\tau)$ equipped with a $\Vert\cdot\Vert$-dominating, unitarily invariant norm $\Phi(\cdot)$ $(\mbox{see Definition } \ref{prelim_def1})$.  }

	\emph{For each normal operator $x$ in $ \M$ and every $\epsilon>0$, there is an irreducible operator $y$ in $\mathcal{M}$ such that
	\begin{equation*}
		x-y\in{\mathcal{K}^{0}_{\Phi}(\mathcal{M},\tau)^{}} \quad \mbox{ and } \quad \Phi(x-y)\le\epsilon.
	\end{equation*}
	}

	Note that, in the setting of $\B(\H)$, the Kato-Rosenblum's theorem states that a self-adjoint operator $a\in\B(\H)$ with a non-vanishing spectrally absolutely continuous part, can't be expressed as a diagonal operator plus an arbitrarily small trace norm perturbation. This leads to the following question.
	
	\begin{problem}\label{Problem-1.2}
		Let $(\mathcal{M},\tau)$ be a semifinite, properly infinite von Neumann factor  with separable predual, where $\tau$ is a faithful, normal, semifinite, tracial weight. For every $\Vert\cdot\Vert$-dominating, unitarily invariant norm $\Phi(\cdot)$, are irreducible operators $\Phi(\cdot)$-norm dense in $\M$?
	\end{problem}

	By virtue of Theorem \ref{normal-to-irreducible}, every normal operator in $\M$ can be expressed as an irreducible operator in $\M$ up to an arbitrarily small  $\Phi(\cdot)$-norm perturbation. Furthermore, in terms of Lemma \ref{lemma-case-1} and Lemma \ref{lemma-case-2}, some non-normal operators in $\M$ can be also expressed as irreducible operators in $\M$ up to arbitrarily small  $\Phi(\cdot)$-norm perturbations.  These are positive evidences for Problem \ref{Problem-1.2}.
	
	The paper is organized as follows. In Section 2, we prepare necessary notation and results. A generalized Weyl-von Neumann type theorem for self-adjoint operators, normed ideals equipped with $\Vert\cdot\Vert$-dominating, unitarily invariant norms, and the universal irrational rotation algebras are recalled in this section. Besides, we cite Popa's results \cite{Pop,Popa2} about the existence of an irreducible hyperfinite subfactor $\N$ of a type II factor $\M$ and the existence of a Cartan masa $\A$ of $\N$ which is also a masa in $\M$. In Section 3, we prove Theorem \ref{main-thm} in two cases: the $\B(\H)$-case and the type $\rm{II}_{\infty}$ factor case. As we mentioned above, since there are no minimal projections in type $\rm{II}_{\infty}$ factors, we use techniques from universal irreducible rotation algebras and Popa's results to prove Theorem \ref{main-thm}. In Section 4, we develop a series of lemmas to prove Theorem \ref{normal-to-irreducible} for normal operators in $\M$ to be irreducible operators up to arbitrarily small, $\Vert\cdot \Vert$-dominating, unitarily invariant $\Phi(\cdot)$-norm perturbations.

\section{Preliminaries}

\subsection{Normed ideal perturbations and an extended Weyl-von Neumann theorem for self-adjoint operators in properly infinite, semifinite von Neumann algebras}

\ \newline

In this section, we prepare some useful lemmas for the main results. Since a generalized Weyl-von Neumann theorem for self-adjoint operators (Theorem 3.2.2 of \cite{Li}) is applied in the proof of the main result in the current paper, we briefly recall some definitions and results.

In 1909, Weyl \cite{Weyl} proved that a self-adjoint operator  in $\mathcal{B}(\mathcal{H})$ is a compact perturbation of a diagonal operator. Later, in 1935, von Neumann \cite{Von2} improved the result by replacing a  ``compact operator'' with an ``arbitrarily small Hilbert-Schmidt operator''.  

Recall that an operator $d$ in $\mathcal{B}(\mathcal{H})$ is called \emph{diagonal} if there exist a family $\{e_n\}_{n=1}^\infty$ of orthogonal projections in $\mathcal{B}(\mathcal{H})$ and a family $\{\lambda_n\}_{n=1}^\infty$ of complex numbers such that $d=\sum_{n=1}^\infty\lambda_ne_n$.

In \cite{Kuroda}, Kuroda generalized the Weyl-von Neumann theorem for every single self-adjoint operator in $\mathcal{B}(\mathcal{H})$ with respect to a unitarily invariant norm which is not equivalent to the trace norm. More specifically, given $\epsilon>0$ and $\Phi(\cdot)$ a unitarily invariant norm \emph{not equivalent to the trace norm}, for every self-adjoint operator $a$ in $\mathcal{B}(\mathcal{H})$, there exists a diagonal self-adjoint operator $d$ in $\mathcal{B}(\mathcal{H})$ such that $a-d$ is compact and $\Phi(a-d)\le \epsilon$.

To answer a problem attributed to Halmos concerning Hilbert-Schmidt perturbations of normal operators, Voiculescu \cite{Voi} proved that $n$-tuples of commuting self-adjoint operators, for $n\geq 2$, are $\mathscr{C}_n$-perturbations of diagonal $n$-tuples of commuting self-adjoint operators, where by $\mathscr{C}_n$ we denote the Schatten $n$-class operators in $\B(\H)$. 

In \cite{Voi}, to prove that every normal operator is a sum of a diagonal operator and an arbitrarily small Hilbert-Schmidt perturbation, Voiculescu developed important techniques associated with normed ideals (in terms of unitarily invariant norms introduced by Schatten). The reader is referred to \cite{Schatten} and \cite{Gohberg} for details about normed ideals.

Recently, the authors of \cite{Li} extended the definition of normed ideals in countably decomposable, semifinite, properly infinite von Neumann algebras. The definition is cited as follows.

\begin{definition}[Definition 2.1.1 of \cite{Li}]\label{prelim_def1} 
	Suppose that $(\M,\tau)$ is a countably decomposable, semifinite, properly infinite von Neumann algebra with a faithful, normal, semifinite, tracial weight $\tau$.  
	
	A {normed ideal} $\mathcal{K}_\Phi(\mathcal{M},\tau)$ of $(\mathcal{M},\tau)$ is a two sided ideal of $(\M,\tau)$ equipped with a norm $\Phi : \mathcal{K}_\Phi(\mathcal{M},\tau)\rightarrow [0,\infty)$, which satisfies
	\begin{enumerate}
		\item[(i)] \quad $\displaystyle \Phi(uxv)=\Phi(x)$ for all $x\in \mathcal{K}_\Phi(\mathcal{M},\tau)$ and unitary elements $u,v$ in $\mathcal{M}$, i.e. the norm $\Phi(\cdot)$ is unitarily invariant;
		\item[(ii)] \quad there exists a $\lambda>0$ such that $\displaystyle \Phi(x) \ge\lambda\|x\|$ for all $x\in \mathcal{K}_\Phi(\mathcal{M},\tau)$, i.e. the $\Phi(\cdot)$-norm is $\|\cdot \|$-dominating, where $\Vert\cdot \Vert$-norm means the operator norm;
		\item[(iii)] \quad $\mathcal{K}_\Phi(\mathcal{M},\tau)$ is a Banach space with respect to the norm $\Phi(\cdot)$;
		\item[(iv)] \quad $\displaystyle \mathcal{F}(\mathcal{M},\tau)=\{x\in\M: \tau(R(x))<\infty\}\subseteq \mathcal{K}_\Phi(\mathcal{M},\tau) \subseteq \mathcal{K }(\mathcal{M},\tau)$, where $\mathcal{K }(\mathcal{M},\tau)$ is the $\Vert\cdot \Vert$-norm closure of $\mathcal{F}(\mathcal{M},\tau)$ and $R(x)$ is the range projection of $x$, for every $x$ in $\M$.
	\end{enumerate}
	
	The $\Phi(\cdot)$-norm closure of $\mathcal{F}(\mathcal{M},\tau)$ in $\mathcal{K}_\Phi(\mathcal{M},\tau)$ will be denoted by $\mathcal{K}_\Phi^0(\mathcal{M},\tau)$, which is also a normed ideal of $(\mathcal{M},\tau)$. If $\mathcal{K}_\Phi^0(\mathcal{M},\tau)=\mathcal{K}_\Phi(\mathcal{M},\tau)$, then $\mathcal{K}_\Phi(\mathcal{M},\tau)$ is called a {minimal} normed ideal of $(\mathcal{M},\tau)$.
\end{definition}

\begin{remark}
	For the purpose of convenience, if $x\notin \mathcal{K}_\Phi(\mathcal{M},\tau)$, then we set $\Phi(x)=\infty$.
\end{remark}

\begin{example}
	The norm $\Phi(\cdot)$ is a natural generalization of the Schatten $p$-norm for $p\geq 1$. See \cite{Voi}, \cite{Schatten} and \cite{Gohberg} for examples of normed ideals when $(\mathcal M,\tau)=(\mathcal B(\mathcal H),\operatorname{Tr})$, where   $\mathcal H$ is a separable complex Hilbert space and $\operatorname{Tr}$ is the canonical trace of $\mathcal B(\mathcal H)$.
\end{example}

\begin{example}
	The $\Vert\cdot\Vert$-norm closure $\K(\M,\tau)$ of the two-sided ideal $\F(\M,\tau)$ is a normed ideal of $(\mathcal{M},\tau)$ with respect to the $\|\cdot\|$-norm.
\end{example}

For convenience, some useful properties of a normed ideal are listed in the next lemma.

\begin{lemma}[Lemma 2.1.5 of \cite{Li}]\label{prelim_lemma1} 
	Suppose that $\mathcal{K}_\Phi(\mathcal{M},\tau)$ is a normed ideal in $(\mathcal{M},\tau)$. Then the following statements are true.
	\begin{enumerate}
	\item[(i)] \quad $\Phi(axb)\le \|a\|\Phi(x) \|b\|, $ for all $x\in \mathcal{K}_\Phi(\mathcal{M},\tau)$ and $a,b\in\mathcal{M}$.
	
	\item[(ii)] \quad  If $x\in \mathcal{K}_\Phi(\mathcal{M},\tau)$, then $x^*\in\mathcal{K}_\Phi(\mathcal{M},\tau)$ and $\Phi(x)=\Phi(x^*)=\Phi(|x|)$.
	
	\item[(iii)] \quad  If $x, y\in \mathcal{K}_\Phi(\mathcal{M},\tau)$ with $0\le x\le y$, then $\Phi(x)\le \Phi(y)$.
	
	\item[(iv)] \quad  If $x \in \mathcal{F}(\mathcal{M},\tau)$, then $\Phi(x) \le \|x\|\Phi(R(x))$.
	
	\item[(v)] \quad  Suppose that $\{x_n\}_{n=1}^\infty\subseteq \mathcal{K}_\Phi^0(\mathcal{M},\tau)$ such that
	\begin{enumerate}
	\item [(1)] \quad $\sum_n x_n$ converges to $x\in\mathcal{M}$  in the weak$^{\ast}$-topology, and
	\item [(2)] \quad $\sum_n\Phi(x_n)<\infty$.
	\end{enumerate}
	Then $x\in \mathcal{K}_\Phi^0(\mathcal{M},\tau)$ and $\lim_k\Phi(x-\sum_{n=1}^k x_n)=0$.
	\end{enumerate}
\end{lemma}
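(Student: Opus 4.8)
The plan is to deduce all five items from the three structural properties in Definition~\ref{prelim_def1} — unitary invariance $\Phi(uxv)=\Phi(x)$, the $\Vert\cdot\Vert$-domination $\Phi(\cdot)\ge\lambda\Vert\cdot\Vert$, and completeness of $(\K_\Phi(\M,\tau),\Phi)$ — with the help of two standard facts about von Neumann algebras, namely that every element of $\M$ of operator norm $<1$ is a finite average of unitaries (Russo--Dye, in the quantitative form of Kadison--Pedersen) and Douglas' factorisation lemma, together with routine polar decomposition. Everything reduces to~(i), so I would prove the items in the order (i)$\Rightarrow$(ii)$\Rightarrow$(iii)$\Rightarrow$(iv), treating~(v) separately. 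For~(i): by composing it suffices to establish $\Phi(ax)\le\Vert a\Vert\,\Phi(x)$ and $\Phi(xb)\le\Vert b\Vert\,\Phi(x)$ for $a,b\in\M$ (every such product lies in $\K_\Phi(\M,\tau)$ by the ideal property, so all these $\Phi(\cdot)$ are finite). Fix $a\neq 0$ and $t>\Vert a\Vert$; then $\Vert a/t\Vert<1$, so $a/t=\tfrac1n\sum_{i=1}^{n}u_i$ with $u_i\in\M$ unitary for a suitable $n$, and since $\Phi(u_ix)=\Phi(u_ixI)=\Phi(x)$ the triangle inequality gives $\Phi\!\left(\tfrac1t ax\right)\le\tfrac1n\sum_i\Phi(u_ix)=\Phi(x)$; by homogeneity $\Phi(ax)\le t\,\Phi(x)$, and letting $t\downarrow\Vert a\Vert$ yields $\Phi(ax)\le\Vert a\Vert\,\Phi(x)$. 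The bound $\Phi(xb)\le\Vert b\Vert\,\Phi(x)$ is obtained the same way, using $\Phi(xv)=\Phi(Ixv)=\Phi(x)$ for unitary $v$.

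For~(ii)--(iv) I would combine~(i) with polar decomposition. Writing $x=w|x|$ with $w$ the polar partial isometry, one has $|x|=w^*x$ and $x^*=|x|w^*$ (so $|x|,x^*\in\K_\Phi(\M,\tau)$ by the ideal property), as well as $|x^*|=wx^*=w|x|w^*$, $x^*=w^*|x^*|$ and $|x|=w^*|x^*|w$; feeding each of these identities into~(i) in the appropriate direction produces the chain $\Phi(x)=\Phi(|x|)=\Phi(|x^*|)=\Phi(x^*)$, which is~(ii). For~(iii), given $0\le x\le y$ with $x,y\in\K_\Phi(\M,\tau)$, Douglas' lemma furnishes a contraction $c\in\M$ with $x^{1/2}=y^{1/2}c$; hence $x=(y^{1/2}c)(y^{1/2}c)^*=ww^*$ with $w:=y^{1/2}c$ and $w^*w=c^*yc\in\K_\Phi(\M,\tau)$, and applying the polar decomposition of $w$ and~(i) (the same device as in~(ii)) gives $\Phi(x)=\Phi(ww^*)=\Phi(w^*w)=\Phi(c^*yc)\le\Vert c\Vert\,\Phi(y)\,\Vert c\Vert\le\Phi(y)$. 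For~(iv), if $x\in\F(\M,\tau)$ then $R(x)$ and $R(x^*)$ have finite trace, hence lie in $\F(\M,\tau)\subseteq\K_\Phi(\M,\tau)$, and $\Phi(R(x))=\Phi(R(x^*))$ by feeding $R(x)=ww^*=w(w^*w)w^*$ into~(i) in both directions; since $0\le|x|\le\Vert x\Vert\,R(x^*)$, item~(iii) gives $\Phi(|x|)\le\Vert x\Vert\,\Phi(R(x^*))$, and with~(ii) this becomes $\Phi(x)\le\Vert x\Vert\,\Phi(R(x))$.

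Item~(v) is essentially a completeness argument. Since $(\K_\Phi(\M,\tau),\Phi)$ is a Banach space and $\K^0_\Phi(\M,\tau)$ is the $\Phi(\cdot)$-closure of $\F(\M,\tau)$ inside it, $\K^0_\Phi(\M,\tau)$ is complete for $\Phi(\cdot)$; the hypothesis $\sum_n\Phi(x_n)<\infty$ then makes the partial sums $S_k=\sum_{n=1}^{k}x_n$ $\Phi(\cdot)$-Cauchy, hence $\Phi(\cdot)$-convergent to some $S\in\K^0_\Phi(\M,\tau)$. By $\Phi(\cdot)\ge\lambda\Vert\cdot\Vert$ we also get $\Vert S_k-S\Vert\to0$, so $S_k\to S$ in the weak$^*$-topology; since $S_k\to x$ weak$^*$ by hypothesis and weak$^*$ limits are unique, $x=S\in\K^0_\Phi(\M,\tau)$ and $\lim_k\Phi(x-S_k)=0$. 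I do not expect a genuine obstacle anywhere in this lemma; the only two points that need a little care are obtaining the sharp constant $\Vert a\Vert$ (rather than merely $1$) in~(i), which the scaling $t\downarrow\Vert a\Vert$ handles, and checking at each step that the operators involved remain inside the two-sided ideal $\K_\Phi(\M,\tau)$, so that~(i) and~(iii) are legitimately applicable.
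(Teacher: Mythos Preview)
Your argument is correct in every part. The reduction of (i) to unitary invariance via the Kadison--Pedersen form of Russo--Dye is the standard route; the polar-decomposition bootstraps for (ii)--(iv) are clean (in particular, your passage $\Phi(ww^*)=\Phi(w^*w)$ via $ww^*=v(w^*w)v^*$ and $w^*w=v^*(ww^*)v$ is exactly what is needed to handle both (iii) and the $\Phi(R(x))=\Phi(R(x^*))$ step in (iv)); and the completeness argument in (v) is airtight once you observe that $\K_\Phi^0(\M,\tau)$ is $\Phi$-closed in the Banach space $\K_\Phi(\M,\tau)$ and that $\Phi$-convergence forces $\Vert\cdot\Vert$-convergence, hence weak$^*$-convergence.

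As for comparison with the paper: the present paper does not give its own proof of this lemma at all --- it simply refers the reader to Lemma~2.1.5 of \cite{Li}. So your write-up is strictly more than what appears here, and there is nothing in the paper to compare strategies against.
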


The reader is referred to Lemma 2.1.5 of \cite{Li} for a quick proof. There are more examples of normed ideals of $(\mathcal{M},\tau)$ from the next lemma. Recall that $L^r(\mathcal{M},\tau)$, for $1\le r<\infty$, is the non-commutative $L^r$-space associated with $(\mathcal{M},\tau)$ and its norm $\|\cdot\|_r$ is defined by
\begin{equation}\label{p-norm}
\|x\|_r=(\tau(|x|^r)^{1/r}, \ \ \forall \ x\in L^r(\mathcal{M},\tau)
\end{equation}
(see \cite{Pisier} for more details).

\begin{lemma}[Lemma 2.1.6 of \cite{Li}]\label{prelim_lemma2} 
	Let $1\le r<\infty$ and $\mathcal{J}=L^r(\mathcal{M},\tau)\cap \mathcal{M}$. Define a mapping $\Phi(\cdot)$ on $\mathcal{J}$ by
	\begin{equation*}
		\Phi(x) =\max \{ \|x\|_r,\|x\| \}, \ \ \forall \ x\in \mathcal{J}.
	\end{equation*}
	Then $\mathcal{J}$ is a normed ideal of $(\mathcal{M},\tau)$ with respect to the norm $\Phi(\cdot)$. Furthermore, $\mathcal{J}$ is actually a minimal normed ideal of $(\mathcal{M},\tau )$ with respect to the norm $\Phi(\cdot)$.
\end{lemma}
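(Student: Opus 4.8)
The plan is to verify, in turn, the defining conditions (i)--(iv) of a normed ideal from Definition \ref{prelim_def1}, together with the facts that $\mathcal{J}=L^r(\M,\tau)\cap\M$ is a two-sided ideal and that $\Phi(\cdot)$ is a genuine norm on it, and then to establish minimality by a single spectral truncation argument that simultaneously yields the inclusion $\mathcal{J}\subseteq\K(\M,\tau)$ in (iv) and the $\Phi(\cdot)$-density of $\F(\M,\tau)$. I expect most of the conditions to be routine consequences of the standard theory of non-commutative $L^r$-spaces, so the two points deserving genuine care are the completeness of $(\mathcal{J},\Phi)$ and the minimality assertion.

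First I would record the algebraic and elementary metric facts. That $\mathcal{J}$ is a two-sided ideal follows from the $\M$-bimodule bound $\|axb\|_r\le\|a\|\,\|x\|_r\,\|b\|$ for $a,b\in\M$ (see \cite{Pisier}), which keeps $axb$ in $L^r(\M,\tau)$, while $axb\in\M$ is automatic. Since $\|\cdot\|$ and $\|\cdot\|_r$ are both finite-valued norms on $\mathcal{J}$, their pointwise maximum $\Phi(\cdot)$ is again a norm. Condition (ii) is immediate with $\lambda=1$, as $\Phi(x)\ge\|x\|$. For the unitary invariance (i), given unitaries $u,v\in\M$ one computes $|uxv|=v^*|x|v$, hence $|uxv|^r=v^*|x|^r v$ and $\tau(|uxv|^r)=\tau(|x|^r)$ by traciality, so $\|uxv\|_r=\|x\|_r$; combined with the unitary invariance of $\|\cdot\|$ this gives $\Phi(uxv)=\Phi(x)$. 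The inclusion $\F(\M,\tau)\subseteq\mathcal{J}$ in (iv) follows because for $x\in\F(\M,\tau)$ the support projection of $|x|$ equals $R(x^*)$, which is Murray--von Neumann equivalent to $R(x)$, so $\tau(|x|^r)\le\|x\|^r\tau(R(x^*))=\|x\|^r\tau(R(x))<\infty$.

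The main obstacle is the completeness required by (iii). Given a $\Phi(\cdot)$-Cauchy sequence $\{x_n\}\subseteq\mathcal{J}$, it is simultaneously $\|\cdot\|$-Cauchy and $\|\cdot\|_r$-Cauchy. By completeness of $\M$ in $\|\cdot\|$ there is $x\in\M$ with $\|x_n-x\|\to0$, and by completeness of $L^r(\M,\tau)$ there is $y\in L^r(\M,\tau)$ with $\|x_n-y\|_r\to0$. The delicate step is to identify $x$ with $y$: both $\|\cdot\|$-convergence and $\|\cdot\|_r$-convergence imply convergence in the measure topology on the $\tau$-measurable operators, and limits there are unique, so $x=y$ as measurable operators. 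Hence $x\in L^r(\M,\tau)\cap\M=\mathcal{J}$ and $\Phi(x_n-x)=\max\{\|x_n-x\|_r,\|x_n-x\|\}\to0$, proving that $(\mathcal{J},\Phi)$ is a Banach space.

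Finally I would treat the remaining inclusion in (iv) and minimality at once by truncation. For $x\in\mathcal{J}$ with polar decomposition $x=u|x|$ and $\delta>0$, set $e_\delta=\chi_{[\delta,\infty)}(|x|)$. The Chebyshev-type bound $\delta^r\tau(e_\delta)\le\tau(|x|^r)<\infty$ shows $e_\delta$ is finite, whence $xe_\delta\in\F(\M,\tau)$, since $R((xe_\delta)^*)\le e_\delta$ forces $\tau(R(xe_\delta))\le\tau(e_\delta)<\infty$. On the complementary spectral subspace $|x|(1-e_\delta)$ has norm at most $\delta$, so $\|x-xe_\delta\|\le\delta\to0$; this places $x$ in $\K(\M,\tau)$ and thus gives $\mathcal{J}\subseteq\K(\M,\tau)$, completing (iv). For minimality I would further note that $\tau(|x-xe_\delta|^r)=\tau(|x|^r(1-e_\delta))\to0$ as $\delta\downarrow0$, because $(1-e_\delta)$ decreases to the kernel projection of $|x|$, on which $|x|^r$ vanishes, while $\tau$ is normal and $|x|^r\in L^1(\M,\tau)$. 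Therefore $\Phi(x-xe_\delta)\to0$, so $\F(\M,\tau)$ is $\Phi(\cdot)$-dense in $\mathcal{J}$, that is, $\mathcal{K}_\Phi^0(\M,\tau)=\mathcal{J}=\mathcal{K}_\Phi(\M,\tau)$, which is precisely the claimed minimality.
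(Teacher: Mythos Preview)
Your proof is correct. The paper itself does not supply a proof of this lemma; it is simply cited as Lemma~2.1.6 of \cite{Li}, so there is no in-paper argument to compare against. Your verification of conditions (i)--(iv), the completeness argument via uniqueness of limits in the measure topology on $\tau$-measurable operators, and the spectral truncation $x\mapsto x\chi_{[\delta,\infty)}(|x|)$ to obtain both $\mathcal{J}\subseteq\K(\M,\tau)$ and minimality constitute the standard and expected route, and all steps are justified correctly.
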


\begin{definition}\label{prelim_def2} Let $1\le r<\infty$.  Define $\mathcal{K}_r(\mathcal{M},\tau)$ to be $L^r(\mathcal{M},\tau)\cap \mathcal{M}$ equipped with the norm $\Phi(\cdot)$ satisfying $\Phi(x) =\max \{ \|x\|_r,\|x\| \},$ for all $x\in L^r(\mathcal{M},\tau)\cap \mathcal{M}. $ Thus $\mathcal{K}_r(\mathcal{M},\tau)$ is a minimal normed ideal of $(\mathcal{M},\tau)$.
\end{definition}

In \cite{Li},  the authors proved a generalization of Kuroda's Theorem (Theorem of \cite{Kuroda}) in the setting of semifinite, properly infinite von Neumann algebras as follows.

\begin{definition}\label{diagonal-opt}
	Let $\M$ be a von Neumann algebra. An operator $d$ in $\mathcal{M}$ is  said to be diagonal if there exist a family $\{\lambda_n\}_{n=1}^\infty$ of complex numbers and a family $\{e_n\}_{n=1}^\infty$ of orthogonal projections in $\mathcal{M}$ satisfying $\sum^{\infty}_{n=1} e_n=I$ such that $d=\sum_{n=1}^\infty\lambda_ne_n$.
\end{definition}

\begin{theorem}[Theorem 3.2.2 of \cite{Li}]\label{Li-Shen-Shi-2}
	Let $\mathcal{M}$ be a countably decomposable, properly infinite von Neumann algebra with a faithful,  normal,  semifinite, tracial weight $\tau$ and let $\mathcal{K}_{\Phi}(\mathcal{M},\tau)$ be a normed ideal of $(\mathcal{M},\tau)$ {\rm (}see Definition $\ref{prelim_def1}${\rm)}. Assume that
	\begin{equation}
	\displaystyle
	\lim_{\substack{\tau(e)\rightarrow\infty\\ e\in \mathcal{P}\mathcal{F}(\mathcal{M},\tau)}}\frac{\Phi(e)}{\tau(e)} =0, \tag{\ref{not-equiv-to-trace-norm}}
	\end{equation}
	where $\P\F(\M,\tau)=\{e\in\F(\M,\tau):e=e^2=e^{\ast}\}$.
	Let $a\in \mathcal{M} $ be a self-adjoint element. Then for every $\epsilon>0$, there exists a diagonal operator $d$ in $\mathcal{M}$ such that
	\begin{enumerate}
		\item[(i)]\quad  $a-d\in{\mathcal{K}^{0}_{\Phi}(\mathcal{M},\tau)^{}}$; 
		\item[(ii)]\quad  $\Phi(a-d)\le\epsilon$. 
	\end{enumerate}
\end{theorem}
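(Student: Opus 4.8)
The plan is to prove this noncommutative Weyl--von Neumann theorem by reducing it to a single \emph{approximate reduction} lemma and then iterating. The lemma I would isolate is: for every $a=a^*\in\M$, every $p\in\P\F(\M,\tau)$, and every $\eta>0$, there is a projection $q\in\P\F(\M,\tau)$ with $p\le q$ and $\Phi(aq-qa)<\eta$. Granting it, one builds an increasing sequence of finite-trace projections almost commuting with $a$, block-diagonalizes $a$ against this sequence, and diagonalizes the finite blocks at negligible $\Phi(\cdot)$-cost. The condition $(\ref{not-equiv-to-trace-norm})$ enters \emph{only} in the lemma, and that is where I expect the real difficulty to lie.

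To prove the lemma, fix $\delta>0$ and partition $[-\|a\|,\|a\|]$ into $N$ subintervals of length $<\delta$, with $N\approx 2\|a\|/\delta$. Let $E_1,\dots,E_N$ be the associated spectral projections of $a$ and let $a_\delta=\sum_j\lambda_jE_j$ be the resulting step operator, so that $\|a-a_\delta\|\le\delta$. Writing $m=\tau(p)$, put $q_j=R(E_jp)\le E_j$; since $q_j$ is equivalent to a subprojection of $p$ we have $\tau(q_j)\le m$, and $q:=\sum_j q_j$ satisfies $p\le q$, $\tau(q)\le Nm$, and (crucially) $q$ commutes with every $E_j$, hence with $a_\delta$. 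Therefore $aq-qa=(a-a_\delta)q-q(a-a_\delta)$ has operator norm at most $2\delta$ and a range projection $e$ with $\tau(e)\le 2Nm$, so Lemma \ref{prelim_lemma1}(iv) gives
\begin{equation*}
\Phi(aq-qa)\le 2\delta\,\Phi(e),\qquad \tau(e)\le 2Nm\approx \frac{4m\|a\|}{\delta}.
\end{equation*}
Here is the crux. Rewriting $2\delta\,\Phi(e)=\bigl(2\delta\,\tau(e)\bigr)\cdot\Phi(e)/\tau(e)$ and noting that $2\delta\,\tau(e)\le 4\delta Nm$ stays bounded by roughly $8m\|a\|$, I would split on the size of $\tau(e)$: beyond a threshold $T_0$, hypothesis $(\ref{not-equiv-to-trace-norm})$ forces $\Phi(e)/\tau(e)$ to be uniformly small, making the product $<\eta$; for $\tau(e)\le T_0$, unitary invariance and monotonicity of $\Phi$ bound $\Phi(e)$ by a constant $h(T_0)$, so $2\delta\,\Phi(e)<\eta$ once $\delta$ is small. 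Choosing $T_0$ first and then $\delta$ small enough proves the lemma. Balancing $\delta\to0$ (which forces $\tau(e)\to\infty$) against the hypothesis $\Phi(e)/\tau(e)\to0$ is the entire point of condition $(\ref{not-equiv-to-trace-norm})$, and is the main obstacle of the proof.

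Next I iterate. Since $\M$ is countably decomposable and semifinite, fix $r_n\uparrow I$ in $\P\F(\M,\tau)$. Starting from $p_0=0$, at stage $n$ apply the lemma to $p_{n-1}\vee r_n$ with tolerance $\eta=\epsilon/2^{n+2}$ to obtain $p_n\ge p_{n-1}\vee r_n$ in $\P\F(\M,\tau)$ with $\Phi(ap_n-p_na)<\epsilon/2^{n+2}$. Then $p_n\uparrow I$ because $p_n\ge r_n$. Setting $q_n=p_n-p_{n-1}$, the coupling of the $n$-th block to its future satisfies $(I-p_n)aq_n=(I-p_n)(ap_n-p_na)q_n$, whence $\Phi\bigl((I-p_n)aq_n\bigr)\le\Phi(ap_n-p_na)<\epsilon/2^{n+2}$ by Lemma \ref{prelim_lemma1}(i).

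Finally I assemble. Write $a=D_0+R$ with $D_0=\sum_n q_naq_n$ the block-diagonal part and $R=\sum_{i\ne j}q_iaq_j$ the remainder. Its upper-triangular part is $\sum_n q_na(I-p_n)=\sum_n\bigl((I-p_n)aq_n\bigr)^*$; by the coupling bound $\sum_n\Phi\bigl((I-p_n)aq_n\bigr)<\infty$, so Lemma \ref{prelim_lemma1}(v) places this series in $\mathcal{K}_\Phi^0(\M,\tau)$ with $\Phi$-norm $<\epsilon/4$, and adding its adjoint gives $R\in\mathcal{K}_\Phi^0(\M,\tau)$ with $\Phi(R)<\epsilon/2$. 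For the diagonal part, each $q_naq_n$ is self-adjoint in the finite algebra $q_n\M q_n$, so a step function of its spectral projections produces $d_n=\sum_j\lambda_{n,j}f_{n,j}$ with $\{f_{n,j}\}_j$ orthogonal, $\sum_jf_{n,j}=q_n$, and $\|q_naq_n-d_n\|<\delta_n$; as the support lies under $q_n\in\P\F(\M,\tau)$, Lemma \ref{prelim_lemma1}(iv) gives $\Phi(q_naq_n-d_n)\le\delta_n\Phi(q_n)$, which we force below $\epsilon/2^{n+2}$ by choosing $\delta_n$ small. Then $d=\sum_{n,j}\lambda_{n,j}f_{n,j}$ is diagonal in the sense of Definition \ref{diagonal-opt}, since the $f_{n,j}$ are mutually orthogonal and sum to $I$, and $a-d=R+\sum_n(q_naq_n-d_n)$ lies in $\mathcal{K}_\Phi^0(\M,\tau)$ with $\Phi(a-d)<\epsilon$, once more by Lemma \ref{prelim_lemma1}(v). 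This yields both conclusions (i) and (ii).
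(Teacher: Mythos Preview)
The paper does not contain a proof of this statement: it is quoted verbatim as Theorem~3.2.2 of \cite{Li} and used as a black box in the preliminaries. There is therefore nothing in the present paper to compare your argument against.

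That said, your sketch is the standard Weyl--von Neumann machinery and is essentially correct. The isolation of the approximate-reduction lemma is exactly right, the construction $q=\sum_j R(E_jp)$ from the spectral step $a_\delta$ is clean, and the identity $aq-qa=(a-a_\delta)q-q(a-a_\delta)$ together with the balance between $\delta\to 0$ (forcing $\tau(e)$ large) and condition $(\ref{not-equiv-to-trace-norm})$ is the heart of the matter. The iteration against an exhausting sequence $r_n\uparrow I$ and the block-diagonal assembly via Lemma~\ref{prelim_lemma1}(v) are the correct bookkeeping devices, and your coupling estimate $(I-p_n)aq_n=(I-p_n)(ap_n-p_na)q_n$ is right.

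One point deserves tightening. In the branch ``$\tau(e)\le T_0$'' of your case split you assert a uniform bound $\Phi(e)\le h(T_0)$ from ``unitary invariance and monotonicity''. In a \emph{factor} this is immediate, since any two finite projections of the same trace are equivalent; this is exactly how Remark~\ref{Phi-norm-control} obtains $\Phi(f)\le m_0+1$ for $\tau(f)\le m_0$. But the theorem is stated for a general countably decomposable, properly infinite, semifinite $\M$, where two finite projections of equal trace need not be comparable, and your uniform bound is not automatic. For the applications in this paper the point is harmless, since Theorems~\ref{B(H)-case}, \ref{II_infty-case}, and \ref{main-thm} all take $\M$ to be a factor; for the stated generality you would need either a direct-integral reduction to factors or a more careful argument at this step.
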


\begin{remark}\label{Phi-norm-control}
	We make two comments about Theorem $\ref{Li-Shen-Shi-2}$.  
	\begin{enumerate}
		\item \quad The reader is referred to Lemma $3.1.1$ of \cite{Li} for a characterization related to $(\ref{not-equiv-to-trace-norm})$. Specially, in the case of $\B(\H)$, Lemma $1$ of \cite{Kuroda} implies that a unitarily invariant norm $\Phi(\cdot)$ satisfies $(\ref{not-equiv-to-trace-norm})$ if and only if $\Phi(\cdot)$ is \textbf{not} equivalent to the trace norm.
		\item \quad Actually, in Theorem $\ref{Li-Shen-Shi-2}$, the diagonal operator $d$ is in the form
	\begin{equation}\label{diag-opt}
		d=\sum\nolimits^{\infty}_{n=1}\lambda^{\prime}_{n}e^{\prime}_{n}
	\end{equation}
	where $\{\lambda^{\prime}_{n}\}_{n\geq 1}\subseteq[-\Vert a\Vert,\Vert a\Vert]$ and $\{e^{\prime}_{n}\}_{n\geq 1}$ is a sequence of pairwise mutually orthogonal projections in $\F(\M,\tau)$ with $\sum_{n\geq 1}e^{\prime}_{n}=I$.
	
	Note that, by $(\ref{not-equiv-to-trace-norm})$, there exists an integer $m_0\geq 1$ such that
	\begin{equation*}
		\frac{\Phi(e)}{\tau(e)}\leq 1 \quad \mbox{ for }\ e\in \mathcal{P}\mathcal{F}(\mathcal{M},\tau) \ \mbox{ and }\ \tau(e)\geq m_0.
	\end{equation*}
	Thus for $e\in \mathcal{P}\mathcal{F}(\mathcal{M},\tau)$ and $m_0 \leq \tau(e)\leq m_0+1$, we have $\Phi(e)\leq \tau(e)\leq m_0+1$. It follows from $(iii)$ of Lemma $\ref{prelim_lemma1}$ that the inequality $\Phi(f)\leq \Phi(e)\leq m_0+1$ holds for each subprojection $f\leq e$ with $\tau(f)\leq m_0$. 
	
	If $\M$ is a countably decomposable, properly infinite type ${\rm II}_{\infty}$ factor with a faithful,  normal,  semifinite, tracial weight $\tau$, then $(i)$ of Definition $\ref{prelim_def1}$ and the preceding arguments guarantee that
	\begin{equation*}
		\tau(f)\leq m_0 \quad \Rightarrow \quad \Phi(f) \leq m_0+1. 
	\end{equation*}
	In terms of the diagonal form in $(\ref{diag-opt})$, there exists a partition $\{e_{n}\}_{n\geq 1}$ of the identity $I$ finer than $\{e^{\prime}_{n}\}_{n\geq 1}$ with each $e_{n}$ in $\mathcal{P}\mathcal{F}(\mathcal{M},\tau)$ and $\tau(e_{n})<m_0$ such that 
	\begin{equation}\label{diag-opt-finer}
		d=\sum\nolimits^{\infty}_{n=1}\lambda_{n}e_{n} \quad \mbox{ and } \quad \Phi(e_{n}) \leq m_0+1 \ \mbox{ for } \ n\geq 1, 
	\end{equation}
	where $\{\lambda_{n}\}_{n\geq 1}\subseteq[-\Vert a\Vert,\Vert a\Vert]$.
	\end{enumerate}
\end{remark}

By Remark \ref{Phi-norm-control}, we reformulate Theorem \ref{Li-Shen-Shi-2} in the setting of countably decomposable, properly infinite, semifinite von Neumann algebras as follows.

\begin{theorem}\label{Li-Shen-Shi-3}
	Let $\mathcal{M}$ be a countably decomposable, properly infinite von Neumann algebra with a faithful,  normal,  semifinite, tracial weight $\tau$ and let $\mathcal{K}_{\Phi}(\mathcal{M},\tau)$ be a normed ideal of $(\mathcal{M},\tau)$ {\rm (}see Definition $\ref{prelim_def1}${\rm)}. Assume that
	\begin{equation*}
	\displaystyle
	\lim_{\substack{\tau(e)\rightarrow\infty\\ e\in \mathcal{P}\mathcal{F}(\mathcal{M},\tau)}}\frac{\Phi(e)}{\tau(e)} =0. 
	\end{equation*}
	Let $a\in \mathcal{M} $ be a self-adjoint element. Then there exists an integer $m_0\geq 1$, and for every $\epsilon>0$, there exists a diagonal operator $d=\sum\nolimits^{\infty}_{n=1}\lambda_{n}e_{n}\in\mathcal{M}$ as in the form of $(\ref{diag-opt-finer})$ such that
	\begin{enumerate}
		\item[(i)]\quad  $ a-d\in{\mathcal{K}^{0}_{\Phi}(\mathcal{M},\tau)}$ \ and \ $\Phi(e_{n}) \leq m_0+1$ for all $n\geq 1$; 
		\item[(ii)]\quad  $\Phi(a-d)\le\epsilon$ \ and \ $\Vert d \Vert \leq \Vert a \Vert$. 
	\end{enumerate}
\end{theorem}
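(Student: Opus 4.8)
The plan is to read off Theorem~\ref{Li-Shen-Shi-3} from Theorem~\ref{Li-Shen-Shi-2} together with the computations already packaged in Remark~\ref{Phi-norm-control}: the statement is essentially a reformulation, so no fresh analytic estimate is needed, only a careful choice of the constant $m_0$ and a refinement of the partition of the identity attached to the diagonal operator produced by Theorem~\ref{Li-Shen-Shi-2}.

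First I would fix $m_0$ using only the hypothesis~(\ref{not-equiv-to-trace-norm}) and the ambient data $(\mathcal{M},\tau,\Phi)$, in particular independently of $a$ and $\epsilon$. Since $\Phi(e)/\tau(e)\to 0$ as $\tau(e)\to\infty$ along $e\in\mathcal{P}\mathcal{F}(\mathcal{M},\tau)$, there is an integer $m_0\ge 1$ with $\Phi(e)\le\tau(e)$ whenever $e\in\mathcal{P}\mathcal{F}(\mathcal{M},\tau)$ and $\tau(e)\ge m_0$. For the cases this paper actually uses, namely $(\mathcal{M},\tau)=(\mathcal{B}(\mathcal{H}),\operatorname{Tr})$ and $(\mathcal{M},\tau)$ a type $\mathrm{II}_\infty$ factor, any $f\in\mathcal{P}\mathcal{F}(\mathcal{M},\tau)$ with $\tau(f)\le m_0$ is majorized by some $e\in\mathcal{P}\mathcal{F}(\mathcal{M},\tau)$ with $m_0\le\tau(e)\le m_0+1$ (in $\mathcal{B}(\mathcal{H})$ one enlarges $f$ by finitely many rank-one projections; in a type $\mathrm{II}_\infty$ factor one uses that $\tau$ assumes every value in $[0,\infty]$ on subprojections of $1-f$), and then part~(iii) of Lemma~\ref{prelim_lemma1} gives $\Phi(f)\le\Phi(e)\le\tau(e)\le m_0+1$. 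Thus $\tau(f)\le m_0\ \Rightarrow\ \Phi(f)\le m_0+1$ for all $f\in\mathcal{P}\mathcal{F}(\mathcal{M},\tau)$, which is the uniform bound we will want on each $e_n$.

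Then, given $\epsilon>0$, I would invoke Theorem~\ref{Li-Shen-Shi-2} on the self-adjoint element $a$ to produce a diagonal $d$ with $a-d\in\mathcal{K}^{0}_{\Phi}(\mathcal{M},\tau)$ and $\Phi(a-d)\le\epsilon$; by Remark~\ref{Phi-norm-control}(2) this $d$ already has the shape $d=\sum_{n\ge 1}\lambda'_n e'_n$ with $\{\lambda'_n\}\subseteq[-\|a\|,\|a\|]$ and $\{e'_n\}$ a sequence of pairwise orthogonal projections in $\mathcal{P}\mathcal{F}(\mathcal{M},\tau)$ summing to $I$, whence $\|d\|\le\sup_n|\lambda'_n|\le\|a\|$. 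Each $e'_n$ has finite $\tau$-trace, hence decomposes into a finite orthogonal sum of projections in $\mathcal{P}\mathcal{F}(\mathcal{M},\tau)$ each of $\tau$-trace strictly below $m_0$ (by continuity of the trace in the type $\mathrm{II}$ part, and by peeling off minimal projections — after enlarging $m_0$ at the outset if necessary — in the type $\mathrm{I}$ part). Collecting all of these subprojections into a single sequence $\{e_n\}_{n\ge 1}$ with $\sum_n e_n=I$, and setting $\lambda_n:=\lambda'_k$ whenever $e_n\le e'_k$, one obtains $d=\sum_{n\ge 1}\lambda_n e_n$ in exactly the form~(\ref{diag-opt-finer}): $\{\lambda_n\}\subseteq[-\|a\|,\|a\|]$, $\tau(e_n)<m_0$, and $\Phi(e_n)\le m_0+1$ by the first step. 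Since this regrouping altered neither $d$ itself nor $a-d$, $\Phi(a-d)$, or $\|d\|$, conclusions (i) and (ii) follow, with $m_0$ chosen independently of $a$ and $\epsilon$.

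I expect the only load-bearing point to be the uniform estimate $\Phi(e_n)\le m_0+1$. This is exactly where hypothesis~(\ref{not-equiv-to-trace-norm}) is used — mere $\|\cdot\|$-domination of $\Phi(\cdot)$ would give no such bound — and it is the reason one cannot work directly with the partition handed back by Theorem~\ref{Li-Shen-Shi-2}, but must subdivide it into pieces of $\tau$-trace below the fixed threshold $m_0$. Everything else is routine bookkeeping, once the explicit form of $d$ recorded in Remark~\ref{Phi-norm-control}(2) is taken for granted.
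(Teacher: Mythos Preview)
Your proposal is correct and follows the paper's approach: the paper presents Theorem~\ref{Li-Shen-Shi-3} as a direct reformulation of Theorem~\ref{Li-Shen-Shi-2} via the computations in Remark~\ref{Phi-norm-control}, exactly as you outline (choose $m_0$ from~(\ref{not-equiv-to-trace-norm}), then refine the partition supplied by Theorem~\ref{Li-Shen-Shi-2} into pieces of trace below $m_0$). Your treatment is in fact slightly more careful than the paper's, since you explicitly address both the $\mathcal{B}(\mathcal{H})$ and the type~$\mathrm{II}_\infty$ factor cases, whereas Remark~\ref{Phi-norm-control} spells out only the latter.
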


\subsection{Irrational rotation algebras $\A_{\theta}$ and hyperfinite type ${\rm II}_{1}$ factors}

\ \newline

The class of irrational rotation algebras $\mathcal{A}_{\theta}$ have been studied a lot in recent years. Let $\theta$ be an irrational number, the irrational rotation algebra $\mathcal{A}_{\theta}$ is the universal ${\rm C}^{\ast}$-algebra generated by two unitary elements $u$ and $v$ satisfying
\begin{equation*}
	uv=e^{2\pi i\theta}vu.
\end{equation*}
Since $\mathcal{A}_{\theta}$ can also be viewed as a crossed product ${\rm C}^{\ast}$-algebra, an application of Theorem 1 of \cite{Rosenberg} entails that $\mathcal{A}_{\theta}$ is amenable. Moreover, Elliott and Evans \cite{Elliott} proved that $\mathcal{A}_{\theta}$ is a limit circle algebra.

It is well known that $\mathcal{A}_{\theta}$ is simple and there exists only one faithful tracial state $\tau$ on $\mathcal{A}_{\theta}$. The reader is referred to Chapter VI of \cite{Davidson} for more details. By virtue of the GNS construction, the tracial state $\tau$ induces a $\ast$-representation $\pi$ of $\mathcal{A}_{\theta}$ on $L^{2}(\mathcal{A}_{\theta},\tau)$. It is easy to verify that $\pi$ is a unital $\ast$-isomorphism. Thus, $\pi(\mathcal{A}_{\theta})$ is also amenable. By applying Corollary $2$ of \cite{Connes}, $\pi(\mathcal{A}_{\theta})$ is nuclear. It follows that $\pi(\mathcal{A}_{\theta})^{\prime\prime}$ is injective in terms of Theorem IV.$2.2.13$ and Theorem IV.$3.1.12$ of \cite{Blackadar}. Note that the tracial state $\tau$ induces a tracial vector state on $\pi(\mathcal{A}_{\theta})^{\prime\prime}$. The following facts are useful:
\begin{enumerate}
	\item\quad  $\tau(u^{n})=\tau(v^{n})=0$ for each non-zero integer $n$;
	\item\quad  the set $\{u^{m}v^{n}:m,n\in\mathbb{Z}\}$ forms an orthonormal basis of $L^{2}(\mathcal{A}_{\theta},\tau)$.
\end{enumerate}
By the above facts, it can be verified that $\pi(\mathcal{A}_{\theta})^{\prime\prime}$ is an injective type ${\rm II}_{1}$ factor. Thus, Theorem $6$ of \cite{Connes2} entails that $\pi(\mathcal{A}_{\theta})^{\prime\prime}$ is hyperfinite.

\begin{lemma}\label{irreducible-opt}
	Let $a\in\pi(\mathcal{A}_{\theta})^{\prime\prime}$ be a self-adjoint operator such that
	\begin{equation*}
		\{a\}^{\prime\prime}=\{\pi(u),\pi(u)^{\ast}\}^{\prime\prime}. 
	\end{equation*} 
	If $b$ is a non-scalar self-adjoint operator in $\{\pi(v),\pi(v)^{\ast}\}^{\prime\prime}$, then $a+ib$ is an irreducible operator in the von Neumann algebra $\pi(\mathcal{A}_{\theta})^{\prime\prime}$.
\end{lemma}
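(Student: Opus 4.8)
The plan is to show that if a projection $p \in \pi(\A_\theta)''$ commutes with $a+ib$, then $p$ is trivial. Since $a$ and $b$ are self-adjoint, taking adjoints shows that $p$ commutes with $a-ib$ as well, hence $p$ commutes with both $a$ and $b$ separately. Therefore $p \in \{a\}' \cap \{b\}'$. By hypothesis $\{a\}'' = \{\pi(u),\pi(u)^*\}''$, so $p \in \{a\}' = \{\pi(u),\pi(u)^*\}'$; and since $b \in \{\pi(v),\pi(v)^*\}''$ is self-adjoint and non-scalar, $\{b\}'' $ is a von Neumann subalgebra of $\{\pi(v),\pi(v)^*\}''$. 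The key point I would establish is that $\{b\}'' = \{\pi(v),\pi(v)^*\}''$: since $\pi(v)$ is a Haar unitary in the II$_1$ factor, $\{\pi(v),\pi(v)^*\}''$ is a \emph{maximal abelian} subalgebra (a masa) of $\pi(\A_\theta)''$, and an abelian von Neumann algebra with separable predual is singly generated by a self-adjoint element; but I must verify that the \emph{particular} non-scalar self-adjoint $b$ generates all of it. In fact I only need the weaker conclusion that $p \in \{b\}' \subseteq \{\pi(v),\pi(v)^*\}''$, because $\{\pi(v),\pi(v)^*\}''$ being a masa means its commutant in $\pi(\A_\theta)''$ equals itself, so I do not actually need $b$ to generate the masa — I only need $p \in \{b\}'$, and combined with $p$ commuting with $\pi(v)$? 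No: let me be careful. The clean route: $p$ commutes with $a$, and $\{a\}'' = \{\pi(u),\pi(u)^*\}''$ forces $p\pi(u) = \pi(u)p$. Separately $p$ commutes with $b$.

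So the real work is: from $p \in \{\pi(u),\pi(u)^*\}' \cap \{b\}'$ with $b \in \{\pi(v),\pi(v)^*\}''$ non-scalar self-adjoint, deduce $p \in \CCC I$. Here I would use the Fourier expansion on $L^2(\A_\theta,\tau)$ with orthonormal basis $\{u^m v^n\}$. Write $b = \sum_n \beta_n \pi(v)^n$ with $\beta_n = \overline{\beta_{-n}}$ and not all of $\beta_n$ for $n \neq 0$ vanishing (non-scalar). The commutation relation $\pi(u)\pi(v) = e^{2\pi i\theta}\pi(v)\pi(u)$ shows that conjugation by $\pi(u)$ acts on $\{\pi(v),\pi(v)^*\}''$ by the rotation $\pi(v)^n \mapsto e^{2\pi in\theta}\pi(v)^n$. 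Now let $p = \sum_{m,n} c_{m,n} \pi(u)^m \pi(v)^n$ (as an $L^2$-vector via its defining element in the II$_1$ factor). The condition $p\pi(u) = \pi(u)p$ together with $\tau(u^n) = 0$ pins down the support of the coefficients; and $pb = bp$ gives a system of linear relations among the $c_{m,n}$ and $\beta_n$. Because $\theta$ is irrational, $\{e^{2\pi in\theta} : n \in \ZZZ\}$ is infinite, and the irrationality should force the off-diagonal coefficients of $p$ to vanish, leaving $p \in \{\pi(v),\pi(v)^*\}''$; then commuting with $b$ and using that $b$ is non-scalar inside the \emph{masa} $\{\pi(v),\pi(v)^*\}''$ — wait, that alone doesn't give scalar. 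But commuting with $\pi(u)$ forces $p$, if it lies in $\{\pi(v),\pi(v)^*\}''$, to be invariant under the irrational rotation, hence constant, hence scalar. That is the crux.

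Concretely, I would organize it as follows. \textbf{Step 1:} Reduce to $p$ a projection commuting with $\pi(u)$, $\pi(u)^*$ and with $b$. \textbf{Step 2:} Show $p \in \{\pi(v),\pi(v)^*\}''$. For this, note $p$ commutes with $b = \sum \beta_n \pi(v)^n$; I expect the cleanest argument is: since $\pi(u)$ implements an ergodic automorphism $\alpha$ of the masa $\A := \{\pi(v),\pi(v)^*\}''$ (rotation by the irrational $\theta$ on the circle under the identification $\A \cong L^\infty(\mathbb{T})$), and $b$ corresponds to a non-constant function — hmm, but $b$ need not generate $\A$. Alternative and safer: use that $\A$ is a masa in $\M := \pi(\A_\theta)''$, so $\A' \cap \M = \A$; thus it suffices to show $p$ commutes with every element of $\A$. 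We know $p$ commutes with $b$; to upgrade to all of $\A$, observe $W^*(b) \subseteq \A$ and consider $\A \ominus W^*(b)$ — this needs $W^*(b) = \A$, which holds iff $b$, as a function on $\mathbb{T}$, separates points a.e., i.e.\ is "non-degenerate." So this route forces an extra hypothesis the lemma doesn't state. Hence I will instead argue directly with the Fourier coefficients $c_{m,n}$ of $p$: commuting with $\pi(u)$ gives (using $u^m v^n \mapsto e^{2\pi i m n\theta} u^m v^n$ under $\mathrm{Ad}\,\pi(v)$... rather $\mathrm{Ad}\,\pi(u)$) a diagonal constraint, and commuting with $b$ gives a convolution-type constraint; the combination with the irrationality of $\theta$ kills all $c_{m,n}$ with $m \neq 0$. \textbf{Step 3:} Now $p \in \A$ commutes with $\pi(u)$, so $p$ is fixed by the ergodic rotation $\alpha$, hence $p$ is a constant function, i.e.\ $p \in \CCC I$. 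The \textbf{main obstacle} is Step 2: making rigorous that commuting with the single non-scalar self-adjoint $b$ (not assumed to generate $\A$) still forces $p \in \A$; I expect this to require exploiting the grading of $\M$ over $\ZZZ^2$ and the irrationality of $\theta$ to separate the "$\pi(u)$-degree" of $p$, rather than any property of $b$ beyond self-adjointness — indeed once $p \in \A$, self-adjointness and non-scalarness of $b$ are not even needed again, only that $\pi(u)$ acts ergodically. I would write Step 2 as: decompose $p = \sum_{m \in \ZZZ} p_m$ where $p_m \in \A\,\pi(u)^m$ (the spectral components for the dual action of $\mathbb{T}$ on the crossed product); $p = p^*$ and $p$ commuting with $b \in \A$ forces each $p_m$ to commute with $b$ and $p_m^* = p_{-m}$, and then the projection equation plus irrationality forces $p_m = 0$ for $m \neq 0$.
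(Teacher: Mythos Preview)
Your reduction to a projection $p$ commuting with both $a$ and $b$ is correct, and your instinct to use the Fourier basis $\{u^m v^n\}$ together with the irrationality of $\theta$ is exactly what the paper does. However, you have created your own ``main obstacle'' by aiming at the wrong masa in Step~2.

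The paper's observation that you are missing is this: $\{\pi(u),\pi(u)^*\}''$ is a \emph{masa} in $\pi(\A_\theta)''$, so its relative commutant in $\pi(\A_\theta)''$ is itself. Since $a$ generates this masa and $p$ commutes with $a$, you get immediately $p\in\{\pi(u),\pi(u)^*\}''$. There is nothing to prove here --- no crossed-product decomposition, no projection equation, no analysis of $b$ is needed for this step. Your Step~2 (showing $p\in\{\pi(v),\pi(v)^*\}''$) is simply the wrong target; once you switch to the $u$-masa, the step is trivial.

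Now write $p=\sum_m c_m\,\pi(u)^m$ and $b=\sum_n \beta_n\,\pi(v)^n$ as $L^2$-vectors (both series converge in $L^2(\A_\theta,\tau)$). Using $u^m v^n = e^{2\pi i m n\theta} v^n u^m$ and comparing coefficients of $pb$ and $bp$ against the orthonormal basis $\{u^m v^n\}$ gives $c_m\beta_n(1-e^{-2\pi i m n\theta})=0$ for all $m,n$. Since $b$ is non-scalar, some $\beta_{n_0}\neq 0$ with $n_0\neq 0$; irrationality of $\theta$ then forces $c_m=0$ for every $m\neq 0$, so $p$ is scalar. This is the entire argument in the paper, and it is one paragraph.

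Your crossed-product decomposition $p=\sum_m p_m$ with $p_m\in\A\,\pi(u)^m$ would also work, but note that commutation with $\pi(u)$ forces each $p_m=f_m(\pi(v))\pi(u)^m$ to satisfy $f_m=\alpha(f_m)$ under the irrational rotation $\alpha$, hence each $f_m$ is constant by ergodicity --- which again lands you in $\{\pi(u)\}''$, not $\{\pi(v)\}''$. So even your own machinery points to the $u$-masa. The non-scalarness of $b$ is used only at the very end, and its self-adjointness is used only to split $a+ib$ into real and imaginary parts at the start.
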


\begin{proof}
	Note that $\{\pi(u),\pi(u)^{\ast}\}^{\prime\prime}$ and $\{\pi(v),\pi(v)^{\ast}\}^{\prime\prime}$ are both masas (short for maximal abelian self-adjoint algebra) in $\pi(\mathcal{A}_{\theta})^{\prime\prime}$. Let $p$ be a projection in $\pi(\mathcal{A}_{\theta})^{\prime\prime}$ such that $pa=ap$ and $pb=bp$. Since $a$ generates $\{\pi(u),\pi(u)^{\ast}\}^{\prime\prime}$, we have that $p$ belongs to $\{\pi(u),\pi(u)^{\ast}\}^{\prime\prime}$. Note that $p$ and $b$ can be also viewed as vectors in $L^{2}(\mathcal{A}_{\theta},\tau)$. It follows that $p$ can be expressed as a Laurent series of $u$ and $b$ can be expressed as a Laurent series of $v$. That $pb=bp$ guarantees that $p$ is trivial, since $\{u^{m}v^{n}: m, n\in\mathbb{Z}\}$ is an orthonormal basis of $L^{2}(\mathcal{A}_{\theta},\tau)$. Thus, $a+ib$ is irreducible in $\pi(\mathcal{A}_{\theta})^{\prime\prime}$.
\end{proof}

The following lemma is a special case of Corollary 4.1 of \cite{Pop} proved by Popa, which is useful in the proof of our main result.

\begin{lemma}\label{Popa}
Every type ${\rm II}_1$ factor $\M$ with separable predual contains an irreducible, hyperfinite subfactor $\N$, i.e., $\N^{\prime}\cap\M=\mathbb{C}$. Furthermore, there is a maximal abelian $\ast$-subalgebra of $\M$ wihch is regular in $\N$. 
\end{lemma}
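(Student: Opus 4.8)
\emph{Proof strategy.} The plan is to read the lemma off Corollary~4.1 of \cite{Pop} (see also \cite{Popa2}), taking the case in which no extra structure is prescribed on $\M$: for an arbitrary separable type ${\rm II}_{1}$ factor $\M$ that corollary produces, simultaneously, a hyperfinite subfactor $\N\subseteq\M$ with $\N'\cap\M=\mathbb{C}$ together with a $*$-subalgebra $\A\subseteq\N$ that is maximal abelian in $\M$ and regular in $\N$. Since our statement is precisely this special case, it suffices to invoke it; for orientation I record below the shape of the construction and the point where the difficulty concentrates.

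First one fixes a trace-norm dense sequence $\{x_{k}\}_{k\geq1}$ in $\M$ and builds, by induction on $k$, an increasing chain of finite-dimensional $*$-subalgebras $\N_{1}\subseteq\N_{2}\subseteq\cdots\subseteq\M$, where each $\N_{k}$ is a matrix algebra over a finite-dimensional abelian subalgebra $\A_{k}$, with $\A_{1}\subseteq\A_{2}\subseteq\cdots$, arranged so that a system of matrix units of $\N_{k+1}$ normalizes $\A_{k}$. Letting $\N$ and $\A$ be the von Neumann subalgebras of $\M$ generated by $\bigcup_{k}\N_{k}$ and by $\bigcup_{k}\A_{k}$ respectively, $\N$ is hyperfinite and $\A$ is regular in $\N$; the additional conditions to be secured along the way are (a) $\N'\cap\M=\mathbb{C}$ and (b) $\A'\cap\M=\A$. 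Condition (b) makes $\A$ a masa of $\M$ and, a fortiori, of $\N$, so $\A$ is then a Cartan subalgebra of $\N$ which is simultaneously a masa of $\M$, exactly as asserted.

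To secure (a) one uses that in a type ${\rm II}_{1}$ factor, given a finite set $F\subseteq\M$ and $\delta>0$, there is a matrix subalgebra $Q\cong M_{n}(\mathbb{C})$, in suitably general position, for which $\Vert E_{Q'\cap\M}(x)-\tau(x)I\Vert_{2}<\delta$ for every $x\in F$; arranging $\N_{k+1}$ to contain such a $Q$ with $F=\{x_{1},\dots,x_{k}\}$ and $\delta=2^{-k}$, and summing the errors, forces $\bigcap_{k}(\N_{k}'\cap\M)=\mathbb{C}$, i.e.\ $\N'\cap\M=\mathbb{C}$. Condition (b) is handled by a parallel diagonalization, enlarging the abelian base $\A_{k}$ at each stage so that in the limit nothing outside $\A$ commutes with all of $\A$. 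The hard part --- and the reason for simply citing \cite{Pop} --- is that these two demands pull against each other: (a) wants the matrix units of the $\N_{k}$ spread out in $\M$ as freely as possible, whereas (b) pins down the abelian subalgebras those units must sit over, and executing both within a single convergent induction is exactly what Popa's quantitative perturbation estimates for subalgebras of a ${\rm II}_{1}$ factor supply. Granting Corollary~4.1 of \cite{Pop}, the lemma follows.
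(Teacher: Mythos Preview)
Your approach is correct and matches the paper's own treatment exactly: the paper states this lemma without proof, noting that it ``is a special case of Corollary~4.1 of \cite{Pop} proved by Popa,'' which is precisely what you invoke. Your added outline of Popa's construction is a helpful bonus but not part of the paper's argument.
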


Note that in the hyperfinite type ${\rm II}_{1}$ factor $\pi(\mathcal{A}_{\theta})^{\prime\prime}$, $\{\pi(u),\pi(u)^{\ast}\}^{\prime\prime}$ and $\{\pi(v),\pi(v)^{\ast}\}^{\prime\prime}$ are both Cartan masas. The following result is proved by Connes, Feldman, Weiss \cite{Connes2}, and Popa \cite{Popa2} separately.

\begin{lemma}[Theorem 4.1 of \cite{Popa2}]\label{Popa2}
	If $\A_1$ and $\A_2$ are Cartan subalgebras of the hyperfinite type ${\rm II}_1$ factor $\R$, then there exists a normal  $\ast$-automorphism $\pi\in \operatorname{Aut}(\R)$ such that $\pi(\A_1)=\A_2$.
\end{lemma}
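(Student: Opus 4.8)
The plan is to translate the statement into the language of measured equivalence relations via the Feldman--Moore correspondence, apply the classification of amenable equivalence relations, and transport the resulting isomorphism back to an automorphism of $\R$. Recall that a Cartan subalgebra is a maximal abelian $\ast$-subalgebra $\A\subseteq\R$ whose normalizer in $\R$ generates $\R$. First I would attach to each Cartan subalgebra $\A_i\subseteq\R$ ($i=1,2$) its Feldman--Moore data: a countable standard measured equivalence relation $\mathcal{E}_i$ on a standard probability space $(X,\mu)$ together with a normalized $\mathbb{T}$-valued $2$-cocycle $\sigma_i$, and an isomorphism of pairs $(\R,\A_i)\simeq(W^*(\mathcal{E}_i,\sigma_i),\,L^\infty(X,\mu))$. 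Because $\R$ is a factor, each $\mathcal{E}_i$ is ergodic; because $\R$ is finite, the trace on $\R$ restricts on $\A_i\cong L^\infty(X,\mu)$ to an $\mathcal{E}_i$-invariant measure, so each $\mathcal{E}_i$ is of type ${\rm II}_1$.

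Next I would use the hyperfiniteness of $\R$ twice. Since $\R$ is the hyperfinite ${\rm II}_1$ factor it is injective, and injectivity of $W^*(\mathcal{E}_i,\sigma_i)$ forces $\mathcal{E}_i$ to be amenable in the sense of Zimmer. Amenability has two consequences I would exploit: the second cohomology of $\mathcal{E}_i$ with coefficients in $\mathbb{T}$ vanishes, so $\sigma_i$ is a coboundary and can be removed, identifying $(\R,\A_i)\simeq(W^*(\mathcal{E}_i),\,L^\infty(X,\mu))$ with the untwisted groupoid von Neumann algebra; and, by the Connes--Feldman--Weiss theorem, $\mathcal{E}_i$ is hyperfinite, hence by Dye's theorem is the orbit relation of a single ergodic measure-preserving transformation of $(X,\mu)$. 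Since any two ergodic measure-preserving transformations of a standard probability space are orbit equivalent, $\mathcal{E}_1$ and $\mathcal{E}_2$ are isomorphic as measured equivalence relations.

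Finally, an isomorphism $\Theta\colon\mathcal{E}_1\to\mathcal{E}_2$ covering a measure-space automorphism of $(X,\mu)$ induces, by functoriality of the groupoid von Neumann algebra construction, an isomorphism $W^*(\mathcal{E}_1)\to W^*(\mathcal{E}_2)$ that carries $L^\infty(X,\mu)$ onto $L^\infty(X,\mu)$; composing with the identifications from the previous steps yields the desired normal $\ast$-automorphism $\pi\in\operatorname{Aut}(\R)$ with $\pi(\A_1)=\A_2$. The main obstacle is the Connes--Feldman--Weiss theorem that every amenable measured equivalence relation is hyperfinite, which is the deep ingredient; the vanishing of $H^2(\mathcal{E}_i,\mathbb{T})$ needed to discard the cocycles is softer but still relies on the amenable structure theory. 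An alternative, more operator-algebraic route, due to Popa, constructs $\pi$ directly as a strong-operator limit of an iteration of approximate conjugacies, using at each stage the hyperfiniteness of $\R$ to match finite-dimensional approximants of $\A_1$ and $\A_2$ with compatible Cartan-type decompositions and a local perturbation lemma for masas; there the difficulty is to control the accumulating unitary perturbations so that the infinite product converges and conjugates $\A_1$ exactly onto $\A_2$.
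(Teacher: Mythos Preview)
The paper does not give a proof of this lemma at all: it is stated as a citation, attributed to Connes--Feldman--Weiss \cite{Connes2} and to Popa \cite{Popa2} separately, and used as a black box. So there is no ``paper's own proof'' to compare against.

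That said, your proposal is an accurate high-level outline of the two known routes to this result, and you correctly identify which is which. The Feldman--Moore/Connes--Feldman--Weiss approach you describe first is essentially the argument behind \cite{Connes2}: pass to the associated measured equivalence relation, use amenability to kill the $2$-cocycle and to reduce to a single ergodic transformation, then invoke Dye's orbit equivalence theorem. The alternative you mention at the end --- building the automorphism as a limit of approximate conjugacies via an intertwining argument inside $\R$ --- is precisely Popa's direct approach in \cite{Popa2}. Both are correct strategies; neither is elementary, and in each case the hard step is exactly where you locate it (CFW in the first, convergence of the iterative perturbation in the second). For the purposes of this paper only the statement is needed, so either sketch would be more than sufficient background.
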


\section{Main results}

In this section, we will prove an extended Halmos' theorem in semifinite factors with separable predual. For this purpose, we deal with factors of type ${\rm I}_{\infty}$  and type ${\rm II}_{\infty}$ separately. This is because type ${\rm II}_{\infty}$ factors contain no minimal projections while minimal projections play an important role in the study of factors of type ${\rm I}_{\infty}$. It follows that the proofs in these two cases are different in details. Recall that we always assume that $\mathcal{H}$ is a complex separable Hilbert space and we let $\B(\H)$ denote the set of bounded linear operators on $\mathcal{H}$.  

Let $(\M,\tau)$ be a semifinite von Neumann factor with a faithful, normal, semifinite, tracial weight $\tau$. Recall that $\displaystyle \mathcal{F}(\mathcal{M},\tau)=\{x\in\M: \tau(R(x))<\infty\}$. Every operator in $\mathcal{F}(\mathcal{M},\tau)$ is said to be of $(\mathcal{M},\tau)$-\emph{finite-rank} in this paper. When no confusion can arise, we just call $x$ a \emph{finite-rank} operator in $(\mathcal{M},\tau)$, for every $x$ in $\mathcal{F}(\mathcal{M},\tau)$. This coincides with the definition of finite rank operators in the setting of $\B(\H)$.

\subsection{Case 1: $\Phi(\cdot)$-norm-density of irreducible operators in Type ${\rm I}_{\infty}$ factors}

\ \newline

In $\B(\H)$, with Kuroda's theorem in \cite{Kuroda}, we can extend Halmos' theorem with respect to each unitary invariant norm \textbf{not} equivalent to the trace norm. For completeness, we sketch its proof in this subsection.

\begin{lemma}\label{compact-perturbation}
	Let $(\M,\tau)$ be a countably decomposable, semifinite, properly infinite von Neumann factor with a faithful, normal, semifinite, tracial weight $\tau$. Let $\mathcal{K}_{\Phi}(\mathcal{M},\tau)$ be a normed ideal of $(\mathcal{M},\tau)$ equipped with a norm $\Phi(\cdot)$ defined as in Definition $\ref{prelim_def1}$. 
	
	For $\epsilon>0$, let $\{\alpha_{i}\}_{i\geq 1}$ be a subsequence of $\{\epsilon/2^{k}\}_{k\geq 1}$. If $\{e_{i}\}_{i\geq 1}$ is a sequence of mutually orthogonal, finite-rank projections in $\M$ such that
	\begin{enumerate}
		\item \quad $\sum_{i\geq 1}e_{i}=I$;
		\item \quad $\Phi(e_{i})\leq \lambda_{1}$ for some $\lambda_{1}>0$ and all $i\geq 1$,
	\end{enumerate}  
	then it follows that
	\begin{equation*}
		\sum\nolimits_{i\geq 1}{\alpha}_{i}e_i\in{\mathcal{K}^{0}_{\Phi}(\mathcal{M},\tau)^{}}\quad \mbox{and}\quad 
		\Phi(\sum\nolimits_{i\geq 1}{\alpha}_{i}e_i)\le\epsilon\cdot\lambda_{1}. 
	\end{equation*} 
\end{lemma}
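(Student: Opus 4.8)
The plan is to realize $\sum_{i\ge 1}\alpha_i e_i$ as an absolutely $\Phi$-convergent series of operators in $\mathcal{F}(\mathcal{M},\tau)$ and then invoke part (v) of Lemma \ref{prelim_lemma1} to conclude both that the sum lies in $\mathcal{K}^0_\Phi(\mathcal{M},\tau)$ and that it satisfies the claimed $\Phi$-norm bound. First I would set $x_i=\alpha_i e_i$. Each $e_i$ is a finite-rank projection, so $e_i\in\mathcal{F}(\mathcal{M},\tau)\subseteq\mathcal{K}^0_\Phi(\mathcal{M},\tau)$, and hence $x_i\in\mathcal{K}^0_\Phi(\mathcal{M},\tau)$ for every $i$.

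Next I would verify the two hypotheses of Lemma \ref{prelim_lemma1}(v) for the sequence $\{x_i\}$. For (1), since the $e_i$ are mutually orthogonal with $\sum_{i\ge1}e_i=I$ (convergence in the strong, hence weak$^\ast$, operator topology) and the scalars $\alpha_i$ are bounded, the partial sums $\sum_{i=1}^k\alpha_i e_i$ form a bounded increasing-support net that converges weak$^\ast$ to some $x\in\mathcal{M}$ (concretely $x=\sum_i\alpha_i e_i$, a diagonal operator with $\Vert x\Vert\le\sup_i\alpha_i\le\epsilon$). For (2), I use $\Phi(x_i)=\Phi(\alpha_i e_i)=\alpha_i\Phi(e_i)\le\alpha_i\lambda_1$; since $\{\alpha_i\}_{i\ge1}$ is a subsequence of $\{\epsilon/2^k\}_{k\ge1}$ we get
\begin{equation*}
\sum_{i\ge1}\Phi(x_i)\le\lambda_1\sum_{i\ge1}\alpha_i\le\lambda_1\sum_{k\ge1}\frac{\epsilon}{2^k}=\epsilon\lambda_1<\infty.
\end{equation*}
Thus Lemma \ref{prelim_lemma1}(v) applies and yields $x=\sum_{i\ge1}\alpha_i e_i\in\mathcal{K}^0_\Phi(\mathcal{M},\tau)$ together with $\lim_k\Phi(x-\sum_{i=1}^k x_i)=0$.

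Finally, for the norm estimate I would combine the triangle inequality for $\Phi$ on the partial sums with the bound just obtained: $\Phi(\sum_{i=1}^k x_i)\le\sum_{i=1}^k\Phi(x_i)\le\epsilon\lambda_1$ for every $k$, and then pass to the limit using the convergence $\sum_{i=1}^k x_i\to x$ in $\Phi$-norm (or lower semicontinuity of $\Phi$), so that $\Phi(x)\le\epsilon\lambda_1$. There is no serious obstacle here; the only point requiring a little care is the weak$^\ast$-convergence of the partial sums to an element of $\mathcal{M}$ (hypothesis (1) of Lemma \ref{prelim_lemma1}(v)), which follows because the partial sums are uniformly norm-bounded by $\epsilon$ and converge in the strong operator topology on the dense set $\bigcup_k e_1+\cdots+e_k$ of the identity's support, so the bulk of the argument is simply assembling the cited properties of normed ideals.
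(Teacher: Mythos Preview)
Your proposal is correct and is essentially the same as the paper's proof: both show that the partial sums $f_n=\sum_{i\le n}\alpha_i e_i\in\mathcal{F}(\mathcal{M},\tau)$ converge in $\Phi$-norm with the estimate $\sum_i\alpha_i\Phi(e_i)\le\epsilon\lambda_1$. The only cosmetic difference is that the paper verifies directly that $\{f_n\}$ is $\Phi$-Cauchy (via $\Phi(f_{n+m}-f_n)\le\sum_{i>n}\alpha_i\Phi(e_i)<\epsilon\lambda_1/2^n$), whereas you package the same computation through Lemma~\ref{prelim_lemma1}(v).
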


\begin{proof}
	By Definition $\ref{prelim_def1}$, the self-adjoint operator $f_{n}:=\sum_{1\leq i \leq n}{\alpha}_{i}e_i\in\F(\M,\tau)$. The inequality
	\begin{equation*}
		\Phi(f_{n+m}-f_{n})=\Phi(\sum\nolimits_{1\leq i \leq m}{\alpha}_{n+i}e_{n+i})\leq \sum\nolimits_{1\leq i \leq m}{\alpha}_{n+i}\Phi(e_{n+i})<\frac{\epsilon\cdot\lambda_{1}}{2^{n}}. 
	\end{equation*}
	guarantees that $\{f_{n}\}_{n\geq 1}$ is a Cauchy sequence with respect to the norm $\Phi(\cdot)$ and $\sum\nolimits_{i\geq 1}{\alpha}_{i}e_i$ belongs to ${\mathcal{K}^{0}_{\Phi}(\mathcal{M},\tau)^{}}$. A routine calculation implies that $\Phi(\sum\nolimits_{i\geq 1}{\alpha}_{i}e_i)\le\epsilon\cdot\lambda_{1}$.
\end{proof}

\begin{remark}\label{rem-B(H)}
	Suppose that $\mathcal{M}=\B(\H)$ and $\tau$ is a faithful, normal, semifinite, tracial weight on $\B(\H)$. Note that, as characterized in Lemma $1$ of \cite{Kuroda},  a unitarily invariant norm $\Phi(\cdot)$ on $\F(\M,\tau)$ is not equivalent to the trace norm if and only if
	\begin{equation*}
	\displaystyle
	\lim_{\substack{\tau(e)\rightarrow\infty\\ e\in \mathcal{P}\mathcal{F}(\mathcal{M},\tau)}}\frac{\Phi(e)}{\tau(e)} =0. 
	\end{equation*}
	Enlightened by this characterization, we have the following theorem.
\end{remark}

\begin{theorem}\label{B(H)-case}
Let $\H$ be a separable, infinite-dimensional Hilbert space. Suppose that $\mathcal{M}=\B(\H)$ and $\tau$ is a faithful, normal, semifinite, tracial weight on $\B(\H)$. Let $\mathcal{K}_{\Phi}(\mathcal{M},\tau)$ be a normed ideal of $(\mathcal{M},\tau)$ equipped with a unitarily invariant norm $\Phi(\cdot)$ (defined in Definition $\ref{prelim_def1}$). Assume that
	\begin{equation*}
	\displaystyle
	\lim_{\substack{\tau(e)\rightarrow\infty\\ e\in \mathcal{P}\mathcal{F}(\mathcal{M},\tau)}}\frac{\Phi(e)}{\tau(e)} =0. 
	\end{equation*}
	Then for each $x\in \mathcal{M} $ and every $\epsilon>0$, there exists an irreducible operator $y$ in $\mathcal{M}$ such that
	\begin{enumerate}
		\item[(i)] \quad  $x-y\in{\mathcal{K}^{0}_{\Phi}(\mathcal{M},\tau)}$; 
		\item[(ii)]\quad  $\Phi(x-y)\le\epsilon$. 
	\end{enumerate}
\end{theorem}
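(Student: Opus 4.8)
The plan is to reduce the general bounded operator $x \in \mathcal{B}(\mathcal{H})$ to a normal (indeed self-adjoint) piece and then build irreducibility by hand, exactly along the lines of Halmos' original Hilbert--Schmidt argument but now controlled by the $\Phi(\cdot)$-norm rather than $\|\cdot\|_2$. First I would write $x = a + ib$ with $a,b$ self-adjoint, and apply Theorem \ref{Li-Shen-Shi-3} to $a$: this produces a diagonal operator $d = \sum_n \lambda_n e_n$ with $\{\lambda_n\} \subseteq [-\|a\|,\|a\|]$, with each $e_n \in \mathcal{P}\mathcal{F}(\mathcal{M},\tau)$ of rank at most $m_0$, with $\Phi(e_n) \le m_0+1$, and with $a - d \in \mathcal{K}_\Phi^0(\mathcal{M},\tau)$ and $\Phi(a-d) \le \epsilon/3$. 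In the $\mathcal{B}(\mathcal{H})$ setting one may in fact take each $e_n$ to be a rank-one projection, which keeps the combinatorics of the classical argument intact. So up to a $\Phi$-small, $\mathcal{K}_\Phi^0$ perturbation it suffices to make $d + ib$ irreducible.

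Next I would dispose of the imaginary part: apply the Weyl--von Neumann--type machinery (or simply Theorem \ref{Li-Shen-Shi-3} again) to $b$ to replace it by a diagonal operator $d'$ with $\Phi(b-d') \le \epsilon/3$, and then note that a diagonal self-adjoint operator can be perturbed by an arbitrarily $\Phi$-small operator supported on the off-diagonal of the $\{e_n\}$-grid so that the resulting operator, together with $d$, generates everything. Concretely, the strategy is Halmos': choose a single orthonormal-type "grid" refining the supports of $d$ and $d'$, perturb $d$ on the diagonal by summable amounts $\alpha_i$ (controlled by Lemma \ref{compact-perturbation}, which guarantees $\sum_i \alpha_i e_i \in \mathcal{K}_\Phi^0$ and $\Phi(\sum_i \alpha_i e_i) \le \epsilon \lambda_1$) so that the diagonal entries become pairwise distinct, and add a small weighted unilateral-shift-type operator $s = \sum_i \beta_i e_{i+1} u_i e_i$ (with $\sum_i |\beta_i| \Phi(e_i)$ small, hence $s \in \mathcal{K}_\Phi^0$ with $\Phi(s)$ small by Lemma \ref{prelim_lemma1}(v) and the triangle inequality). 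The point is that a diagonal operator with distinct eigenvalues has commutant equal to the diagonal algebra, and adjoining a nonzero "connecting" term $s$ between consecutive grid projections forces any commuting projection to be constant along the chain, hence trivial --- this is precisely the elementary computation in \cite{Rad}.

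The bookkeeping I would carry out is: set $y = d + i d' + p + s$ where $p = \sum_i \alpha_i e_i$ is the small diagonal correction making eigenvalues distinct and $s$ is the small connecting term; then $x - y = (a-d) + i(b-d') - p - s$, each summand lies in $\mathcal{K}_\Phi^0(\mathcal{M},\tau)$ (an ideal, hence a subspace), so $x - y \in \mathcal{K}_\Phi^0(\mathcal{M},\tau)$, and by the triangle inequality $\Phi(x-y) \le \Phi(a-d) + \Phi(b-d') + \Phi(p) + \Phi(s) \le \epsilon$ once the free parameters $\{\alpha_i\},\{\beta_i\}$ are chosen small enough (using $\Phi(e_i) \le m_0+1$ uniformly). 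The irreducibility of $y$ is then checked directly: if a projection $q \in \mathcal{M}$ commutes with $y$, one shows $q$ commutes with the diagonal part (whose eigenvalues one has arranged to be distinct, so $q$ is diagonal with respect to the grid), and then commutation with $s$ propagates the scalar value of $q$ along the chain of grid projections and forces $q \in \{0,I\}$.

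The main obstacle will be arranging all of this so that the eigenvalues of the diagonal part can genuinely be made pairwise distinct while the correction $p$ stays $\Phi$-small: the classical argument uses rank-one projections and perturbs each diagonal entry by an independent tiny scalar, and one must check that the uniform bound $\Phi(e_i) \le m_0+1$ (rather than $\Phi(e_i)=1$) still permits a summable, $\Phi$-small choice --- this is exactly what Lemma \ref{compact-perturbation} delivers, so the obstacle is really just careful verification rather than a new idea. A secondary subtlety is ensuring that the connecting term $s$, built from partial isometries between the $e_i$, actually lies in $\mathcal{K}_\Phi^0(\mathcal{M},\tau)$ and not merely in $\mathcal{K}_\Phi(\mathcal{M},\tau)$; this follows from Lemma \ref{prelim_lemma1}(v) since the partial sums of $s$ are finite-rank and $\sum_i \Phi(\beta_i e_{i+1}u_i e_i) \le \sum_i |\beta_i|(m_0+1) < \infty$.
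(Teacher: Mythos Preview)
Your overall strategy coincides with the paper's: diagonalize $a=\mathrm{Re}\,x$ via Theorem~\ref{Li-Shen-Shi-3} with rank-one $e_n$'s, perturb the diagonal by a summable sequence (Lemma~\ref{compact-perturbation}) to force pairwise distinct eigenvalues, and then arrange nonzero $(i,i{+}1)$ entries in the imaginary part so that any projection commuting with $y$ is first forced to be diagonal and then forced to be constant along the chain.

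There is, however, an unnecessary detour that introduces a genuine gap. Applying Theorem~\ref{Li-Shen-Shi-3} a second time to $b$ yields $d'$ diagonal with respect to a \emph{different} family of rank-one projections $\{f_n\}$, and the ``single orthonormal-type grid refining the supports of $d$ and $d'$'' that you invoke does not exist: two rank-one resolutions of the identity admit no common nontrivial refinement. Fortunately nothing in the irreducibility check uses the diagonality of $d'$; once $\mathrm{Re}\,y$ has distinct eigenvalues in the $\{e_n\}$-basis, the only thing needed from the imaginary part is that its $(i,i{+}1)$ entries in that same basis be nonzero. The paper therefore never diagonalizes $b$: it writes $b=\sum_{i,j}\beta'_{ij}e_{ij}$ in the $\{e_n\}$-basis and replaces only those super-diagonal entries $\beta'_{i,i+1}$ that happen to vanish by $\epsilon/2^{i+2}$ (symmetrically on the sub-diagonal, so the result $b_2$ stays self-adjoint). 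This is a $\Phi$-small $\mathcal K^0_\Phi$-perturbation by the same estimate as in Lemma~\ref{compact-perturbation}, and by construction every $(i,i{+}1)$ entry of $b_2$ is nonzero with no possibility of cancellation. Your version can be repaired by dropping the diagonalization of $b$ entirely, or at least by choosing each $\beta_i\neq -\langle e_{i+1},(\mathrm{Im}\,y_0)e_i\rangle$; but the second application of Theorem~\ref{Li-Shen-Shi-3} contributes nothing. A related bookkeeping slip: in $y=d+id'+p+s$ as written, the non-self-adjoint shift $s$ leaks into $\mathrm{Re}\,y$ and destroys its diagonal form --- you need the connecting perturbation to sit inside the imaginary part, e.g.\ $y=(d+p)+ib_2$ with $b_2=b_2^*$.
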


\begin{proof}
	Let $x\in \M$ and $\epsilon>0$. Write $x=a+ib$, where $a$ and $b$ are self-adjoint operators in $\M$. By applying Theorem \ref{Li-Shen-Shi-3}, there is a diagonal, self-adjoint operator $a_{1}:=\sum_{i\geq 1}\alpha^{\prime}_{i}e_{i}$ in $\B(\H)$ with $\{\alpha^{\prime}_{i}\}_{i\geq 1}\subseteq[-\Vert a\Vert,\Vert a\Vert]$ such that
	\begin{enumerate}
		\item\quad  $e_{i}$ and $e_{j}$ are mutually orthogonal projections for $i\neq j$;
		\item\quad  each $e_{i}$ is a minimal projection in $\B(\H)$ with $\sum_{i\geq 1}e_{i}=I$;
		\item\quad  $a-a_{1}\in{\mathcal{K}^{0}_{\Phi}(\mathcal{M},\tau)^{}}$; 
		\item\quad  $\Phi(a-a_{1})\le\epsilon/4$.
	\end{enumerate}
	As an application of Lemma \ref{compact-perturbation}, there is a diagonal operator $a_{2}:=\sum_{i\geq 1}\alpha_{i}e_{i}$ in $\B(\H)$ with $\{\alpha_{i}\}_{i\geq 1}\subseteq[-\Vert a \Vert,\Vert a \Vert]$ such that
	\begin{enumerate}
		\item\quad  $\alpha_{i}\neq\alpha_{j}$ for $i\neq j$;
		\item\quad  $a_{1}-a_{2}\in{\mathcal{K}^{0}_{\Phi}(\mathcal{M},\tau)^{}}$; 
		\item\quad  $\Phi(a_{1}-a_{2})\le\epsilon/4$.
	\end{enumerate}
	Note that the construction of $a_{2}$ entails that each operator in $\B(\H)$ commuting with $a_{2}$ is diagonal with respect to $\{e_{i}\}_{i\in\NNN}$.

	Corresponding to $\{e_{i}\}_{i\geq 1}$, there is a system of matrix units $\{e_{ij}\}_{i,j\in \NNN}$ for $\B(\H)$ such that
	\begin{enumerate}
		\item\quad  $e_i=e_{ii}$ for all $i\in\NNN$;
		\item\quad  $e_{ij}=e^{\ast}_{ji}$, for all $i$ and $j$ in $\NNN$;
		\item\quad  $e_{mn}e_{ij}=\delta_{ni}e_{mj}$ for all  $m, n, i, j \in\NNN$;
		\item\quad  {\scriptsize {SOT}}-${\rm }\sum^{\infty}_{i=1} e_{ii}=I$,
	\end{enumerate} 
	where $\delta_{n i}$ is the Kronecker symbol.

	With respect to $\{e_{ij}\}_{i,j\in \NNN}$, the self-adjoint operator $b$ can be expressed as
	\begin{equation*}
		b:=\sum_{i,j\geq 1}\beta^{\prime}_{ij}e_{ij},
	\end{equation*} 
	where $\beta^{\prime}_{ij}$'s are complex numbers. Note that $\beta^{\prime}_{ij}\beta^{\prime}_{ji}=|\beta^{\prime}_{ij}|^{2}$. For the sake of simplicity, the entries $\beta^{\prime}_{ij}$'s satisfying $i+k=j$ are said to be in the \emph{$k$-diagonal} of $\sum_{i,j\geq 1}\beta^{\prime}_{ij}e_{ij}$. Now we focus on the the 1-diagonal. It is easy to check that the operator $v:=\sum_{i \ge 1}e_{i,i+1}$ is a partial isometry. 
	
	Construct a self-adjoint operator $b_2:=\sum_{i,j\ge 1}\beta_{ij}e_{ij}$ and a diagonal, self-adjoint operator $c:=\sum_{i\ge 1}\gamma_{i}e_{ii}$ as follows:
	\begin{equation*}
		\beta_{ij}:=\begin{cases}
		\beta^{\prime}_{ij},& \mbox{if } |i-j|\neq 1;\\
		\beta^{\prime}_{ij},& \mbox{if } |i-j|=1 \mbox{ and }\beta^{\prime}_{ij}\neq 0;\\
		\epsilon/2^{i+2},&\mbox{if } j=i+1 \mbox{ and }\beta^{\prime}_{ij}= 0;\\
		\epsilon/2^{j+2},&\mbox{if } i=j+1 \mbox{ and }\beta^{\prime}_{ij}= 0.
		\end{cases}\quad \mbox{ and }\quad 
		\gamma_{i}:=\begin{cases}
		0,& \mbox{if }    \beta^{\prime}_{i,i+1}\neq 0;\\
		\epsilon/2^{i+2},&\mbox{if } \beta^{\prime}_{i,i+1}= 0.
		\end{cases} 
	\end{equation*}
Apparently, $b_2$ is self-adjoint and $b_2=b+cv+v^{\ast}c$. By Lemma \ref{compact-perturbation}, we have that
\begin{equation*}
	 b-b_{2}\in{\mathcal{K}^{0}_{\Phi}(\mathcal{M},\tau)^{}} \quad \mbox{ and } \quad 
	 \Phi(b-b_{2})\le\epsilon/2.
\end{equation*}

Define an operator $y:=a_{2}+ib_{2}$. We claim that $y$ is irreducible in $\B(\H)$. Actually, if $p$ is a projection in $\B(\H)$ commuting with $y$, then $p$ commutes with both $a_{2}$ and $b_{2}$. The equality $pa_2=a_2p$ implies that $p$ is diagonal. Thus, each diagonal entry of $p$ must be either $e_{ii}$ or $0$. Since each 1-diagonal entry of $b_2$ is non-zero, the equality $pb_2=b_2p$ guarantees that $p$ is trivial.

Note that
\begin{equation*}
\begin{aligned}
	\Phi(x-y)&=\Phi(a+ib-a_1+a_1-a_2-ib_{2}) \\
	&\leq \Phi(a-a_1)+\Phi(a_1-a_2)+\Phi(b-b_{2})\\
	&\leq \epsilon.
\end{aligned} 
\end{equation*} 
It follows that
\begin{equation*}
	x-y\in{\mathcal{K}^{0}_{\Phi}(\mathcal{M},\tau)^{}}\quad \mbox{ and }\quad \Phi(x-y)\le\epsilon.
\end{equation*}
This completes the proof.
\end{proof}

\subsection{Case 2: $\Phi$-norm-density of irreducible operators in type ${\rm II}_{\infty}$ factors}

\ \newline

Since type ${\rm II}_{\infty}$ factors contain no minimal projections, the proof of Theorem \ref{B(H)-case} doesn't work directly for any type ${\rm II}_{\infty}$ factor. Fortunately, the irrational rotation algebra $\mathcal{A}_{\theta}$ enables us to develop new techniques to extend Theorem \ref{B(H)-case}.

\begin{theorem}\label{II_infty-case}
Suppose that $(\mathcal{M},\tau)$ is a type ${\rm II}_{\infty}$ factor with separable predual, where $\tau$ is a faithful, normal, semifinite, tracial weight. Let $\mathcal{K}_{\Phi}(\mathcal{M},\tau)$ be a normed ideal of $(\mathcal{M},\tau)$ equipped with a $\Vert\cdot\Vert$-dominating, unitarily invariant norm $\Phi(\cdot)$ defined as in Definition $\ref{prelim_def1}$. 

Assume that
	\begin{equation*}
	\displaystyle
	\lim_{\substack{\tau(e)\rightarrow\infty\\ e\in \mathcal{P}\mathcal{F}(\mathcal{M},\tau)}}\frac{\Phi(e)}{\tau(e)} =0.
	\end{equation*}
	Then for each $x\in \mathcal{M} $ and every $\epsilon>0$, there exists an irreducible operator $y$ in $\mathcal{M}$ such that
	\begin{equation*}
		x-y\in{\mathcal{K}^{0}_{\Phi}(\mathcal{M},\tau)^{}} \quad \mbox{ and } \quad \Phi(x-y)\le\epsilon.
	\end{equation*}
\end{theorem}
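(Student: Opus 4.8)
The plan is to mimic the structure of the proof of Theorem \ref{B(H)-case}, replacing the role of minimal projections and the standard $\B(\H)$ system of matrix units by an ample copy of the hyperfinite type ${\rm II}_1$ factor inside $\M$ together with a Cartan masa that is simultaneously maximal abelian in $\M$, as supplied by Lemma \ref{Popa}. First I would write $x = a + ib$ with $a,b$ self-adjoint in $\M$. Applying Theorem \ref{Li-Shen-Shi-3} to $a$, I get a diagonal operator $d = \sum_{n\ge1}\lambda_n e_n$ with $\{e_n\}$ a partition of $I$ by projections in $\P\F(\M,\tau)$, $\Phi(e_n)\le m_0+1$, $\{\lambda_n\}\subseteq[-\Vert a\Vert,\Vert a\Vert]$, $a-d\in\K^0_\Phi(\M,\tau)$ and $\Phi(a-d)\le\epsilon/4$. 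Since the $e_n$ have uniformly bounded trace, I can cut and regroup them to arrange that all $e_n$ are equivalent in $\M$ (say of trace $t_0$ for a fixed small $t_0$), at the cost of another $\Phi$-small perturbation controlled by Lemma \ref{compact-perturbation}; I may also perturb the $\lambda_n$ slightly so that they become pairwise distinct. This produces a diagonal operator $a_2=\sum\lambda'_n e_n$ with $a-a_2\in\K^0_\Phi$, $\Phi(a-a_2)\le\epsilon/2$, and with the property that any element of $\M$ commuting with $a_2$ is "block-diagonal" with respect to $\{e_n\}$.

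Next I would build the type ${\rm II}_1$ structure. Because all $e_n$ are equivalent projections summing to $I$ in the properly infinite factor $\M$, the algebra $\mm$ generated by $\{e_n\}$ together with a system of partial isometries $w_{mn}$ implementing the equivalences ($w_{mn}e_nw_{mn}^*=e_m$, $w_{mn}^*=w_{nm}$, $w_{mn}w_{ij}=\delta_{ni}w_{mj}$) is isomorphic to $\B(\ell^2)\,\bar\otimes\,e_1\M e_1$, and $e_1\M e_1$ is a type ${\rm II}_1$ factor with separable predual. By Lemma \ref{Popa}, $e_1\M e_1$ contains an irreducible hyperfinite subfactor with a masa $\A_0$ which is maximal abelian in $e_1\M e_1$; transporting via the $w_{n1}$ I obtain inside $\M$ a copy $\N$ of the hyperfinite ${\rm II}_1$ factor and a masa $\A$ of $\M$ that is a Cartan masa of $\N$ and contains all the $e_n$ — so that $a_2\in\A$. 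By Lemma \ref{irreducible-opt} (applied inside a copy of $\pi(\A_\theta)''$ sitting in $\N$, using Lemma \ref{Popa2} to move the Cartan masa $\A$ to $\{\pi(u),\pi(u)^*\}''$) there is a self-adjoint $b_0$ in a complementary Cartan masa such that the relative commutant argument forces irreducibility: if $p\in\M$ commutes with $a_2$ and with a suitable self-adjoint $b_2$ to be built, then $p\in\A$ (from $a_2$), hence $p\in\N$, hence $p$ commutes with a generator of the "$v$" masa and is scalar.

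The middle step is to realize $b$ as a small $\Phi$-perturbation of such a $b_2$. Here I would write $b$ in the "matrix" picture relative to $\{w_{mn}\}$, isolate its $1$-diagonal part, and — exactly as in the proof of Theorem \ref{B(H)-case} — add a $\Phi$-small correction $cv+v^*c$ with $v=\sum_n w_{n,n+1}$ and $c$ a diagonal operator with entries $\epsilon/2^{n+2}$, to guarantee that the off-diagonal coupling between consecutive blocks is "generic"; simultaneously I add a $\Phi$-small self-adjoint element of the complementary Cartan masa to inject the required $b_0$-type ingredient. Lemma \ref{compact-perturbation} gives $b-b_2\in\K^0_\Phi$ and $\Phi(b-b_2)\le\epsilon/2$. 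Then $y:=a_2+ib_2$ satisfies $x-y\in\K^0_\Phi(\M,\tau)$ and $\Phi(x-y)\le\epsilon$ by the triangle inequality, and the commutation analysis above shows $y$ is irreducible in $\M$.

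I expect the main obstacle to be the irreducibility verification: unlike in $\B(\H)$, a projection commuting with the diagonal $a_2$ is not literally diagonal but only block-diagonal over $\{e_n\}$, with blocks in the type ${\rm II}_1$ factors $e_n\M e_n$; one must therefore carefully engineer $b_2$ so that its interaction both across blocks (the $1$-diagonal coupling, handled by the $\B(\ell^2)$-part as in Theorem \ref{B(H)-case}) and within a block (handled by the hyperfinite/irrational-rotation machinery of Lemmas \ref{irreducible-opt}--\ref{Popa2}) simultaneously kills all nontrivial commuting projections. Making these two mechanisms compatible — i.e. choosing $\A$, the $w_{mn}$, and $b_2$ so that the combined relative commutant is exactly $\mathbb{C}I$ — is the delicate part, and it is also where the separability of the predual (needed for Popa's theorem) and the uniform bound $\Phi(e_n)\le m_0+1$ (needed to keep all perturbations $\Phi$-small via Lemma \ref{compact-perturbation}) are used in an essential way.
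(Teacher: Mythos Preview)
Your overall strategy matches the paper's: diagonalize the real part via Theorem \ref{Li-Shen-Shi-3}, install nonzero $1$-diagonal couplings in the imaginary part as in Theorem \ref{B(H)-case}, and use Popa's irreducible hyperfinite subfactor together with the irrational-rotation structure to handle within-block commuting projections. Your last paragraph correctly identifies the essential difficulty.

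However, the concrete irreducibility argument you sketch has a genuine gap. The assertion ``$p\in\A$ (from $a_2$)'' is false: your $a_2=\sum\lambda'_n e_n$ generates only the diagonal algebra $W^*(\{e_n\})$, not the masa $\A$, so $[p,a_2]=0$ yields merely that $p$ is block-diagonal, $p=\sum_n p_{nn}$ with $p_{nn}\in e_n\M e_n$. Your proposed remedy --- adding a self-adjoint $b_0$ from the complementary Cartan masa to $b_2$ --- does not close the gap, because the block-diagonal part of $b$ is uncontrolled: $p_{nn}$ will commute with $e_n b e_n + e_n b_0 e_n$, and since $e_n b e_n$ is an arbitrary self-adjoint element of $e_n\M e_n$ you cannot isolate the $b_0$-contribution from this sum. (Your global $\N$ and $\A$ also do not fit together as claimed: a Cartan masa of $1\otimes\N_0$ is $1\otimes\A_0$, which does not contain the $e_n$.)

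The paper repairs this with two moves absent from your plan. First, before any further perturbation, it refines the partition $\{e'_n\}$ using spectral projections of the diagonal blocks $e'_n b e'_n$, so that in the finer partition $\{e_n\}$ each diagonal block of $b_2$ is \emph{scalar}: $e_n b_2 e_n=\beta_{nn}e_n$. Second, it works block by block (applying Lemma \ref{Popa} inside each $e_n\M e_n$) and perturbs \emph{both} parts within each block: to the real part it adds $\lambda_n p_n$ with $p_n$ a nontrivial projection in $\{u_n,u_n^*\}''$, and to the imaginary part it adds $\lambda_n h_n$ with $h_n$ a single self-adjoint generator of the masa $\A_n=\{v_n,v_n^*\}''$. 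The argument is then: $[p,a_3]=0$ forces $p$ block-diagonal with $[p_{nn},p_n]=0$; $[p,b_3]=0$ gives $[p_{nn},\beta_{nn}e_n+\lambda_n h_n]=0$, hence $[p_{nn},h_n]=0$, hence $p_{nn}\in\A_n$ (a masa in $e_n\M e_n$); finally $p_{nn}\in\{v_n\}''$ commuting with the nontrivial $p_n\in\{u_n\}''$ forces $p_{nn}\in\{0,e_n\}$ by the irrational-rotation calculation of Lemma \ref{irreducible-opt}. The $1$-diagonal coupling then finishes exactly as you describe. The step you are missing is precisely this ``scalarize the diagonal of $b$, then perturb $a$ and $b$ in the two transverse Cartan directions'' mechanism.
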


\begin{proof}
	Fix $\epsilon>0$ and an operator $x:=a+ib$ in $\M$ such that $a$ and $b$ are self-adjoint operators in $\M$. First, we make perturbations of $a$ and $b$ respectively. Then we construct an irreducible operator $y$ in $\M$ as required. 
	
	By virtue of Theorem \ref{Li-Shen-Shi-3}, there exists a diagonal, self-adjoint operator $a_{1}:=\sum_{i\geq 1}\alpha^{\prime}_{i}e^{\prime}_{i}$ in $\M$ with $\{\alpha^{\prime}_{i}\}_{i\geq 1}\subseteq[-\Vert a\Vert,\Vert a\Vert]$ such that
	\begin{enumerate}
		\item\quad  $e^{\prime}_{i}$ and $e^{\prime}_{j}$ are mutually orthogonal projections for $i\neq j$ and $\sum_{i\geq 1}e^{\prime}_{i}=I$;
		\item\quad  each projection $e^{\prime}_{i}$ is of finite-rank and satisfies $\Phi(e^{\prime}_{i})\leq m_0+1$ for a uniformly upper bound $m_0>1$;
		\item\quad  $a-a_{1}\in{\mathcal{K}^{0}_{\Phi}(\mathcal{M},\tau)^{}}$; 
		\item\quad  $\Phi(a-a_{1})\le\epsilon/8$.
	\end{enumerate}

	With respect to $\{e^{\prime}_{i}\}_{i\geq 1}$, the self-adjoint operator $b$ can be expressed in the form
	\begin{equation*}
		b=\sum_{i,j\ge 1}b^{\prime}_{ij} \quad \mbox{ and } \quad b^{\prime}_{ij}:=e^{\prime}_{i}be^{\prime}_{j} \quad \mbox{ for all } i,j\in\NNN,
	\end{equation*} 
	where each $b^{\prime}_{ij}$ can be viewed as an operator from $\mbox{ran }e^{\prime}_{j}$ to $\mbox{ran }e^{\prime}_{i}$. Note that each $b^{\prime}_{ii}$ is self-adjoint. When we consider $b^{\prime}_{ii}$ as a self-adjoint operator restricted on $\mbox{ran }e^{\prime}_{i}$ for all $i\in\NNN$, by virtue of the spectral theorem, there are real numbers $\{\beta^{\prime}_{i,j}\}_{1\le j \le n_i}\subseteq [-\Vert b^{\prime}_{ii}\Vert, \Vert b^{\prime}_{ii}\Vert]$ and a finite partition $\{e^{\prime}_{i,j}\}_{1\le j \le n_i}$ of $e^{\prime}_{i}$ consisting of finitely many spectral projections of $b^{\prime}_{ii}$  such that
	\begin{enumerate}
		\item\quad  each $b^{\prime\prime}_{ii}:=\sum_{1\leq j \leq n_i}\beta^{\prime}_{i,j}e^{\prime}_{i,j}$ is self-adjoint;
		\item\quad  $\Phi(b^{\prime}_{ii}-b^{\prime\prime}_{ii})\leq\Vert b^{\prime}_{ii}-b^{\prime\prime}_{ii}\Vert\Phi(e^{\prime}_{i})<\epsilon/2^{i+3}$.
	\end{enumerate}
	For convenience, we rename $\{e^{\prime}_{i,j}\}_{1\le j \le n_i; 1\le i}$ as $\{e_{i}\}_{i\geq 1}$. Note that $\{e_{i}\}_{i\geq 1}$ is a finer partition of the identity operator $I$ relative to $\{e^{\prime}_{i}\}_{i\geq 1}$.

	Based on the matrix form $b:=\sum_{i,j\ge 1}b^{\prime}_{ij}$, we define an operator $b_{2}:=\sum_{i,j\ge 1}b_{ij}$ in the form:
	\begin{equation*}
		b_{ij}=\begin{cases}b^{\prime\prime}_{ii},& \mbox{for }i=j;\\
		b^{\prime}_{ij},& \mbox{for }i\neq j.
		\end{cases} 
	\end{equation*}
	This means that  we replace each $(i,i)$-th entry $b^{\prime}_{ii}$ of $b$ with $b^{\prime\prime}_{ii}$ to obtain $b_{2}$. It follows that
	\begin{equation*}
		\Phi(b-b_{2})\le \sum_{i\ge 1} \Phi(b^{\prime}_{ii}-b^{\prime\prime}_{ii}) \le \sum_{i\ge 1} \epsilon/2^{i+3} = \epsilon/8. 
	\end{equation*} 
	Furthermore, we express the operator $b_{2}$ in the form $b_{2}=\sum_{i,j}e_{i}b_{2}e_{j}$. At the same time, with respect to $\{e_{i}\}_{i\geq 1}$, we rename $\{\beta^{\prime}_{i,j}\}_{1\le j \le n_i; 1\le i}$ as $\{\beta_{ii}\}_{i\ge 1}$ such that
	\begin{equation*}
		\beta_{ii}e_{i}:=e_{i}b_{2}e_{i}, \quad \mbox{ for all } \ i\ge 1.
	\end{equation*}
	Note that $\beta_{ii}$'s are all real numbers. Apparently, $b_{2}$ is self-adjoint and belongs to $\M$.

	On the other hand, in terms of $\{e_{i}\}_{i\geq 1}$, the operator $a_{1}$ can be expressed as
	\begin{equation*}
		a_{1}:=\sum_{i\geq 1}\alpha^{\prime\prime}_{i}e_{i}.
	\end{equation*}  
	Then as an application of Lemma \ref{compact-perturbation}, there is a diagonal operator $a_{2}:=\sum_{i\geq 1}\alpha_{i}e_{i}$ in $\M$ with a sequence $\{\alpha_{i}\}_{i\geq 1}\subseteq[-2\Vert a\Vert,2\Vert a\Vert]$ such that
	\begin{enumerate}
		\item \quad $\alpha_{i}\neq\alpha_{j}$ for $i\neq j$;
		\item \quad $a_{1}-a_{2}\in{\mathcal{K}^{0}_{\Phi}(\mathcal{M},\tau)^{}}$; 
		\item \quad $\Phi(a_{1}-a_{2})\le\epsilon/8$.
	\end{enumerate}
	Define $x_{2}:=a_{2}+ib_{2}$. We have
	\begin{enumerate}
		\item \quad $x-x_2\in{\mathcal{K}^{0}_{\Phi}(\mathcal{M},\tau)^{}}$; 
		\item \quad $\Phi(x-x_2)\le \Phi(a-a_1)+\Phi(a_1-a_2)+\Phi(b-b_2) \le \epsilon/8+ \epsilon/8+ \epsilon/8< \epsilon/2$. 
	\end{enumerate}

	In the following, we construct an irreducible operator $y$ in $\M$ based on $x_2$ up to a $\Phi(\cdot)$-norm perturbation less than $\epsilon/8$.

	Since $(\M,\tau)$ is a type ${\rm II}_{\infty}$ factor with separable predual and each projection $e_{n}$ is of finite-rank in $(\M,\tau)$, we have that each $e_{n}\M e_{n}$ (restricted to $\mbox{ran}\, e_{n}$) is a type ${\rm II}_{1}$ factor with separable predual. By applying Lemma \ref{Popa}, there exists an irreducible, hyperfinite subfactor $\N_{n}$ in $e_{n}\M e_{n}$. Furthermore, there exists a Cartan masa $\A_{n}$ of $\N_{n}$ such that $\A_{n}$ is also a masa in $e_{n}\M e_{n}$. Recall that a \emph{masa} is a maximal abelian self-adjoint algebra.

	Due to Connes \cite{Connes3}, all the hyperfinite type ${\rm II}_{1}$ factors are isomorphic. By the arguments about the irrational rotation algebra $\A_{\theta}$ preceding Lemma \ref{irreducible-opt}, there exist two unitary operators $u_{n}$ and $v_{n}$ in $\N_{n}$ with $u_{n}v_{n}=e^{2\pi i\theta}v_{n}u_{n}$, generating $\N_{n}$. On the other hand, Lemma \ref{Popa2} guarantees the existence of an automorphism $\pi_n$ on $e_{n}\M e_{n}$ such that $\pi_n(\A_{n})=\{v_{n},v^{\ast}_{n}\}^{\prime\prime}$. For the sake of simplicity, we assume that $\A_{n}=\{v_{n},v^{\ast}_{n}\}^{\prime\prime}$, where by $\{v_{n},v^{\ast}_{n}\}^{\prime\prime}$ we denote the von Neumann algebra generated by $v_n$ and $v^{\ast}_n$ in $e_{n}\M e_{n}$.

	Let $p_{n}$ be a non-trivial projection in $\{u_{n},u^{\ast}_{n}\}^{\prime\prime}$ and $h_{n}$ be a self-adjoint, single generator of $\{v_{n},v^{\ast}_{n}\}^{\prime\prime}$ with $\Vert h_{n}\Vert \leq 1$. Then Lemma \ref{irreducible-opt} entails that $\alpha p_{n}+i\beta h_{n}$ is irreducible in $\N_{n}$ for every pair of non-zero real numbers $\alpha$ and $\beta$.

	Relative to the preceding self-adjoint operator $a_{2}$, there exists a subsequence $\{\lambda_{n}\}_{n\geq 1}$ of $\{\epsilon/2^{n}\}_{n\geq 1}$ such that
	\begin{enumerate}
		\item \quad $\alpha_{n}+\lambda_{n}\neq\alpha_{m}+\lambda_{m}$ for $n\neq m$;
		\item \quad $\sum_{n\geq 1}\lambda_{n}<\epsilon/(8(m_0+1))$.
	\end{enumerate}

	Define $a_{3}:=\sum_{n\geq 1}\alpha_{n}e_{n}+\lambda_{n}p_{n}$. Considering each $\alpha_{n}e_{n}+\lambda_{n}p_{n}$ as an operator in $e_n\M e_n$, it follows that
	\begin{equation*}
		\sigma_{e_n\M e_n}(\alpha_{n}e_{n}+\lambda_{n}p_{n})\cap \sigma_{e_m\M e_m}(\alpha_{m}e_{m}+\lambda_{m}p_{m})=\emptyset, \quad \mbox{ for } n\neq m, 
	\end{equation*}
	and
	\begin{equation*}
		a_2-a_3\in{\mathcal{K}^{0}_{\Phi}(\mathcal{M},\tau)^{}}\quad \mbox{ and }\quad
		\Phi(a_2-a_3)\le\epsilon/8.
	\end{equation*}
	In terms of $\{e_{n}\}_{n\geq 1}$, define $c_3=\sum_{n,m\ge 1}e_{n}c_{3}e_{m}$ to be a self-adjoint operator in $\M$ in the form:
	\begin{equation*}
		e_{n}c_{3}e_{m}=\begin{cases} 0,&\mbox{ if }\ |n-m|\neq 1;\\
		0,& \mbox{ if }\ |n-m|= 1\mbox{ and } e_{n}b_2e_{m}\neq 0;\\
		\lambda_{n}w_{nm},& \mbox{ if }\ n+1=m \mbox{ and } e_{n}b_2e_{m}= 0;\\
		\lambda_{m}w^{\ast}_{mn},& \mbox{ if }\ n-1=m \mbox{ and } e_{n}b_2e_{m}= 0,
		\end{cases} 
	\end{equation*}
	where each $w_{nm}$ is a nonzero partial isometry such that $e_{n}w_{nm}=w_{nm}=w_{nm}e_{m}$. Note that, the existence of nonzero partial isometries $w_{nm}$'s follows from that $\M$ is a factor. Thus $c_{3}\in\M$ is a self-adjoint operator satisfying
	\begin{equation*}
	c_3\in{\mathcal{K}^{0}_{\Phi}(\mathcal{M},\tau)^{}}\quad \mbox{ and }\quad
	\Phi(c_3)\le\epsilon/4. 
	\end{equation*}

	Define $b_{3}:=b_{2}+c_{3}+\sum_{n\geq 1}\lambda_{n}h_{n}$. It follows that 
	\begin{equation*}
		b_3-b_2\in{\mathcal{K}^{0}_{\Phi}(\mathcal{M},\tau)^{}}\quad \mbox{ and }\quad \Phi(b_3-b_2)\le 3\epsilon/8. 
	\end{equation*}

	\vskip 0.3cm
	\noindent\emph{Claim $\ref{II_infty-case}.1.$ \ The operator $y:=a_{3}+ib_{3}$ is irreducible in $\M$.}
	\vskip 0.3cm

	\noindent{\emph{Proof of Claim $\ref{II_infty-case}.1:$}} Actually, if $p$ is a projection in $\M$ commuting with $y$, then $p$ commutes with both $a_{3}$ and $b_{3}$. Write $p=\sum_{n,m\ge 1}p_{nm}$ relative to $\{e_{n}\}_{n\geq 1}$. This means that $p_{nm}:=e_n p e_m$ for each pair of integers $m\ge 1$ and $n\ge 1$. 

	By virtue of the construction of $a_{3}$, we have $p_{nm}=0$ for $n\neq m$. Note that $p_{nn}$ is a subprojection of $e_{n}$ for every $n\ge 1$. It follows that $p_{nn}p_{n}=p_{n}p_{nn}$ and $p_{nn}h_{n}=h_{n}p_{nn}$ for each $n\ge 1$. Since each $h_{n}$ generates $\{v_{n},v^{\ast}_{n}\}^{\prime\prime}$ for all $n\in \NNN$, the projection $p_{nn}$ commutes with each operator in $\{v_{n},v^{\ast}_{n}\}^{\prime\prime}$. Note that $\{v_{n},v^{\ast}_{n}\}^{\prime\prime}$ is also a masa in $e_{n}\M e_{n}$ for every $n\ge 1$. We have $p_{nn}\in \{v_{n},v^{\ast}_{n}\}^{\prime\prime}$ for all $n\ge 1$. Since $p_{n}$ is a non-trivial projection in $\{u_{n},u^{\ast}_{n}\}^{\prime\prime}$ for each $n\ge 1$, the equality $p_{nn}p_{n}=p_{n}p_{nn}$ entails that $p_{nn}$ equals $e_n$ or $0$ in $e_{n}\M e_{n}$ for all $n\geq 1$. 

	Note that $p$ also commutes with each 1-diagonal entry of $b_{3}$. By virtue of the construction of $c_{3}$, it follows that the projection $p$ is trivial in $\M$. This ends the proof of Claim $\ref{II_infty-case}.1$.
	
	\vskip 0.3cm
	
	\noindent\emph{(End of the proof of Theorem $\ref{II_infty-case}$)} Therefore, we have the inequality
	\begin{equation*}
	\begin{aligned}
		\Phi(x-y)&=\Phi(a+ib-a_3-ib_3)\\
		&\leq\Phi(a-a_1)+\Phi(a_1-a_2)+\Phi(a_2-a_3)+\Phi(b-b_2)+\Phi(b_2-b_3)\\
		&\leq \frac{\epsilon}{8}+\frac{\epsilon}{8}+\frac{\epsilon}{8}+\frac{\epsilon}{8}+\frac{3\epsilon}{8}<\epsilon.
	\end{aligned} 
	\end{equation*}
	The construction of $y$ implies that $x-y\in{\mathcal{K}^{0}_{\Phi}(\mathcal{M},\tau)^{}}$. This completes the proof.
\end{proof}

Combining Theorem \ref{B(H)-case} and Theorem \ref{II_infty-case}, we obtain the main theorem of this section.

\begin{theorem}\label{main-thm}
	Suppose that $(\mathcal{M},\tau)$ is a semifinite, properly infinite von Neumann factor with separable predual, where $\tau$ is a faithful, normal, semifinite, tracial weight. Let $\mathcal{K}_{\Phi}(\mathcal{M},\tau)$ be a normed ideal of $(\mathcal{M},\tau)$ equipped with a $\Vert\cdot\Vert$-dominating, unitarily invariant norm $\Phi(\cdot)$ defined as in Definition $\ref{prelim_def1}$. Assume that
	\begin{equation*}
	\displaystyle
	\lim_{\substack{\tau(e)\rightarrow\infty\\ e\in \mathcal{P}\mathcal{F}(\mathcal{M},\tau)}}\frac{\Phi(e)}{\tau(e)} =0. \tag{\ref{not-equiv-to-trace-norm}}
	\end{equation*}
	Then for each $x\in \mathcal{M} $ and every $\epsilon>0$, there exists an irreducible operator $y$ in $\mathcal{M}$ such that
	\begin{equation*}
		x-y\in{\mathcal{K}^{0}_{\Phi}(\mathcal{M},\tau)^{}} \quad \mbox{ and } \quad \Phi(x-y)\le\epsilon.
	\end{equation*}
	In other words, if a $\Vert\cdot\Vert$-dominating, unitarily invariant norm $\Phi(\cdot)$ satisfies $(\ref{not-equiv-to-trace-norm})$, then the set of irreducible operators in $\M$ is $\Phi(\cdot)$-norm dense in $(\M,\tau)$.
\end{theorem}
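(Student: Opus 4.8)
The plan is to reduce Theorem \ref{main-thm} to the two cases already settled, via the Murray--von Neumann classification. A semifinite, properly infinite von Neumann factor is, by definition, either of type ${\rm I}_{\infty}$ or of type ${\rm II}_{\infty}$. So it suffices to treat these two possibilities in turn, checking only that the hypothesis $(\ref{not-equiv-to-trace-norm})$ and the conclusion are insensitive to this split, and that all the structure involved is intrinsic to $(\M,\tau)$ and the norm $\Phi(\cdot)$.

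First I would dispose of the type ${\rm I}_{\infty}$ case. A type ${\rm I}_{\infty}$ factor $\M$ with separable predual is $*$-isomorphic to $\B(\H)$ for a separable, infinite-dimensional Hilbert space $\H$. Such a $*$-isomorphism carries a faithful, normal, semifinite, tracial weight on $\M$ to one on $\B(\H)$, carries the normed ideal $\K_{\Phi}(\M,\tau)$ with its norm $\Phi(\cdot)$ to a normed ideal of $(\B(\H),\operatorname{Tr})$ satisfying every clause of Definition \ref{prelim_def1} (the ideal inclusions $\F\subseteq\K_{\Phi}\subseteq\K$, the $\Vert\cdot\Vert$-domination, completeness, unitary invariance), and preserves the limiting condition $(\ref{not-equiv-to-trace-norm})$, since both sides of that limit are defined purely in terms of $\tau$ and $\Phi$ on $\mathcal{P}\F(\M,\tau)$. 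Moreover, Definition \ref{irreducible-opt} refers only to the relative commutant $W^{*}(a)'\cap\M$, so irreducibility of an operator in $\M$ is preserved as well. Hence Theorem \ref{B(H)-case} applies to the transported data and, transporting back, produces the desired irreducible $y\in\M$ with $x-y\in\K_{\Phi}^{0}(\M,\tau)$ and $\Phi(x-y)\le\epsilon$.

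The type ${\rm II}_{\infty}$ case is precisely the content of Theorem \ref{II_infty-case}: for each $x\in\M$ and $\epsilon>0$ it yields an irreducible $y\in\M$ with $x-y\in\K_{\Phi}^{0}(\M,\tau)$ and $\Phi(x-y)\le\epsilon$. Combining the two cases gives the stated conclusion, and the final assertion, that the set of irreducible operators is $\Phi(\cdot)$-norm dense in $(\M,\tau)$, is then immediate: $\epsilon>0$ was arbitrary, so every $x\in\M$ is a $\Phi(\cdot)$-norm limit of irreducible operators.

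Since the analytic substance is already carried out in Theorems \ref{B(H)-case} and \ref{II_infty-case}, there is no serious obstacle; the only point requiring care is the bookkeeping in the type ${\rm I}_{\infty}$ reduction, namely verifying that all of Definition \ref{prelim_def1}, the condition $(\ref{not-equiv-to-trace-norm})$, and the notion of irreducibility transport along a $*$-isomorphism onto $\B(\H)$. This is routine but worth recording explicitly, because Theorem \ref{B(H)-case} is phrased for $\M=\B(\H)$ rather than for an abstract type ${\rm I}_{\infty}$ factor.
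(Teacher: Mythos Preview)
Your proposal is correct and follows exactly the paper's own approach: the paper simply states that Theorem \ref{main-thm} is obtained by combining Theorem \ref{B(H)-case} and Theorem \ref{II_infty-case}, with no further argument. Your additional remarks about transporting the normed-ideal data and the notion of irreducibility along a $\ast$-isomorphism in the type ${\rm I}_{\infty}$ case are more explicit than what the paper records, but they add only routine bookkeeping rather than a different idea.
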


Note that, in the setting of $\B(\H)$, for every $p\ge 1$, the $\Vert\cdot\Vert_p$-norm is $\Vert\cdot\Vert$-dominating and unitarily invariant. In terms of Remark \ref{rem-B(H)}, if we define $\Phi(\cdot):=\Vert\cdot\Vert_p$, then the condition stated in (\ref{not-equiv-to-trace-norm}) holds if and only if  $p>1$, where $\Vert\cdot\Vert_p$ is defined as in $(\ref{p-norm})$. Thus we have the following corollary.

\begin{corollary}
	Let $\H$ be a complex separable, infinite-dimensional Hilbert space. Let $\mathcal{K}_{p}(\mathcal{H})$ be a normed ideal of $\B(\H)$ equipped with a $\Vert\cdot\Vert_p$-norm satisfying $p>1$ defined as in Definition $\ref{prelim_def2}$.
	Then for each operator $x\in \B(\H)$ and every $\epsilon>0$, there exists an irreducible operator $y$ in $\B(\H)$ such that
	\begin{equation*}
		x-y\in{\mathcal{K}_{p}(\mathcal{H})} \quad \mbox{ and } \quad \Vert x-y \Vert_p\le\epsilon. 
	\end{equation*}
\end{corollary}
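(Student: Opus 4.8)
The plan is to deduce the corollary directly from Theorem \ref{main-thm}, specialized to the type $\mathrm{I}_\infty$ factor $\B(\H)$ equipped with its canonical trace. First I would take $\M=\B(\H)$ and $\tau=\Tr$, the canonical faithful, normal, semifinite, tracial weight on $\B(\H)$. Since $\H$ is separable and infinite-dimensional, $(\M,\tau)$ is a semifinite, properly infinite von Neumann factor with separable predual, so the structural hypotheses of Theorem \ref{main-thm} are met.

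Next I would check that $\mathcal{K}_p(\H)$, that is, $L^p(\B(\H),\Tr)\cap\B(\H)=\mathscr{C}_p$ equipped with the norm $\Phi(x)=\max\{\Vert x\Vert_p,\Vert x\Vert\}$ of Definition \ref{prelim_def2}, is a normed ideal in the sense of Definition \ref{prelim_def1}. Unitary invariance of $\Phi(\cdot)$ and completeness of $\mathscr{C}_p$ are classical; since the largest singular value of $x$ is dominated by the $\ell^p$-norm of its singular value sequence, one has $\Vert x\Vert\le\Vert x\Vert_p$, so in fact $\Phi(\cdot)=\Vert\cdot\Vert_p$ on $\mathscr{C}_p$ and $\Phi(\cdot)$ is $\Vert\cdot\Vert$-dominating with constant $1$; and $\F(\B(\H),\Tr)\subseteq\mathscr{C}_p\subseteq\K(\B(\H),\Tr)$, since $\F(\B(\H),\Tr)$ is the ideal of finite-rank operators, $\K(\B(\H),\Tr)$ its operator-norm closure (the compact operators), and $\mathscr{C}_p$ sits between them. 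By Lemma \ref{prelim_lemma2} this ideal is moreover minimal, so $\mathcal{K}_\Phi^0(\M,\tau)=\mathcal{K}_\Phi(\M,\tau)=\mathcal{K}_p(\H)$.

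The only computation I would carry out is the verification of condition (\ref{not-equiv-to-trace-norm}): if $e\in\mathcal{P}\mathcal{F}(\M,\tau)$ has rank $n$, then $\Phi(e)=\Vert e\Vert_p=n^{1/p}$ while $\tau(e)=n$, whence $\Phi(e)/\tau(e)=n^{1/p-1}\to 0$ as $\tau(e)=n\to\infty$ precisely because $p>1$. (Equivalently, this is the content of Remark \ref{rem-B(H)}: the $\Vert\cdot\Vert_p$-norm is not equivalent to the trace norm $\Vert\cdot\Vert_1$ exactly when $p>1$.) Applying Theorem \ref{main-thm} then produces, for each $x\in\B(\H)$ and each $\epsilon>0$, an irreducible operator $y$ in $\B(\H)$ with $x-y\in\mathcal{K}_\Phi^0(\M,\tau)=\mathcal{K}_p(\H)$ and $\Vert x-y\Vert_p=\Phi(x-y)\le\epsilon$, which is exactly the assertion. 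There is no genuine obstacle here beyond these routine verifications; the substantive content is entirely carried by Theorem \ref{main-thm}, and the role of the hypothesis $p>1$ is isolated in the elementary observation that $\Phi(e)/\tau(e)=n^{1/p-1}$ collapses to $0$ only in that range.
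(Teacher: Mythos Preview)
Your proposal is correct and follows exactly the route the paper takes: the corollary is stated immediately after Theorem \ref{main-thm} with only the preceding remark that, in $\B(\H)$, the $\Vert\cdot\Vert_p$-norm is $\Vert\cdot\Vert$-dominating and unitarily invariant and satisfies (\ref{not-equiv-to-trace-norm}) precisely when $p>1$. Your argument merely makes these verifications explicit (and correctly observes that $\max\{\Vert\cdot\Vert_p,\Vert\cdot\Vert\}=\Vert\cdot\Vert_p$ in this setting), so there is nothing to add.
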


Furthermore, in the setting of semifinite von Neumann factors, we can verify that, for every $p>1$, the $\Vert\cdot \Vert$-dominating, unitarily invariant norm $\max \{ \Vert\cdot \Vert_p,\Vert\cdot \Vert \}$ in Definition \ref{prelim_def2} satisfies the condition defined in (\ref{not-equiv-to-trace-norm}). Thus we have the following result.

\begin{corollary}\label{coro-3.7}
	Suppose that $(\mathcal{M},\tau)$ is a semifinite, properly infinite von Neumann factor with separable predual, where $\tau$ is a faithful, normal, semifinite, tracial weight. 
	
	For $p>1$, let $\mathcal{K}_{p}(\mathcal{M},\tau)$ be a normed ideal of $(\mathcal{M},\tau)$ equipped with a $\Vert\cdot \Vert$-dominating, unitarily invariant norm $\max \{ \Vert\cdot \Vert_p,\Vert\cdot \Vert \}$ as in Definition $\ref{prelim_def2}$. 
	
	Then for each operator $x\in \mathcal{M} $ and every $\epsilon>0$, there exists an irreducible operator $y$ in $\mathcal{M}$ such that
	\begin{equation*}
		x-y\in{\mathcal{K}_{p}(\mathcal{M},\tau)} \quad \mbox{ and } \quad \max \{ \Vert x-y \Vert_p,\Vert x-y \Vert \} \le \epsilon. 
	\end{equation*}
	In other words, the set of irreducible operators in $\M$ is $(\max\{ \Vert\cdot \Vert_p,\Vert\cdot \Vert \})$-norm dense in $(\M,\tau)$. 
\end{corollary}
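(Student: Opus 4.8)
The plan is to deduce Corollary \ref{coro-3.7} directly from Theorem \ref{main-thm}; the only thing that genuinely needs to be checked is that, for $p>1$, the $\Vert\cdot\Vert$-dominating, unitarily invariant norm $\Phi(\cdot)=\max\{\Vert\cdot\Vert_p,\Vert\cdot\Vert\}$ attached to $\K_p(\M,\tau)$ in Definition \ref{prelim_def2} satisfies the limit condition (\ref{not-equiv-to-trace-norm}). That $\K_p(\M,\tau)$ is a minimal normed ideal of $(\M,\tau)$ carrying such a norm is exactly Lemma \ref{prelim_lemma2} together with Definition \ref{prelim_def2}, so no new work is needed on that front.

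To verify (\ref{not-equiv-to-trace-norm}), I would take an arbitrary nonzero projection $e\in\P\F(\M,\tau)$. Since $e=e^{\ast}=e^{2}$ we have $|e|^{p}=e$, hence $\Vert e\Vert_{p}=\tau(|e|^{p})^{1/p}=\tau(e)^{1/p}$, while $\Vert e\Vert=1$. Therefore, once $\tau(e)\ge 1$,
\[
\Phi(e)=\max\{\tau(e)^{1/p},1\}=\tau(e)^{1/p},
\qquad\text{so}\qquad
\frac{\Phi(e)}{\tau(e)}=\tau(e)^{\frac1p-1}=\tau(e)^{-\frac{p-1}{p}}.
\]
Because $p>1$, the exponent $-(p-1)/p$ is strictly negative, so the right-hand side tends to $0$ as $\tau(e)\to\infty$. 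This is precisely (\ref{not-equiv-to-trace-norm}).

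With the hypothesis of Theorem \ref{main-thm} now in force for $\Phi(\cdot)=\max\{\Vert\cdot\Vert_p,\Vert\cdot\Vert\}$, I would simply invoke that theorem: for each $x\in\M$ and every $\epsilon>0$ there is an irreducible $y\in\M$ with $x-y\in\K^{0}_{\Phi}(\M,\tau)$ and $\Phi(x-y)\le\epsilon$. Since $\K_p(\M,\tau)$ is minimal (Lemma \ref{prelim_lemma2}), $\K^{0}_{\Phi}(\M,\tau)=\K_p(\M,\tau)$, and $\Phi(x-y)\le\epsilon$ unwinds to $\max\{\Vert x-y\Vert_p,\Vert x-y\Vert\}\le\epsilon$, which is the claimed estimate; the density assertion follows because $\epsilon>0$ is arbitrary.

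As for the main obstacle: there is essentially none of substance left in this corollary---all of the real analysis lies inside Theorem \ref{main-thm}, hence inside Theorems \ref{B(H)-case} and \ref{II_infty-case}. The only point that deserves a moment's care is that in a properly infinite, semifinite factor the tracial weight $\tau$ attains arbitrarily large \emph{finite} values on projections in $\F(\M,\tau)$ (arbitrarily large rank in the type ${\rm I}_{\infty}$ case, arbitrarily large trace in the type ${\rm II}_{\infty}$ case), so that the limit in (\ref{not-equiv-to-trace-norm}) is genuinely taken over an unbounded range of $\tau(e)$ and the computation above is meaningful rather than vacuous.
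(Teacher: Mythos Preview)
Your proposal is correct and follows exactly the approach the paper intends: the text preceding Corollary~\ref{coro-3.7} simply asserts that for $p>1$ the norm $\max\{\Vert\cdot\Vert_p,\Vert\cdot\Vert\}$ satisfies (\ref{not-equiv-to-trace-norm}) and then invokes Theorem~\ref{main-thm}, without writing out the verification. Your computation $\Phi(e)/\tau(e)=\tau(e)^{1/p-1}\to 0$ and the use of minimality from Lemma~\ref{prelim_lemma2} to identify $\K^{0}_{\Phi}(\M,\tau)$ with $\K_{p}(\M,\tau)$ supply precisely the details the paper leaves implicit.
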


\begin{remark}\label{rem-3.8}
	In Theorem $\ref{main-thm}$, the semifinite factor $(\mathcal{M},\tau)$ is not necessary to be properly infinite. In terms of the same techniques applied in the proof of Theorem $\ref{main-thm}$, The same result for every finite factor with separable predual is also true. The proof for the case of finite factors is similar to that of Theorem $\ref{main-thm}$ and much simpler. 
	
	Note that, if $(\mathcal{M},\tau)$ is a finite factor, then Lemma $2$ of \cite{Fang2} entails that every $ \Vert\cdot \Vert$-dominating, unitarily invariant norm defined on $\M$ is equivalent to the operator norm $ \Vert\cdot \Vert$. In this case, the $\Phi(\cdot)$-norm density of irreducible operators in $\M$ can also be proved by \cite[Theorem 2.1]{Fang}. In this point of view, Theorem $\ref{main-thm}$ is more interesting in semifinite, properly infinite factors  $(\mathcal{M},\tau)$, since the normed ideal $\mathcal{K}_{\Phi}(\mathcal{M},\tau)$ is nontrivial in general.
\end{remark}

\section{Density of irreducible operators up to normed ideal perturbations in the set of  normal operators in semifinite von Neumann factors}

It is worth mentioning that, while studying the density of irreducible operators up to the normed ideal perturbation in semifinite von Neumann factors, an important technique is Theorem \ref{Li-Shen-Shi-3}, which is an extended Weyl-von Neumann theorem for self-adjoint operators in the setting of semifinite von Neumann algebras. The condition stated in (\ref{not-equiv-to-trace-norm}) is crucial to Theorem \ref{Li-Shen-Shi-3}. Thus it is interesting to ask Problem \ref{Problem-1.2}, i.e., whether irreducible operators are $\Phi(\cdot)$-norm dense in each semifinite von Neumann factor with separable predual.  

Let $\H$ be a separable Hilbert space. In the setting of $\B(\H)$, by virtue of the Kato-Rosenblum's theorem, a self-adjoint operator $a\in\B(\H)$ with a non-vanishing spectrally absolutely continuous part, can't be expressed as a diagonal operator plus an arbitrarily small trace norm perturbation. This implies that, to study the density of irreducible operators in each semifinite von Neumann factor $(\M,\tau)$ with respect to the $\{\Vert\cdot\Vert,\Vert\cdot\Vert_1\}$-norm, it is necessary to develop new techniques.

In this section, we introduce a new approach to prove that each normal operator in a semifinite von Neumann factor $(\M,\tau)$ can be expressed as an irreducible operator in $\M$ plus an arbitrarily small $\Phi(\cdot)$-norm perturbation, where the $\Phi(\cdot)$-norm induces a normed ideal $\K_{\Phi}(\M,\tau)$ as defined in Definition $\ref{prelim_def1}$. With this new approach, the restriction in $(\ref{not-equiv-to-trace-norm})$ can be removed from the $\Phi(\cdot)$-norm in the above perturbation.

\begin{theorem}\label{normal-to-irreducible}
	Suppose that $(\mathcal{M},\tau)$ is a semifinite von Neumann factor with separable predual, where $\tau$ is a faithful, normal, semifinite, tracial weight. Let $\mathcal{K}_{\Phi}(\mathcal{M},\tau)$ be a normed ideal of $(\mathcal{M},\tau)$ equipped with a $\Vert\cdot\Vert$-dominating, unitarily invariant norm $\Phi(\cdot)$ $(\mbox{see Definition } \ref{prelim_def1})$. 
	
	For each normal operator $x$ in $ \M$ and every $\epsilon>0$, there is an irreducible operator $y$ in $\mathcal{M}$ such that
	\begin{equation*}
		x-y\in{\mathcal{K}^{0}_{\Phi}(\mathcal{M},\tau)^{}} \quad \mbox{ and } \quad \Phi(x-y)\le\epsilon.
	\end{equation*}
\end{theorem}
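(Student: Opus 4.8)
The plan is to treat the statement type by type. If $\M$ is finite --- of type ${\rm I}_n$ with $n<\infty$, or of type ${\rm II}_1$ --- then $\F(\M,\tau)=\M$, hence $\K_{\Phi}(\M,\tau)=\K^{0}_{\Phi}(\M,\tau)=\M$, and by Lemma~2 of \cite{Fang2} the norm $\Phi(\cdot)$ is equivalent to $\Vert\cdot\Vert$; in this case the conclusion follows immediately from the $\Vert\cdot\Vert$-norm density of irreducible operators in $\M$ (\cite[Theorem~2.1]{Fang}) applied to $x$. So I may assume that $\M$ is of type ${\rm I}_{\infty}$ or of type ${\rm II}_{\infty}$, and, exactly as in Section~3, I would treat these two cases in parallel: using minimal projections in the type ${\rm I}_{\infty}$ case, and the irrational rotation algebra together with Popa's Lemma~\ref{Popa} and Lemma~\ref{Popa2} in the type ${\rm II}_{\infty}$ case, where there are no minimal projections. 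In either case the idea is to leave the ``bulk'' of $x$ untouched and to add a perturbation in $\K^{0}_{\Phi}(\M,\tau)$ of arbitrarily small $\Phi(\cdot)$-norm that destroys every nontrivial reducing projection of $x$.

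\emph{The mechanism.} Write $x=a+ib$ with $a=\frac12(x+x^*)$ and $b=\frac1{2i}(x-x^*)$. Since $x$ is normal, $a$ and $b$ are commuting self-adjoint elements of $\M$ and $W^*(x)=W^*(a,b)$ is abelian. For \emph{any} self-adjoint $a',b'\in\M$ one has $a',b'\in W^*(a'+ib')$, so that
\[
W^*(a'+ib')'\cap\M=\big(\{a'\}'\cap\M\big)\cap\big(\{b'\}'\cap\M\big).
\]
Hence it suffices to construct self-adjoint $a',b'\in\M$ with $a-a',\ b-b'\in\K^{0}_{\Phi}(\M,\tau)$, $\Phi(a-a')+\Phi(b-b')\le\epsilon$, and $\{a'\}'\cap\{b'\}'\cap\M=\CCC I$, and then take $y:=a'+ib'$. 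Working with $a$ and $b$ separately is convenient: each may be perturbed by an element of $\F(\M,\tau)$ and one then passes into $\K^{0}_{\Phi}(\M,\tau)$ by summing pieces of geometrically decaying $\Phi(\cdot)$-norm (Lemma~\ref{prelim_lemma1}(v) and Lemma~\ref{compact-perturbation}), while a projection in $\M$ reduces $y$ exactly when it commutes with both $a'$ and $b'$.

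\emph{The construction.} The relative commutant to be collapsed is $W^*(x)'\cap\M=\{a\}'\cap\{b\}'\cap\M$. I would first perturb $b$ by a self-adjoint element of $\F(\M,\tau)$ of arbitrarily small $\Phi(\cdot)$-norm --- a suitably chosen finite-rank operator mixing the multiplicity fibres of $x$ --- so that afterwards $W^*(x)$ is a masa $\A$ of $\M$, i.e.\ $W^*(x)'\cap\M=\A$. The core step is then to add to $b$ a further self-adjoint $k\in\K^{0}_{\Phi}(\M,\tau)$ assembled from ``weighted-shift''-type partial isometries with non-vanishing weights (in the type ${\rm II}_{\infty}$ case, also from an irreducible hyperfinite ${\rm II}_1$ subfactor carrying a transverse pair of Cartan masas inside each corner of a decomposition of $I$ into finite-$\tau$-rank projections, produced via Lemma~\ref{Popa}, Lemma~\ref{Popa2} and Lemma~\ref{irreducible-opt}), with all weights and block norms decaying geometrically so that $\Phi(k)\le\epsilon$ and $k\in\K^{0}_{\Phi}(\M,\tau)$. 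A direct computation in the spirit of Lemma~\ref{irreducible-opt} should then show that no nontrivial element of $\A$ commutes with $b+k$, so that, taking $a'=a$ and $b'=b+k$ (modulo the first-stage perturbation of $b$), one gets $\{a'\}'\cap\{b'\}'\cap\M=\A\cap\{b+k\}'=\CCC I$.

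\emph{The main obstacle.} I expect the delicate point to be that hypothesis $(\ref{not-equiv-to-trace-norm})$ is \emph{not} assumed here, so the extended Weyl--von Neumann theorem (Theorem~\ref{Li-Shen-Shi-3}) is unavailable; equivalently, by the Kato--Rosenblum theorem, a normal operator with a non-vanishing absolutely continuous part is not a diagonal operator plus a perturbation small in a norm equivalent to the trace norm. Thus the strategy of Theorem~\ref{main-thm}, in which irreducibility was forced by a diagonal part with pairwise distinct eigenvalues, cannot be used; instead $x$ must be kept essentially intact and irreducibility must be read off from the internal structure of $k$ --- from the fact that a ``weighted-shift''-type element with non-vanishing weights commutes with no nontrivial element of $\A$. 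Arranging that a single such $k$ collapses the possibly diffuse commutant $\A$ down to $\CCC I$ while remaining small in an \emph{arbitrary} $\Vert\cdot\Vert$-dominating, unitarily invariant norm --- in particular the trace norm --- is exactly what the sequence of lemmas in this section must accomplish.
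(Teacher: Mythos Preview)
Your treatment of the finite case matches the paper's, and you correctly isolate the obstacle in the properly infinite case. But the infinite-case argument has a genuine gap, and the paper's route is quite different.

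\textbf{The gap.} You assert at the end that $\{a'\}'\cap\{b'\}'\cap\M=\A\cap\{b+k\}'$. This would require $\{a\}'\cap\M=\A$, i.e.\ that $a$ \emph{alone} generates the masa $\A$. Your construction only arranges $W^*(a,b')=\A$; nothing prevents $W^*(a)\subsetneq\A$, in which case $\{a\}'\cap\M$ is strictly larger than $\A$ and a projection commuting with $a$ and $b+k$ need not lie in $\A$ at all. The reduction to ``no nontrivial element of $\A$ commutes with $b+k$'' is therefore unjustified. Separately, the preliminary step ``perturb $b$ by an element of $\F(\M,\tau)$ so that $W^*(x)$ becomes a masa of $\M$'' is asserted but not carried out, and in a type ${\rm II}_\infty$ factor it is not clear how a single $\tau$-finite-rank perturbation can kill a possibly diffuse multiplicity algebra $W^*(x)'\cap\M$.

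\textbf{How the paper proceeds.} The paper does \emph{not} import the Section~3 machinery (irrational rotation algebras, Lemma~\ref{Popa2}, Lemma~\ref{irreducible-opt}) here. Instead it exploits normality only through the fact that spectral projections of $\mbox{Re}\,x$ reduce $\mbox{Im}\,x$, and runs a case analysis on how many mutually orthogonal \emph{infinite} spectral projections $\mbox{Re}\,x$ (or $\mbox{Im}\,x$) admits (Lemmas~\ref{lemma-case-1}--\ref{lemma-case-3}, then Lemmas~\ref{normal-case-1}--\ref{normal-case-2}). In each case one builds an explicit system of matrix units $\{p_{ij}\}\cup\{e_{ij}\}$ adapted to that spectral decomposition, writes $\M\cong\P\otimes\N$ with $\P$ type ${\rm I}_\infty$ and $\N$ finite, chooses invertible positive $a,b\in\N$ with $a+ib$ irreducible in $\N$ (Claim~\ref{lemma-case-1}.1, using Lemma~\ref{Popa} in the ${\rm II}_1$ case), and perturbs $\mbox{Im}\,x$ by an explicit geometric series of weighted matrix units so that $W^*(y)$ visibly contains all the $p_{ij}$, $e_{ij}$, and $a,b$. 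Since $(\P\cup\{a,b\})''=\M$, any projection commuting with $y$ is trivial. The point is that one never needs to make $W^*(x)$ a masa or to control $\{a\}'\cap\M$; one instead shows directly that $W^*(y)$ is large.
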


\begin{remark}
	As mentioned in Remark $\ref{rem-3.8}$, for a finite factor, every $\Vert\cdot \Vert$-dominating, unitarily invariant norm $\Phi(\cdot)$ is equivalent to the operator norm $\Vert\cdot\Vert$. Thus the proof, when $(\mathcal{M},\tau)$ is a finite von Neumann factor with separable predual, is an application of \cite[Theorem 2.1]{Fang}.

Therefore, it is sufficient to prove Theorem $\ref{normal-to-irreducible}$ when $(\mathcal{M},\tau)$ is an infinite von Neumann factor with separable predual. 
\end{remark}

Recall that, for each operator $x$ in a ${\ast}$-algebra $\A$,  $\mbox{Re}\, x$ (the real part of $x$) and $\mbox{Im}\, x$ (the imaginary part of $x$) are defined as 
\begin{equation}
	\mbox{Re}\, x:=\frac{x+x^\ast}{2} \quad \mbox{ and } \quad \mbox{Im}\, x:=\frac{x-x^\ast}{2i}.
\end{equation}
It follows that $x=\mbox{Re} \, x + i \, \mbox{Im} \, x$, i.e., each operator $x$ can be expressed as a sum of self-adjoint operators in $\A$.

In the following, we prepare three lemmas for Lemma \ref{normal-case-1}. In fact,  operators mentioned in the following three lemmas aren't  necessary to be normal.

\begin{lemma}\label{lemma-case-1}
	Suppose that $(\mathcal{M},\tau)$ is a semifinite, properly infinite von Neumann factor with separable predual, where $\tau$ is a faithful, normal, semifinite, tracial weight. Let $\mathcal{K}_{\Phi}(\mathcal{M},\tau)$ be a normed ideal of $(\mathcal{M},\tau)$ equipped with a $\Vert\cdot\Vert$-dominating, unitarily invariant norm $\Phi(\cdot)$ $(\mbox{see Definition } \ref{prelim_def1})$.
	 
	Suppose that $x$ is an operator in $ \M$. If there exist three infinite, spectral projections $\{p_j\}^{3}_{j=1}$ for ${\rm Re} \, x$ with 
	\begin{equation}\label{eq-lem-case-1}
		p_1+p_2+p_3=I,\quad \mbox{ and } \quad p_j \cdot ({\rm Im} \, x)=({\rm Im} \, x) \cdot  p_j, \quad  \mbox{ for all }\ j=1,2,3,
	\end{equation} 
	then for every $\epsilon>0$, there exists an irreducible operator $y$ in $\M$ such that
	\begin{equation}\label{condition-lem-1}
		x-y\in{\mathcal{K}^{0}_{\Phi}(\mathcal{M},\tau)^{}} \quad \mbox{ and } \quad \Phi(x-y)\le\epsilon.
	\end{equation}    
\end{lemma}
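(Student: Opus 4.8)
The plan is to transport the three blocks supplied by the hypothesis onto a common corner and then to make $x$ irreducible by grafting on a \emph{cyclic} triple of small, injective compact operators whose triple product has trivial relative commutant; this route avoids the generalized Weyl--von Neumann Theorem~\ref{Li-Shen-Shi-3}, which is unavailable here because $\Phi(\cdot)$ is not assumed to satisfy $(\ref{not-equiv-to-trace-norm})$.

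First I would record the block decomposition. Put $a={\rm Re}\,x$ and $b={\rm Im}\,x$. Each $p_j$ is a spectral projection of $a$, so it commutes with $a$, and by $(\ref{eq-lem-case-1})$ it commutes with $b$; hence $a$, $b$, and therefore $x$, are block--diagonal for $\H=p_1\H\oplus p_2\H\oplus p_3\H$, say $x=x_1\oplus x_2\oplus x_3$ with $x_j:=p_jxp_j\in\M_j:=p_j\M p_j$. Since $\M$ is a semifinite, properly infinite factor and the $p_j$ are infinite, they are mutually equivalent in $\M$; fixing partial isometries realizing these equivalences identifies $\M$ with the $3\times3$ matrix algebra over $\M_1$, carries $x$ to $\mathrm{diag}(x'_1,x'_2,x'_3)$ with each $x'_j\in\M_1$, and carries $\K_\Phi(\M,\tau)$ (restricted to $\M_1$) to a normed ideal $\K_\Phi(\M_1,\tau)$ as in Definition~\ref{prelim_def1}. (If $\M$ is of type ${\rm I}_\infty$ and $\Phi(\cdot)$ is equivalent to $\Vert\cdot\Vert$, the conclusion already follows from Theorem~\ref{B(H)-case}; so we may assume $\M_1$ is either a type ${\rm II}_\infty$ factor or $\B(\H')$ carrying a genuine normed ideal.)

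The decisive step --- which I expect to be the main obstacle --- is to construct, for the given $\epsilon>0$, an element $C_1\in\K^0_\Phi(\M_1,\tau)$ that is injective with dense range, satisfies $\{x'_1,C_1\}'\cap\M_1=\CCC p_1$, and has $\Phi(C_1)$ small; moreover it should be chosen so that $|C_1|^{1/3}$ again lies in $\K^0_\Phi(\M_1,\tau)$. When $\M_1=\B(\H')$ this is routine: take $C_1$ a weighted unilateral shift plus a diagonal operator with pairwise distinct nonzero entries, with all the data decaying fast enough; then $C_1$ is injective (upper triangular with nonzero diagonal), has dense range, is irreducible, and $|C_1|^{1/3}\in\K^0_\Phi$, so $\{x'_1,C_1\}'\cap\B(\H')\subseteq\{C_1\}'\cap\B(\H')=\CCC$. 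When $\M_1$ is of type ${\rm II}_\infty$ one must work substantially harder, since there are no minimal projections and no Weyl--von Neumann theorem: here I would invoke Popa's Lemma~\ref{Popa} (with Lemma~\ref{Popa2} and the discussion of the irrational rotation algebra $\A_\theta$ preceding Lemma~\ref{irreducible-opt}) to build a small element of $\K^0_\Phi(\M_1,\tau)$ with trivial relative commutant in $\M_1$ --- if necessary after first absorbing into $x'_1$ a small $\K^0_\Phi$--perturbation transverse to a masa of $\M_1$ containing ${\rm Re}\,x'_1$ --- and then adjust it to be injective with dense range with $|C_1|^{1/3}$ still in $\K^0_\Phi$. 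Establishing the existence of such a $C_1$ in the type ${\rm II}_\infty$ case, with $\Phi(\cdot)$ possibly as strong as a trace norm, is the crux.

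Granting $C_1$, I would finish as follows. Write the polar decomposition $C_1=V|C_1|$; since $C_1$ is injective with dense range, $V$ is unitary, so $c:=|C_1|^{1/3}$, $c':=|C_1|^{1/3}$, $c'':=V|C_1|^{1/3}$ are injective, dense--range elements of $\K^0_\Phi(\M_1,\tau)$ with $c''c'c=C_1$. Let $y$ be the matrix $\mathrm{diag}(x'_1,x'_2,x'_3)$ with $c,c',c''$ placed in the $(2,1)$-, $(3,2)$-, and $(1,3)$-entries. Then $x-y\in\K^0_\Phi(\M,\tau)$ and, by Lemma~\ref{prelim_lemma1}, $\Phi(x-y)\le\Phi(c)+\Phi(c')+\Phi(c'')$, which is at most $\epsilon$ after rescaling $C_1$; this gives $(\ref{condition-lem-1})$. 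To see that $y$ is irreducible, let $p\in\M$ be a projection with $py=yp$. Commuting with the $(2,1)$-entry $c$, together with $p=p^*$, forces every off-diagonal block of $p$ to vanish (because $c$ is injective with dense range); commuting with the diagonal part then gives $p_{jj}x'_j=x'_jp_{jj}$ for the diagonal blocks $p_{jj}\in\M_1$; and commuting with $c,c',c''$ gives $cp_{11}=p_{22}c$, $c'p_{22}=p_{33}c'$, $c''p_{33}=p_{11}c''$, whence $C_1p_{11}=c''c'c\,p_{11}=p_{11}c''c'c=p_{11}C_1$, so $p_{11}\in\{x'_1,C_1\}'\cap\M_1=\CCC p_1$. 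Finally $p_{11}\in\{0,p_1\}$ propagates through the three intertwining relations (again using that $c$ and $c'$ have dense range) to force $p_{22},p_{33}\in\{0,p_1\}$ accordingly, so $p\in\{0,I\}$; hence $y$ is irreducible in $\M$.
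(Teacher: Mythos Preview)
Your proposal follows a genuinely different route from the paper, and it contains a real gap in the irreducibility argument, beyond the acknowledged gap in constructing $C_1$.

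The paper keeps $\operatorname{Re} x$ \emph{unchanged} and perturbs only $\operatorname{Im} x$ by a self-adjoint operator.  This is the decisive move: since the $p_j$ are spectral projections of $\operatorname{Re} x=\operatorname{Re} y$, one has $p_j\in W^*(\operatorname{Re} y)\subset W^*(y)$, so any projection $p$ commuting with $y$ is automatically block-diagonal for $p_1\oplus p_2\oplus p_3$.  The paper then refines $p_{11}$ via a system of $\tau$-finite matrix units $\{e_{ij}\}$, forms the type $\mathrm{I}_\infty$ subfactor $\P$ they generate together with $\{p_{ij}\}$, passes to the \emph{finite} corner $\N=\P'\cap\M\cong e_{11}\M e_{11}$, picks an irreducible $a+ib\in\N$ (via Lemma~\ref{Popa} in the type $\mathrm{II}_1$ case), and builds the perturbation as an explicit self-adjoint series in the matrix units, weighted by $a,b$ with geometric decay $3^{-j}$.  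Irreducibility is then proved by showing $W^*(y)\supseteq\P\cup\{a,b\}$.

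Your cyclic construction places $c=c'=|C_1|^{1/3}$ (positive) and $c''=V|C_1|^{1/3}$ in the $(2,1)$, $(3,2)$, $(1,3)$ slots.  The perturbation $y-x$ is then not skew-adjoint, so $\operatorname{Re} y\neq\operatorname{Re} x$; for instance the $(1,2)$-block of $\operatorname{Re} y$ equals $c/2\neq0$.  Hence the spectral-projection hypothesis on $\operatorname{Re} x$ no longer transfers to $\operatorname{Re} y$, and you cannot conclude that a commuting projection $p$ is block-diagonal.  Your assertion that ``commuting with the $(2,1)$-entry $c$, together with $p=p^*$, forces every off-diagonal block of $p$ to vanish'' is not justified: writing out the block equations of $py=yp$ (e.g.\ the $(1,2)$-block gives $p_{12}x_2'+p_{13}c'=x_1'p_{12}+c''p_{32}$) shows that injectivity and dense range of $c$ alone do not kill $p_{12},p_{13},p_{23}$.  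Without block-diagonality, your cycle identity $C_1p_{11}=p_{11}C_1$ never starts.  The repair is precisely what the paper does: make the perturbation $i\cdot(\text{self-adjoint})$ so that $\operatorname{Re} y=\operatorname{Re} x$ and the $p_j$ remain its spectral projections.

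On your acknowledged ``crux'': note that $\M_1=p_1\M p_1$ is itself properly infinite, so Lemma~\ref{Popa} (stated for type $\mathrm{II}_1$ factors) does not apply directly to $\M_1$.  The paper avoids this entirely by reducing to the \emph{finite} corner $\N\cong e_{11}\M e_{11}$ and using geometric decay to secure both $\K^0_\Phi$-membership and the $\Phi$-estimate, with no Weyl--von Neumann input needed.
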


\begin{proof}
	First, we apply systems of matrix units for $\M$ to construct an operator $y$ in $\M$. Then we prove that $y$ is irreducible in $\M$ and it satisfies (\ref{condition-lem-1}). 
	
	Since $\M$ is a semifinite, properly infinite factor, there exists a system of matrix units $\{p_{ij}\}^{3}_{i,j =1}$ for $\M$ such that
	\begin{enumerate}
		\item \quad $p_{jj}=p_{j}$, for all $j=1,2,3$,
		\item \quad $p^*_{ji}=p_{ij}$ for all  $i, j=1,2,3$,
		\item \quad $p_{ij}p_{kl}=\delta_{jk}p_{il}$ for each  $i, j, k, l=1,2,3$,
	\end{enumerate}
	where $\delta_{jk}$ means the Kronecker symbol.
	
	Furthermore, there exists a system of matrix units $\{e_{ij}\}_{i,j\in\NNN}$ for $p_{11} \M p_{11}$ satisfying 
	\begin{enumerate}
		\item \quad  {\scriptsize {SOT}}-$\sum^{}_{i\in\NNN}e_{ii}=p_{11}$;
		\item \quad  $e^*_{ji}=e_{ij}$ for all  $i, j\in\NNN$;
		\item \quad  $e_{ij}e_{kl}=\delta_{jk}e_{il}$ for all  $i, j, k, l\in\NNN$;
		\item \quad  $\{e_{ij}\}_{i,j\in\NNN}\subset \F(\M,\tau)$ and by virtue of Lemma \ref{prelim_lemma1}, there is a uniform upper bound $\eta>0$ such that $\Phi(e_{ij})\leq \eta$ for all $i,j\in\NNN$.
	\end{enumerate}
	
	Define $\P$ to be the von Neumann algebra generated by $\{e_{ij}\}_{i,j\in\NNN}\cup \{p_{ij}\}^{3}_{i,j=1}$. It follows that $\P$ is a type ${\rm I}_{\infty}$ subfactor of $\M$, which is $\ast$-isomorphic to $\B(l^2(\NNN))$. Define $\N=\P^{\prime}\cap \M$. Then $\N$ is a finite subfactor of $\M$, which is $\ast$-isomorphic to $e_{11} \M e_{11}$ and $(\N\cup \P)^{\prime\prime}=\M$. It follows that $\M$ is $\ast$-isomorphic to the von Neumann algebra tensor product $\mathcal P  \otimes \mathcal N$, which is denoted by $\mathcal M\cong \mathcal P \otimes \mathcal N$.

	\vskip 0.3cm
	\noindent\emph{Claim $\ref{lemma-case-1}.1.$ \ There exist two invertible positive operators $a$ and $b$ in $\N$ such that $a+ib$ is irreducible in $\N$. }
	\vskip 0.3cm

	\noindent{\emph{Proof of Claim $\ref{lemma-case-1}.1:$}} Since $\N$ is a finite subfactor of $\M$, we have that $\N$ is either a type ${\rm I}_n$ factor for some $n\in\NNN$ or $\N$ is a type ${\rm II}_1$ subfactor of $\M$.  If $\N$ is a finite type I subfactor of $\M$, then we can use linear algebra techniques to choose a single generator $a+ib$ of $\N$ such that both $a$ and $b$ are invertible and positive. Otherwise, if $\N$ is a type ${\rm II}_1$ subfactor of $\M$, by  Lemma \ref{Popa} there exists an irreducible, hyperfinite type ${\rm II}_1$ subfactor $\R\subseteq\N$. Note that each hyperfinite type ${\rm II}_1$ factor is singly generated (see \cite[Theorem 1]{Saito_2}). It follows that, we can choose $a+i b$ to be a single generator of $\R$, where $a$ and $b$ are invertible, positive operators in $\R$. In each case, $a+ib$ is irreducible in $\N$. This ends the proof of Claim $\ref{lemma-case-1}.1$.

	\vskip 0.3cm

	\noindent {(\emph{End of the proof of Lemma $\ref{lemma-case-1}$})} Define a self-adjoint operator $h$ in $\M$ of the form
	 \begin{equation*}
	 \begin{aligned}
	 	h:=& \mbox{Im}\, x+\frac{\epsilon}{ 3^2 \cdot \eta} \sum\nolimits^{\infty}_{j=1} \frac{1}{ 3^{j}} (e_{jj}p_{12}+p_{21}e_{jj})+ \frac{\epsilon\cdot a}{ 3^2 \cdot  \eta\cdot \Vert a \Vert} \sum\nolimits^{\infty}_{j=1} \frac{1}{ 3^{j}} (e_{jj}p_{13}+p_{31}e_{jj})\\
	 	& +\frac{\epsilon\cdot b }{ 3^2 \cdot  \eta\cdot \Vert b \Vert} \sum\nolimits^{\infty}_{j=1} \frac{1}{ 3^{j}}  (p_{21}e_{1j}p_{13}+p_{31}e_{j1}p_{12}).
	 \end{aligned}
	 \end{equation*}
	 As an application of Lemma \ref{prelim_lemma1}, it follows that $\Phi(\mbox{Im}\, x -h) \le \epsilon/3$.
	 
	 Define $y=\mbox{Re}\, x+i h$.  Thus $\Phi(x-y)< \epsilon$.  To prove that $y$ is irreducible in $\M$, it is sufficient to show that
	 \begin{equation*}
	 	 \{e_{ij}\}_{i,j\in\NNN}\cup \{p_{ij}\}^{3}_{i,j=1} \cup \{a,b\}\subset  W^{\ast}(y),
	 \end{equation*}
	 where $W^{\ast}(y)$ is the von Neumann sub-algebra of $\M$ generated by $y$ and the identity  $I$ of $\M$.
	 
	 In terms of (\ref{eq-lem-case-1}), we have
	 \begin{equation*}
	 	\{p_{jj}\}^3_{j=1}\subset W^{\ast}(\mbox{Re}\,  x)  \subset W^{\ast}(y).
	 \end{equation*}
	 Since $p_{jj} \cdot ({\rm Im} \, x)=({\rm Im} \, x) \cdot  p_{jj}$  for all  $j=1,2,3$, it follows that
	 \begin{equation*}
	 	p_{11}hp_{22}=\frac{\epsilon}{3^2 \cdot  \eta} \sum\nolimits^{+\infty}_{j=1} \frac{1}{ 3^{j}}  (e_{jj}p_{12}) \in  W^{\ast}(y).
	 \end{equation*}
	 This entails that 
	 \begin{equation*}
	 	p_{11}hp_{22}hp_{11}=\frac{\epsilon^2}{3^4 \cdot  \eta^2}  \sum\nolimits^{+\infty}_{j=1} \frac{1}{ 3^{(2j)}}  e_{jj} \in  p_{11} W^{\ast}(y) p_{11}.
	 \end{equation*}
	 A function calculus of the positive operator $p_{11}hp_{22}hp_{11}$ yields that
	 \begin{equation*}
	 	\{e_{jj}\}_{j\in \NNN}  \subseteq p_{11} W^{\ast}(y) p_{11}\subseteq  W^{\ast}(y).
	 \end{equation*}
	 Thus for all $j\in \NNN$, we have $e_{jj}hp_{22}=e_{jj}p_{12}\in  W^{\ast}(y)$. It follows that 
	 \begin{equation*}
	 	p_{12}=\mbox{\scriptsize{SOT}-}\sum\nolimits^{\infty}_{j=1}e_{jj}p_{12} \in  W^{\ast}(y).
	 \end{equation*}
	 Note that  for all $j\in \NNN$,  
	 \begin{equation*}
	 	e_{jj}hp_{33}=\frac{\epsilon \cdot a e_{jj}p_{13}}{ 3^{j+2} \cdot \eta \cdot \Vert a \Vert} \in W^{\ast}(y).
	 \end{equation*} 
	 Since $a$ is positive and invertible in $\N$, the uniqueness of the polar decomposition of $ae_{jj}p_{13}$ entails that $e_{jj}p_{13}$ is in $W^{\ast}(y)$ for all $j\in\NNN$. It follows that 
	 $$p_{13}=\mbox{{\scriptsize {SOT}}-}\sum\nolimits_{j\in\NNN}e_{jj}p_{13}\in W^{\ast}(y).$$
	 Thus, the fact that $W^{\ast}(y)$ is a $\ast$-algebra entails that $\{p_{21}, p_{31}\}\subset W^*(y)$. Therefore, we have 
	 \begin{equation*}
	 	\{p_{ij}\}^{3}_{i,j=1} \subset W^{\ast}(y).
	 \end{equation*}
	 Since  $\P$ is the von Neumann algebra generated by $\{e_{ij}\}_{i,j\in\NNN}\cup \{p_{ij}\}^{3}_{i,j=1}$, we have $\P \subseteq W^{\ast}(y)$. This implies that,  for all $j\in \NNN$ and $1\le k \le 3$, 
	 \begin{equation*}
	 	ap_{k1}e_{jj}p_{1k}=p_{k1}ae_{jj}p_{13}p_{3k} \in W^{\ast}(y).
	 \end{equation*}
	 It follows that 
	 \begin{equation*}
	 	a=\mbox{\scriptsize{SOT}-}\sum\nolimits^{3}_{k=1}\sum\nolimits^{\infty}_{j=1}ap_{k1}e_{jj}p_{1k} \in  W^{\ast}(y).
	 \end{equation*}
	 A similar calculation shows that 
	 \begin{equation*}
	 	b=\mbox{\scriptsize{SOT}-}\sum\nolimits^{3}_{k=1}\sum\nolimits^{\infty}_{j=1}bp_{k1}e_{jj}p_{1k}=\mbox{\scriptsize{SOT}-}\sum\nolimits^{3}_{k=1}\sum\nolimits^{\infty}_{j=1}bp_{k3}(p_{31}e_{j1}p_{12})p_{21}e_{1j}p_{1k} \in  W^{\ast}(y).
	 \end{equation*}
	 Therefore, we have
	  \begin{equation*}
	 	\P \cup \{a,b\} \subset  W^{\ast}(y).
	 \end{equation*}
	 
	 We are ready to prove that $y$ is irreducible in $\M$. Suppose that $q$ is a projection in $\M$ commuting with $y$. It follows that $q$ commutes with every element in $\P$. Thus we have $q \in \N$. Note that $qa=aq$ and $qb=bq$. In terms of Claim $\ref{lemma-case-1}.1$, the operator $a+ib$ is irreducible in $\N$. This entails that $q=0$ or $q=I$. Therefore, $y=\mbox{Re}\, x+ih$ is irreducible in $\M$. This completes the proof.
\end{proof}

\begin{lemma}\label{lemma-case-2}
	Suppose that $(\mathcal{M},\tau)$ is a semifinite, properly infinite von Neumann factor with separable predual, where $\tau$ is a faithful, normal, semifinite, tracial weight. Let $\mathcal{K}_{\Phi}(\mathcal{M},\tau)$ be a normed ideal of $(\mathcal{M},\tau)$ equipped with a $\Vert\cdot\Vert$-dominating, unitarily invariant norm $\Phi(\cdot)$ $(\mbox{see Definition } \ref{prelim_def1})$.
	 
	Suppose that $x$ is an operator in $ \M$. If ${\rm Re} \, x $ is in the form 
	\begin{equation*}
		{\rm Re} \, x=\lambda_1 e_1+\lambda_2 e_2+(I-e_1-e_2) {\rm Re} \, x 
	\end{equation*}
	such that
	\begin{enumerate}
		\item \quad  $\lambda_1\neq \lambda_2$,
		\item \quad  $e_1$ and $e_2$  are infinite, mutually orthogonal, spectral projections for $\mbox{Re} \, x $,
		\item \quad  $e_j \cdot ({\rm Im} \, x )=({\rm Im} \, x ) \cdot e_j$ for $j=1,2$,
		\item \quad  $0<\tau(I-e_1-e_2)<\infty$,
	\end{enumerate}	
	then for every $\epsilon>0$, there exists an irreducible operator $y$ in $\M$ such that
	\begin{equation}
		x-y\in{\mathcal{K}^{0}_{\Phi}(\mathcal{M},\tau)^{}} \quad \mbox{ and } \quad \Phi(x-y)\le\epsilon. \tag{\ref{condition-lem-1}}
	\end{equation}    
\end{lemma}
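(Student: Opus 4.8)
The plan is to mimic the proof of Lemma \ref{lemma-case-1}: leave ${\rm Re}\, x$ untouched and perturb only the imaginary part, producing a self-adjoint $h$ with $h-{\rm Im}\, x\in\mathcal K^{0}_{\Phi}(\M,\tau)$ and $\Phi(h-{\rm Im}\, x)\le\epsilon$, and put $y:={\rm Re}\, x+ih$, so that $x-y=-i(h-{\rm Im}\, x)$ automatically satisfies the two required conclusions. It then remains to make $y$ irreducible, and for this I would arrange that $W^{*}(y)$ contains a type $\mathrm I_{\infty}$ subfactor $\P$ of $\M$ whose relative commutant $\N:=\P'\cap\M$ is a finite factor, together with two invertible positive operators $a,b\in\N$ for which $a+ib$ is irreducible in $\N$. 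Any projection commuting with $y$ then lies in $\P'\cap\M=\N$ and commutes with $a$ and $b$, hence with $W^{*}_{\N}(a+ib)$, and is therefore trivial.

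First set $f:=I-e_{1}-e_{2}$. By hypothesis $(4)$ this is a nonzero finite projection; since $e_{1},e_{2}$ are spectral projections of ${\rm Re}\, x$ we have $e_{1},e_{2},f\in W^{*}({\rm Re}\, x)$, and by $(3)$ the operator ${\rm Im}\, x$ commutes with $e_{1},e_{2},f$, hence is block diagonal for $I=e_{1}+e_{2}+f$. The one genuinely new feature compared with Lemma \ref{lemma-case-1} is that the third block $f$ is finite rather than infinite, so the $\mathrm I_{\infty}$-skeleton must be carved out of an infinite block. Since $\M$ is properly infinite, $e_{1}$ is infinite and $\tau(f)<\infty$, I would write $e_{1}$ (strong-operator) as $\sum_{i\ge 1}e_{ii}$ with the $e_{ii}$ mutually orthogonal and each equivalent to $f$, extend to a system of matrix units $\{e_{ij}\}_{i,j\in\NNN}\subseteq\F(\M,\tau)$ with a uniform bound $\Phi(e_{ij})\le\eta$ (Lemma \ref{prelim_lemma1}$(iv)$, since $R(e_{ij})=e_{ii}\sim f$), and fix partial isometries $v$ ($v^{*}v=e_{1}$, $vv^{*}=e_{2}$) and $w$ ($w^{*}w=e_{11}$, $ww^{*}=f$). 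Then $\P:=W^{*}(\{e_{ij}\}_{i,j}\cup\{v,w\})$ is a type $\mathrm I_{\infty}$ subfactor of $\M$ (its minimal projections $e_{ii}$, $ve_{ii}v^{*}$, $f$ sum to $I$), so $\M\cong\P\otimes\N$ with $\N=\P'\cap\M\cong f\M f$ a finite factor; exactly as in Claim \ref{lemma-case-1}.1 one chooses invertible positive $a,b\in\N$ with $a+ib$ irreducible in $\N$ (finite type $\mathrm I$: linear algebra; type $\mathrm{II}_{1}$: a single generator of a hyperfinite subfactor, via Lemma \ref{Popa} and \cite[Theorem~1]{Saito_2}).

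With $\{e_{1},e_{2},f\}$ there are exactly three off-diagonal ``slots'' in which ${\rm Im}\, x$ vanishes, and I would use all three, setting (with a fixed constant $C$)
\begin{equation*}
h:={\rm Im}\, x+\frac{\epsilon}{C\eta}\sum_{j\ge1}\frac{e_{jj}v^{*}+ve_{jj}}{3^{j}}+\frac{\epsilon a}{C\eta\Vert a\Vert}\sum_{j\ge1}\frac{e_{j1}w^{*}+we_{1j}}{3^{j}}+\frac{\epsilon b}{C\eta\Vert b\Vert}\sum_{j\ge1}\frac{\phi_{j}+\phi_{j}^{*}}{3^{j}},
\end{equation*}
where $\phi_{j}:=ve_{j1}w^{*}$ is a partial isometry from $f$ onto $ve_{jj}v^{*}$; the three series lie in the $(e_{1},e_{2})$-, $(e_{1},f)$- and $(e_{2},f)$-blocks (and their adjoints). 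Each is a strong-operator convergent sum of elements of $\F(\M,\tau)$ of $\Phi$-norm $O(\eta)$ with geometric coefficients, so Lemma \ref{prelim_lemma1}$(v)$ gives $h-{\rm Im}\, x\in\mathcal K^{0}_{\Phi}(\M,\tau)$ and, choosing $C$, $\Phi(h-{\rm Im}\, x)\le\epsilon$. To show $\P\cup\{a,b\}\subseteq W^{*}(y)$ I would bootstrap as in Lemma \ref{lemma-case-1}: compressing $y$ by $e_{1},e_{2},f\in W^{*}({\rm Re}\, x)$ annihilates the block-diagonal term ${\rm Im}\, x$; from $e_{1}he_{2}=\frac{\epsilon}{C\eta}\sum_{j}3^{-j}e_{jj}v^{*}$ and $\bigl(\sum_{j}3^{-j}e_{jj}v^{*}\bigr)\bigl(\sum_{j}3^{-j}e_{jj}v^{*}\bigr)^{*}=\sum_{j}9^{-j}e_{jj}$ a functional calculus recovers every $e_{jj}$, hence $v$; from $e_{1}hf$ together with the $e_{jj}$'s and the polar decomposition $a\,e_{j1}w^{*}=(e_{j1}w^{*})(af)$ (valid because $a>0$ is invertible in $\M$) one recovers $e_{j1}w^{*}$, hence $w$ and all $e_{ij}$, so $\P\subseteq W^{*}(y)$, and also $af\in W^{*}(y)$, whence $a=\sum(\text{its }\P\text{-corners})\in W^{*}(y)$; the same argument applied to $e_{2}hf$ and $b\phi_{j}=\phi_{j}(bf)$ gives $b\in W^{*}(y)$. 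Irreducibility of $y$ then follows as in the first paragraph.

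The step I expect to be the main obstacle is precisely the choice and verification of $h$: with only two infinite blocks and one finite block, the three off-diagonal slots must do triple duty (recover the diagonal matrix units $e_{jj}$ and $v$; recover the row matrix units $e_{1j}$ and $w$ while also transporting $a$; transport $b$), and one must check that the compressions routed through the single finite block $f$ still yield \emph{distinct-eigenvalue} geometric series rather than degenerate ``rank-one'' operators --- which is why the $e_{jj}$ must be extracted first, from a compression through the \emph{infinite} block $e_{2}$, and only then used to split the $f$-compressions. The remaining points (that the designated products really land in the prescribed blocks, that $\P$ is a subfactor with finite relative commutant and $\M\cong\P\otimes\N$, the polar-decomposition identities, and the norm bookkeeping) are routine and run parallel to the corresponding parts of the proof of Lemma \ref{lemma-case-1}.
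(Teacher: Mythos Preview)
Your proposal is correct and follows essentially the same approach as the paper's proof. The only differences are notational: the paper indexes the finite block as $e_{00}=I-e_{1}-e_{2}$ and builds a single system of matrix units $\{e_{ij}\}_{i,j\ge0}$ for $(I-e_{2})\M(I-e_{2})$ together with $\{p_{ij}\}_{i,j=1}^{2}$ for $(e_{1}+e_{2})\M(e_{1}+e_{2})$, whereas you keep the finite block $f$ separate and use the connecting partial isometries $w$ and $v$; but your $we_{1j}$ plays exactly the role of the paper's $e_{0j}$, your $v$ is the paper's $p_{21}$, the three perturbation terms land in the same three off-diagonal blocks with the same factors $1$, $a$, $b$, and the bootstrap (first extract the $e_{jj}$ from the $(e_{1},e_{2})$-block via the distinct eigenvalues $9^{-j}$, then peel off the matrix units and $a$ from the $(e_{1},f)$-block by polar decomposition, then $b$ from the $(e_{2},f)$-block) is identical.
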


\begin{proof}
	Since $\M$ is a semifinite, properly infinite factor, the induced von Neumann algebra $(e_1+e_2)\M(e_1+e_2)$ is also a semifinite, properly infinite factor. Thus there exists a system of matrix units $\{p_{ij}\}^{2}_{i,j =1}$ for $(e_1+e_2)\M(e_1+e_2)$ such that
	\begin{enumerate}
		\item \quad $p_{jj}=e_j$ is infinite in $(e_1+e_2)\M(e_1+e_2)$ for $j=1,2$,
		\item \quad $p^*_{ji}=p_{ij}$ for all  $i, j=1, 2$,
		\item \quad $p_{ij}p_{kl}=\delta_{jk}p_{il}$ for each  $i, j, k, l=1,2$,
	\end{enumerate}
	where $\delta_{jk}$ means the Kronecker symbol.
	
	Furthermore, there exists a system of matrix units $\{e_{i j}\}^{\infty}_{i,j=0}$ for $(I-e_2)\M(I-e_2)$ satisfying 
	\begin{enumerate}
		\item\quad  $e_{00}=I-e_1-e_2$ and  {\scriptsize {SOT}}-$\sum^{\infty}_{j=1}e_{jj}=p_{11}=e_1$;
		\item\quad  $e^*_{ji}=e_{ij}$ for all  $i, j\ge 0$;
		\item\quad  $e_{ij}e_{kl}=\delta_{jk}e_{il}$ for all  $i, j, k, l\ge 0$;
		\item\quad  $\{e_{ij}\}_{i,j\ge 0}\subset \F(\M,\tau)$ and by virtue of Lemma \ref{prelim_lemma1}, there is a uniform upper bound $\eta>0$ such that $\Phi(e_{ij})\leq \eta$ for all $i,j\ge 0$.
	\end{enumerate}
	
	Define $\P$ to be the von Neumann algebra generated by $\{e_{ij}\}_{i,j\ge 0}\cup \{p_{ij}\}^{2}_{i,j=1}$. It follows that $\P$ is a type ${\rm I}_{\infty}$ subfactor of $\M$, which is $\ast$-isomorphic to $\B(l^2(\NNN))$. Define $\N=\P^{\prime}\cap \M$. Then $\N$ is a finite subfactor of $\M$, which is $\ast$-isomorphic to $e_{11} \M e_{11}$ and $(\N\cup \P)^{\prime\prime}=\M$. It follows that $\M$ is $\ast$-isomorphic to the von Neumann algebra tensor product $\mathcal P  \otimes \mathcal N$, which is denoted by $\mathcal M\cong \mathcal P \otimes \mathcal N$.
	
	By virtue of Claim \ref{lemma-case-1}.1, there exist two invertible positive operators $a$ and $b$ in $\N$ such that $a+ib$ is irreducible in $\N$. Define a self-adjoint operator $h$ in $\M$ of the form
	\begin{equation*}
	\begin{aligned}
		h:=& \mbox{Im}\, x+\frac{\epsilon \cdot a}{ 3^2 \cdot \eta \cdot \Vert a \Vert} \sum\nolimits^{\infty}_{j=1} \frac{1}{ 3^{j}} (e_{0j}+e_{j0})+ \frac{\epsilon  \cdot b}{ 3^2 \cdot  \eta\cdot \Vert b \Vert} \sum\nolimits^{\infty}_{j=1} \frac{1}{ 3^{j}} (e_{0j}p_{12}+p_{21}e_{j0})\\
	 	& +\frac{\epsilon }{ 3^2 \cdot  \eta} \sum\nolimits^{\infty}_{j=1} \frac{1}{ 3^{j}}  (e_{jj}p_{12}+p_{21}e_{jj}).
	\end{aligned}
	\end{equation*}
	As an application of Lemma \ref{prelim_lemma1}, it follows that $\Phi(\mbox{Im}\, x -h) \le \epsilon/3$.
	 
	Define $y=\mbox{Re}\, x+i h$.  Thus $\Phi(x-y)< \epsilon$.  To prove that $y$ is irreducible in $\M$, it is sufficient to show that
	\begin{equation*}
	 	 \{e_{ij}\}_{i,j \ge 0}\cup \{p_{ij}\}^{2}_{i,j=1} \cup \{a,b\}\subset  W^{\ast}(y).
	\end{equation*}
	 
	Note that $e_1$ and $e_2$ are spectral projections of $\mbox{Re}\,  x$. This entails that
	\begin{equation*}
		\{e_{00}, p_{11}, p_{22}\} \subset W^{\ast}(\mbox{Re}\,  x)  \subset W^{\ast}(y).
	\end{equation*}
	Since $p_{jj} \cdot ({\rm Im} \, x )=({\rm Im} \, x ) \cdot p_{jj} $ for $j=1,2$, it follows that
	\begin{equation*}
		p_{11}hp_{22}=\frac{\epsilon}{3^2 \cdot  \eta} \sum\nolimits^{+\infty}_{j=1} \frac{1}{ 3^{j}}  (e_{jj}p_{12}) \in  W^{\ast}(y).
	\end{equation*}
	A similar argument as in Lemma \ref{lemma-case-1} entails that 
	\begin{equation*}
	 	\{e_{jj}\}_{j\ge 0} \cup \{p_{ij}\}^{2}_{i,j=1} \subset  W^{\ast}(y).
	\end{equation*}
	Note that  for all $j\ge 1$, 
	\begin{equation*}
		e_{00}he_{jj}=\frac{\epsilon \cdot a \cdot e_{0j}}{ 3^{j+2} \cdot \eta \cdot \Vert a \Vert} \in W^{\ast}(y).
	\end{equation*} 
	Since $a$ is positive and invertible in $\N$, the uniqueness of the polar decomposition of $ae_{0j}$ entails that $e_{0j}$ is in $W^{\ast}(y)$ for all $j\ge 1$. It follows that 
	$$\{e_{ij}\}_{i,j\ge 0} \cup \{p_{ij}\}^{2}_{i,j=1} \subset  W^{\ast}(y).$$
	Thus, we have $\P \subseteq W^{\ast}(y)$. This implies that,  for all $j\ge 1$, 
	\begin{equation*}
		ae_{00} \in W^{\ast}(y), \quad ae_{jj} \in W^{\ast}(y), \quad \mbox{ and } \quad  ap_{21}e_{jj}p_{12} \in W^{\ast}(y).
	\end{equation*}
	It follows that 
	\begin{equation*}
		a=\mbox{\scriptsize{SOT}-}\sum\nolimits^{\infty}_{j=0}ae_{jj} +\mbox{\scriptsize{SOT}-}\sum\nolimits^{\infty}_{j=1}ap_{21}e_{jj}p_{12} \in  W^{\ast}(y).
	\end{equation*}
	A similar calculation shows that $b\in  W^{\ast}(y)$.
	Therefore, we have $\P \cup \{a,b\} \subset  W^{\ast}(y)$.
	
	We are ready to prove that $y$ is irreducible in $\M$. A similar argument as in Lemma \ref{lemma-case-1} entails that  $y=\mbox{Re}\, x+ih$ is irreducible in $\M$. This completes the proof.
\end{proof}

\begin{lemma}\label{lemma-case-3}
	Suppose that $(\mathcal{M},\tau)$ is a semifinite, properly infinite von Neumann factor with separable predual, where $\tau$ is a faithful, normal, semifinite, tracial weight. Let $\mathcal{K}_{\Phi}(\mathcal{M},\tau)$ be a normed ideal of $(\mathcal{M},\tau)$ equipped with a $\Vert\cdot\Vert$-dominating, unitarily invariant norm $\Phi(\cdot)$ $(\mbox{see Definition } \ref{prelim_def1})$.
	 
	Suppose that $x$ is an operator in $ \M$. If ${\rm Re} \, x $ is in the form 
	\begin{equation*}
		{\rm Re} \, x=\lambda_1 e_1+\lambda_2 e_2
	\end{equation*}
	such that
	\begin{enumerate}
		\item \quad  $\lambda_1$ and $ \lambda_2$ are real numbers with $\lambda_1\neq \lambda_2$,
		\item \quad  $e_1$ and $e_2$  are infinite,  spectral projections for ${\rm Re} \, x $ with $e_1+e_2=I$,
		\item \quad  $e_j \cdot ({\rm Im} \, x )=({\rm Im} \, x ) \cdot e_j$ for $j=1,2$,
	\end{enumerate}
	then for every $\epsilon>0$, there exists an irreducible operator $y$ in $\M$ such that
	\begin{equation}
		x-y\in{\mathcal{K}^{0}_{\Phi}(\mathcal{M},\tau)^{}} \quad \mbox{ and } \quad \Phi(x-y)\le\epsilon. \tag{\ref{condition-lem-1}}
	\end{equation}    
\end{lemma}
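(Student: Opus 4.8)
The plan is to deduce Lemma \ref{lemma-case-3} from Lemma \ref{lemma-case-2}. Lemma \ref{lemma-case-2} handles a real part of the shape $\lambda_1e_1+\lambda_2e_2+(\text{finite block})$, and the only thing that prevents us from applying it directly is that here $e_1+e_2=I$, so there is no finite block. I would therefore create one by a small finite-rank adjustment of ${\rm Re}\,x$ inside $e_1$; since $\M$ is properly infinite and $e_1$ is infinite this is possible, and the only non-formal point is that the adjustment must be compatible with ${\rm Im}\,x$, i.e.\ it must be supported on a finite-trace projection that \emph{reduces} ${\rm Im}\,x$.

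The first step is to produce such a projection. Put $k:=e_1({\rm Im}\,x)e_1$, a self-adjoint element of the semifinite, properly infinite factor $e_1\M e_1$. Because ${\rm Im}\,x$ commutes with $e_1$ and $e_2$, we have ${\rm Im}\,x=k+e_2({\rm Im}\,x)e_2$ with the two summands supported under $e_1$ and $e_2$ respectively, so for a projection $f\le e_1$ one has $f({\rm Im}\,x)=({\rm Im}\,x)f$ if and only if $fk=kf$; thus it suffices to find a nonzero, finite-trace projection in the relative commutant $\R:=\{k\}'\cap e_1\M e_1$. A spectral projection of $k$ need not work (it can have infinite trace when $k$ has purely infinite spectral multiplicity), so instead I would use that $Z(\R)=W^*(k)$ and hence that $\R$ decomposes as a direct integral of factors over $W^*(k)\cong L^\infty(X,\mu)$; none of the fibres is of type ${\rm III}$ because $\tau|_{\R}$ is a faithful normal trace, so each fibre is semifinite, $\tau|_{\R}$ is a direct integral of semifinite traces, and therefore $\tau|_{\R}$ is semifinite. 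Hence $\R$ contains a nonzero projection $f$ with $\tau(f)<\infty$; note $f\in\F(\M,\tau)\subseteq\K_\Phi(\M,\tau)$, so $\Phi(f)<\infty$, and $f({\rm Im}\,x)=({\rm Im}\,x)f$.

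Now choose a real number $\mu\notin\{0,\ \lambda_2-\lambda_1\}$ with $|\mu|\,\Phi(f)\le\epsilon/2$ and set $x'':=x+\mu f$. Then $x-x''=-\mu f\in\F(\M,\tau)\subseteq\K^0_\Phi(\M,\tau)$ with $\Phi(x-x'')\le\epsilon/2$, while ${\rm Im}\,x''={\rm Im}\,x$ and
\[
{\rm Re}\,x''=\lambda_1(e_1-f)+(\lambda_1+\mu)f+\lambda_2e_2,
\]
whose three eigenvalues $\lambda_1,\ \lambda_1+\mu,\ \lambda_2$ are pairwise distinct. Here $e_1-f$ and $e_2$ are infinite, mutually orthogonal spectral projections of ${\rm Re}\,x''$, both commuting with ${\rm Im}\,x''$ (using $f({\rm Im}\,x)=({\rm Im}\,x)f$); $I-(e_1-f)-e_2=f$ satisfies $0<\tau(f)<\infty$; and $(I-(e_1-f)-e_2)\,{\rm Re}\,x''=(\lambda_1+\mu)f$. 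Thus $x''$ meets the hypotheses of Lemma \ref{lemma-case-2} (with its $e_1,e_2$ taken to be $e_1-f$ and $e_2$), so there is an irreducible operator $y$ in $\M$ with $x''-y\in\K^0_\Phi(\M,\tau)$ and $\Phi(x''-y)\le\epsilon/2$. Finally $x-y=(x-x'')+(x''-y)\in\K^0_\Phi(\M,\tau)$ and $\Phi(x-y)\le\epsilon$, which is (\ref{condition-lem-1}).

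The main obstacle is the step in the second paragraph: ${\rm Im}\,x$ may admit no finite-trace spectral projection inside $e_1$, yet its relative commutant there is still a semifinite algebra, and it is precisely semifiniteness that delivers a nonzero finite-trace projection commuting with ${\rm Im}\,x$; everything after that is a direct reduction to Lemma \ref{lemma-case-2}. (Alternatively one could give a direct construction mirroring the proof of Lemma \ref{lemma-case-2}, replacing its ``pivot'' projection $e_{00}=I-e_1-e_2$ — which is absent here — by a finite-rank subprojection of $e_1$ and arranging every matrix-unit recovery step to straddle the $e_1/e_2$ divide so that the identity $e_1({\rm Im}\,x)e_2=0$ annihilates the ${\rm Im}\,x$-part; but the reduction above is shorter and reuses Lemma \ref{lemma-case-2} verbatim.)
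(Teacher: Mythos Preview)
There is a genuine gap in the key step: the projection $f$ you need may simply fail to exist. You want a nonzero $f\le e_1$ with $\tau(f)<\infty$ and $fk=kf$, where $k=e_1({\rm Im}\,x)e_1$; but nothing in the hypotheses prevents $k$ from having purely continuous simple spectrum. Concretely, take $e_1\M e_1\cong\B(L^2[0,1])$ and $k=M_t$ (multiplication by the coordinate). Then $\R=\{k\}'\cap e_1\M e_1=L^\infty[0,1]$, and every nonzero projection $M_{1_E}$ in $\R$ has infinite rank, hence infinite trace. The analogous type~${\rm II}_\infty$ example $e_1\M e_1\cong\mathcal N\bar\otimes\B(L^2[0,1])$ with $k=1\otimes M_t$ behaves the same way: any nonzero projection in $\R=\mathcal N\bar\otimes L^\infty[0,1]$ dominates (in the Murray--von Neumann order) a tensor $r\otimes M_{1_E}$ with $r\ne 0$ and $|E|>0$, and hence has infinite $\tau_{\mathcal N}\otimes\operatorname{Tr}$. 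So your reduction to Lemma~\ref{lemma-case-2} cannot be carried out in general.

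The error in your justification is the conflation of two different notions of ``semifinite''. The algebra $\R$ is indeed a semifinite von Neumann algebra (it has \emph{some} faithful normal semifinite trace), but that does \emph{not} say that the restricted weight $\tau|_\R$ is semifinite. In the example above, $\R=L^\infty[0,1]$ is abelian (so certainly semifinite), yet $\operatorname{Tr}|_\R$ takes only the values $0$ and $\infty$. Your direct-integral argument breaks at exactly this point: when $\tau|_\R$ is disintegrated over the fibres, the fibre weights can be the trivial ``$+\infty$ everywhere'' weight, which is faithful, normal and tracial on any factor (including type~${\rm III}$), so neither ``fibres are not type~${\rm III}$'' nor ``fibre weights are semifinite'' follows.

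For comparison, the paper does not try to carve out a finite piece at all. Instead it splits $e_2$ into two \emph{infinite} subprojections $p_{22},p_{33}$ that still commute with ${\rm Im}\,x$ (this is always possible: either some eigenspace of $e_2({\rm Im}\,x)e_2$ is infinite and can be halved, or the spectral measure decomposes $e_2$ into countably many finite spectral pieces whose traces sum to $\infty$, which one regroups into two infinite blocks). A small diagonal perturbation of ${\rm Re}\,x$ on $p_{33}$ then produces three infinite spectral projections commuting with ${\rm Im}\,x$, and one invokes Lemma~\ref{lemma-case-1} rather than Lemma~\ref{lemma-case-2}.
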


\begin{proof}
	Since $\M$ is a semifinite, properly infinite factor, there exists a system of matrix units $\{p_{ij}\}^{3}_{i,j =1}$ for $\M$ such that
	\begin{enumerate}
		\item \quad $p_{11}=e_1$ and $p_{22}+p_{33}=e_2$,
		\item \quad $p_{jj}$ is infinite in $\M$ for $j=1,2,3$,
		\item \quad $p_{jj} \cdot ({\rm Im} \, x )=({\rm Im} \, x ) \cdot p_{jj}$ for $j=1,2,3$,
		\item \quad $p^*_{ji}=p_{ij}$ for all  $i, j=1,2,3$,
		\item \quad $p_{ij}p_{kl}=\delta_{jk}p_{il}$ for each  $i, j, k, l=1,2,3$,
	\end{enumerate}
	where $\delta_{jk}$ means the Kronecker symbol. 
		
	Furthermore, there exists a system of matrix units $\{e_{ij}\}_{i,j\in\NNN}$ for $p_{33} \M p_{33}$ satisfying 
	\begin{enumerate}
		\item \quad   {\scriptsize {SOT}}-$\sum^{}_{i\in\NNN}e_{ii}=p_{33}$;
		\item \quad $e^*_{ji}=e_{ij}$ for all  $i, j\in\NNN$;
		\item \quad  $e_{ij}e_{kl}=\delta_{jk}e_{il}$ for all  $i, j, k, l\in\NNN$;
		\item \quad  $\{e_{ij}\}_{i,j\in\NNN}\subset \F(\M,\tau)$ and by virtue of Lemma \ref{prelim_lemma1}, there is a uniform upper bound $\eta>0$ such that $\Phi(e_{ij})\leq \eta$ for all $i,j\in\NNN$.
	\end{enumerate}
	
	Define a self-adjoint operator $d$ in $\M$ of the following form
	\begin{equation*}
		d:=\mbox{Re} \, x+\sum\nolimits^{\infty}_{j=1}\frac{\epsilon\cdot e_{jj}}{3^{j}\cdot \eta}.
	\end{equation*}
	It follows that $\Phi(\mbox{Re} \, x-d)\le \epsilon /2$. Note that $d+i\, \mbox{Im} \, x$ satisfies the assumption of Lemma \ref{lemma-case-1}. In terms of the proof of Lemma \ref{lemma-case-1}, for $d+i\, \mbox{Im} \, x$, there exists an irreducible operator $y$ in $\M$ such that 
	\begin{equation*}
		\Phi(d+i\, \mbox{Im} \, x - y)\le \frac{\epsilon }{3}.
	\end{equation*}
	Therefore, we have
	\begin{equation*}
		\Phi( x - y)\le \Phi(\mbox{Re} \, x-d) + \Phi ( d+i\, \mbox{Im} \, x -y)\le \frac{\epsilon }{2} +\frac{\epsilon }{3} < \epsilon .
	\end{equation*}
	This completes the proof.
\end{proof}

We are ready to prove Lemma \ref{normal-case-1}.

\begin{lemma}\label{normal-case-1}
	Suppose that $(\mathcal{M},\tau)$ is a semifinite, properly infinite von Neumann factor with separable predual, where $\tau$ is a faithful, normal, semifinite, tracial weight. Let $\mathcal{K}_{\Phi}(\mathcal{M},\tau)$ be a normed ideal of $(\mathcal{M},\tau)$ equipped with a $\Vert\cdot\Vert$-dominating, unitarily invariant norm $\Phi(\cdot)$ $(\mbox{see Definition } \ref{prelim_def1})$.
	 
	Suppose that $x$ is a normal operator in $ \M$. If there exist three infinite, spectral projections $\{p_j\}^{3}_{j=1}$ for $x$ with 
	\begin{equation*}
		p_1+p_2+p_3=I,
	\end{equation*} 
	then for every $\epsilon>0$, there exists an irreducible operator $y$ in $\M$ such that
	\begin{equation}
		x-y\in{\mathcal{K}^{0}_{\Phi}(\mathcal{M},\tau)^{}} \quad \mbox{ and } \quad \Phi(x-y)\le\epsilon. \tag{\ref{condition-lem-1}}
	\end{equation}    
\end{lemma}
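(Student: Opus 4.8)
The plan is to deduce Lemma~\ref{normal-case-1} from Lemma~\ref{lemma-case-1} by first multiplying $x$ by a suitable unimodular scalar. Lemma~\ref{lemma-case-1} asks for three infinite spectral projections summing to $I$ that belong to $\mathrm{Re}\,x$ (not to $x$), and a normal operator possessing three such projections for itself need not possess them for its real part; but, as I will show, it must possess them for $\mathrm{Re}(e^{i\theta}x)$ for a suitable angle $\theta$. The relevant invariances are immediate: $e^{i\theta}x$ is again normal; if $y'$ is irreducible in $\M$ then so is $e^{-i\theta}y'$, since $W^{*}(e^{-i\theta}y')=W^{*}(y')$; and if $z\in\K_{\Phi}^{0}(\M,\tau)$ then $e^{-i\theta}z\in\K_{\Phi}^{0}(\M,\tau)$ with $\Phi(e^{-i\theta}z)=\Phi(z)$, because $\K_{\Phi}^{0}(\M,\tau)$ is an ideal and $\Phi$ is invariant under multiplication by the unitary $e^{-i\theta}I$. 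Hence, if for some $\theta$ we can apply Lemma~\ref{lemma-case-1} to the normal operator $e^{i\theta}x$ and obtain an irreducible $y'\in\M$ with $e^{i\theta}x-y'\in\K_{\Phi}^{0}(\M,\tau)$ and $\Phi(e^{i\theta}x-y')\le\epsilon$, then $y:=e^{-i\theta}y'$ does the job for $x$. Moreover only hypothesis~(i) of Lemma~\ref{lemma-case-1} needs checking: hypothesis~(ii), that the three projections commute with $\mathrm{Im}(e^{i\theta}x)$, is automatic since $e^{i\theta}x$ is normal and the projections lie in the abelian algebra $W^{*}(\mathrm{Re}(e^{i\theta}x))\subseteq W^{*}(e^{i\theta}x)$, which contains $\mathrm{Im}(e^{i\theta}x)$.

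So the crux is the following claim: \emph{there exists $\theta\in\mathbb{R}$ such that $\mathrm{Re}(e^{i\theta}x)$ has three infinite spectral projections summing to $I$.} Let $E$ be the spectral measure of $x$, a projection-valued measure on the compact set $\sigma(x)\subseteq\mathbb{C}$, and set $\mu:=\tau\circ E$, a (possibly non-$\sigma$-finite) Borel measure on $\mathbb{C}$; the hypotheses give pairwise disjoint Borel sets $S_{1},S_{2},S_{3}$ with $p_{j}=E(S_{j})$, $\sum_{j}E(S_{j})=I$, and $\mu(S_{j})=\tau(p_{j})=\infty$. For $\theta\in\mathbb{R}$ the spectral measure of $\mathrm{Re}(e^{i\theta}x)$ is the pushforward of $E$ along $z\mapsto\mathrm{Re}(e^{i\theta}z)$, so $\mathrm{Re}(e^{i\theta}x)$ has three infinite spectral projections summing to $I$ if and only if the pushforward measure $\nu_{\theta}:=(\mathrm{Re}(e^{i\theta}\cdot))_{*}\mu$ admits a partition of $\mathbb{R}$ into three Borel sets of infinite $\nu_{\theta}$-measure. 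I would first record an elementary fact: a Borel measure $\nu$ on $\mathbb{R}$ fails to admit such a partition only if there is a set $F$ with $|F|\le2$ and $\nu(\mathbb{R}\setminus F)<\infty$, i.e. only if $\nu$ is, up to finite total mass, carried by at most two points. (Split $\nu$ into its atomic and non-atomic parts; a non-atomic infinite Borel measure on a standard Borel space splits into infinitely many pieces of infinite measure, and an infinite purely atomic measure with no infinite point-atom likewise splits; so the only obstruction is at most two infinite point-atoms carrying all but finitely much mass.)

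Granting the fact, the claim follows by contradiction. If no $\theta$ works, then applying the fact to $\nu_{0}$ and to $\nu_{\pi/2}$ and pulling back shows that $\mu$ is, up to finite total mass, carried by at most two vertical lines and also by at most two horizontal lines; intersecting, $\mu(\mathbb{C}\setminus W)<\infty$ for some set $W$ of at most four points. Since $\mu(S_{j})=\infty$ for each $j$, each $S_{j}$ must meet $W$ in a point of infinite $\mu$-mass, and these three points are distinct because the $S_{j}$ are disjoint. Choosing $\theta^{*}$ so that $z\mapsto\mathrm{Re}(e^{i\theta^{*}}z)$ is injective on these three points --- only finitely many $\theta$ fail --- makes $\nu_{\theta^{*}}$ have three infinite point-atoms, hence admit the desired three-fold partition, contradicting the assumption. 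This proves the claim, and the lemma follows.

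The main obstacle is the claim, and within it the measure-theoretic bookkeeping: $\tau\circ E$ may be neither $\sigma$-finite nor purely atomic, and continuity from above fails for infinite measures, so the statement that the only obstruction to a three-fold split is ``at most two infinite point-atoms'' must be argued through the measure algebra rather than by naive limiting. Everything downstream of the claim --- the scalar-rotation bookkeeping and the invocation of Lemma~\ref{lemma-case-1} --- is routine.
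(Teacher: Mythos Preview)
Your argument is correct, and it takes a genuinely different route from the paper's. The paper proceeds by a three-way case split on the spectral structure of $\mathrm{Re}\,x$ and $\mathrm{Im}\,x$ themselves --- either one of them already admits three infinite spectral projections summing to $I$ (handled by Lemma~\ref{lemma-case-1}), or one of them has exactly two infinite eigenvalues whose eigenprojections are cofinite with nonzero finite remainder (Lemma~\ref{lemma-case-2}), or cofinite with zero remainder (Lemma~\ref{lemma-case-3}) --- and then invokes the appropriate lemma in each case. Your rotation trick collapses all of this to a single invocation of Lemma~\ref{lemma-case-1}: by choosing $\theta$ well you force $\mathrm{Re}(e^{i\theta}x)$ into the first case, so Lemmas~\ref{lemma-case-2} and~\ref{lemma-case-3} are never needed. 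Interestingly, the measure-theoretic fact you isolate (``a Borel measure on $\mathbb{R}$ failing a three-fold infinite partition must be carried, up to finite mass, by at most two points'') is exactly what is needed to make the paper's unproved assertion ``at least one of the following three cases happens'' rigorous, so both arguments rest on the same combinatorial core; you simply use it more efficiently. Your closing caveat about non-$\sigma$-finiteness is well-placed: the fact does hold in that generality (bisection on intervals, plus the observation that a divergent series of finite nonnegative reals splits into two divergent subseries, handles the non-atomic part), but it deserves the two or three sentences you allude to rather than a bare assertion.
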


\begin{proof}
	Note that if there are three infinite, spectral projections $\{p_j\}^{3}_{j=1}$ for a normal operator $x$ in $\M$ with $p_1+p_2+p_3=I$, then at least one of the following three cases  happens: 
	\begin{enumerate}
		\item [{\bf Case 1}:] \quad for either  $\mbox{Re} \, x $ or $ \mbox{Im} \, x$, there are three, infinite, spectral projections with sum $I$;
		\item [{\bf Case 2}:] \quad either	$\mbox{Re} \, x $ or $ \mbox{Im} \, x$ is in  the  form
		\begin{equation*}
			\mbox{Re} \, x=\lambda_1 e_1+\lambda_2 e_2+(I-e_1-e_2)\mbox{Re} \, x \quad \mbox{ or } \quad \mbox{Im} \, x=\eta_1 f_1+\eta_2 f_2+(I-f_1-f_2)\mbox{Im} \, x
		\end{equation*}
		where $\lambda_1\neq \lambda_2$ (resp. $\eta_1\neq \eta_2$), $e_1$ and $e_2$ (resp. $f_1$ and $f_2$) are infinite, mutually orthogonal, spectral projections for $\mbox{Re} \, x$ (resp. $\mbox{Im} \, x $) such that $I-e_1-e_2$ (resp. $I-f_1-f_2$) is finite and nonzero;
		\item [{\bf Case 3}:] \quad	either $\mbox{Re} \, x $ or $ \mbox{Im} \, x$ is in the form
		\begin{equation*}
			\mbox{Re} \, x=\lambda_1 e_1+\lambda_2 e_2 \quad \mbox{ or } \quad \mbox{Im} \, x=\eta_1 f_1+\eta_2 f_2
		\end{equation*}
		where $\lambda_1 $ and $\lambda_2$ (resp. $\eta_1$ and $\eta_2$)  are real numbers with $\lambda_1\neq \lambda_2$ (resp. $\eta_1\neq \eta_2$), and the projections $e_1$ and $e_2$ (resp. $f_1$ and $f_2$) are infinite, spectral projections for $\mbox{Re} \, x$ (resp. $\mbox{Im} \, x $) with $e_1+e_2=I$ (resp. $f_1+f_2=I$).
	\end{enumerate}
	
	By virtue of Lemma \ref{lemma-case-1}, we prove {\bf Case 1}. By virtue of Lemma \ref{lemma-case-2}, we prove {\bf Case 2}. By virtue of Lemma \ref{lemma-case-3}, we prove {\bf Case 3}. This completes the proof.
	\end{proof}

\begin{lemma}\label{normal-case-2}
	Suppose that $(\mathcal{M},\tau)$ is a semifinite, properly infinite von Neumann factor with separable predual, where $\tau$ is a faithful, normal, semifinite, tracial weight. Let $\mathcal{K}_{\Phi}(\mathcal{M},\tau)$ be a normed ideal of $(\mathcal{M},\tau)$ equipped with a $\Vert\cdot\Vert$-dominating, unitarily invariant norm $\Phi(\cdot)$ $(\mbox{see Definition } \ref{prelim_def1})$. 
	
	Suppose that $x$ is a normal operator in $ \M$. If there are at most two infinite, mutually orthogonal, spectral projections for $ x$, then for every $\epsilon>0$, there is an irreducible operator $y$ in $\mathcal{M}$ such that
	\begin{equation*}
		x-y\in{\mathcal{K}^{0}_{\Phi}(\mathcal{M},\tau)} \quad \mbox{ and } \quad \Phi(x-y)\le\epsilon. \tag{\ref{condition-lem-1}}
	\end{equation*}
\end{lemma}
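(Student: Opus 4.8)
The plan is to reduce the assertion to the self-adjoint parts of $x$ and thence to the results already established. Since $x$ is normal, $W^{\ast}(x)$ is abelian, so every spectral projection of $\mbox{Re}\, x$ or of $\mbox{Im}\, x$ lies in $W^{\ast}(x)$ and is therefore a spectral projection of $x$; thus the hypothesis forces each of $\mbox{Re}\, x$ and $\mbox{Im}\, x$ to admit at most two pairwise orthogonal infinite spectral projections. The first step I would carry out is the structural consequence for a self-adjoint $a\in\M$ with this property: denoting by $e_{1},e_{2}$ the (at most two) eigenvalue atoms of $a$ of infinite trace — each possibly $0$ — one has $\tau(I-e_{1}-e_{2})<\infty$, so
\[
a=\lambda_{1}e_{1}+\lambda_{2}e_{2}+a(I-e_{1}-e_{2}),\qquad a(I-e_{1}-e_{2})\in\F(\M,\tau).
\]
Indeed, were $E_{a}\big(\sigma(a)\setminus\{\lambda_{1},\lambda_{2}\}\big)$ infinite, then — since off $\{\lambda_{1},\lambda_{2}\}$ the spectral measure of $a$ has no atom of infinite trace — one could split $\sigma(a)\setminus\{\lambda_{1},\lambda_{2}\}$ into three Borel pieces each with infinite spectral projection, producing a third infinite orthogonal spectral projection of $a$, against the hypothesis.

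I would then distinguish cases. Suppose $\mbox{Re}\, x$ has two nonzero infinite eigenvalue atoms $\lambda_{1}e_{1},\lambda_{2}e_{2}$ with $\lambda_{1}\neq\lambda_{2}$; being in $W^{\ast}(x)$ they commute with $\mbox{Im}\, x$, so Lemma \ref{lemma-case-2} (if $0<\tau(I-e_{1}-e_{2})<\infty$) or Lemma \ref{lemma-case-3} (if $e_{1}+e_{2}=I$) applies to $x$ directly and yields the conclusion. If it is $\mbox{Im}\, x$ instead that has such a form, apply the relevant lemma to $ix$ (for which $\mbox{Re}(ix)=-\mbox{Im}\, x$), obtain an irreducible $y_{0}$, and take $y=-iy_{0}$: then $W^{\ast}(y)=W^{\ast}(y_{0})$ is irreducible, $x-y=-i(ix-y_{0})\in\mathcal{K}^{0}_{\Phi}(\M,\tau)$, and $\Phi(x-y)=\Phi(ix-y_{0})\le\epsilon$.

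In the remaining case each of $\mbox{Re}\, x$ and $\mbox{Im}\, x$ has at most one infinite eigenvalue atom; letting $e_{1},f_{1}$ be these atoms (possibly $I$) and $p:=e_{1}\wedge f_{1}$, one gets $\tau(I-p)\le\tau(I-e_{1})+\tau(I-f_{1})<\infty$ and, as $p\in W^{\ast}(x)$,
\[
x=c_{0}p+z,\qquad c_{0}\in\CCC,\quad z:=(I-p)x(I-p)\in\F(\M,\tau)\ \text{normal},
\]
with $x=c_{0}I$ when $p=I$. Here I would move to a small $\Phi$-perturbation $x''$ of $x$ satisfying the hypotheses of Lemma \ref{normal-case-1}: first replace $z$ by a finite-spectrum normal $z'\in(I-p)\M(I-p)$ with $\Phi(z-z')\le\|z-z'\|\,\Phi(I-p)$ as small as desired (using Lemma \ref{prelim_lemma1}(iv)); then write $p=\mbox{SOT-}\sum\nolimits_{j\ge 1}e_{jj}$ as a sum of mutually orthogonal finite rank projections with $\Phi(e_{jj})=\Phi(e_{00})=:\eta$ bounded uniformly in $j$ (Lemma \ref{prelim_lemma1}(i), since $e_{jj}\sim e_{00}$), and set
\[
x'':=z'+\sum\nolimits_{j\ge 1}(c_{0}+\delta_{j})e_{jj},
\]
the $\delta_{j}$ being distinct nonzero scalars tending to $0$ and avoiding the finite set $\sigma(z')-c_{0}$. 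Then $\Phi(x-x'')\le\|z-z'\|\,\Phi(I-p)+\eta\sum_{j}|\delta_{j}|$ is at our disposal, $x-x''\in\mathcal{K}^{0}_{\Phi}(\M,\tau)$ by Lemma \ref{prelim_lemma1}(v), and $x''$ is normal with three infinite orthogonal spectral projections summing to $I$: partitioning $\NNN$ into infinite sets $A,B,C$, the projections $\sum_{j\in A}e_{jj}$, $\sum_{j\in B}e_{jj}$ and $\sum_{j\in C}e_{jj}+(I-p)$ are spectral projections of $x''$ by the choice of the $\delta_{j}$. Applying Lemma \ref{normal-case-1} to $x''$ and composing the perturbations finishes this case, hence the lemma.

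The steps I expect to require the most care are the structural dichotomy of the first step — that all ``infinite'' spectral mass concentrates at at most two eigenvalue atoms, which rests on splitting a diffuse or countably atomic infinite part into three infinite spectral pieces — and, in the last case, the choice of $z'$ and of the $\delta_{j}$ guaranteeing that the three blocks of $x''$ really are spectral projections; granting these, every configuration is routed either to Lemmas \ref{lemma-case-2}--\ref{lemma-case-3} or, after dispersing the single infinite block of $x$ at arbitrarily small $\Phi$-cost, to Lemma \ref{normal-case-1}.
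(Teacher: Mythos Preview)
Your proof is correct and follows the same overall strategy as the paper --- a case analysis on the spectral structure of $\mathrm{Re}\,x$ and $\mathrm{Im}\,x$, feeding each case into Lemmas \ref{lemma-case-2}, \ref{lemma-case-3}, or (after a small perturbation) into the three-projection situation --- but your organization differs in one useful respect. The paper splits into four cases and, in its Case~3 (both $\mathrm{Re}\,x$ and $\mathrm{Im}\,x$ have a single infinite atom with finite nonzero complement), performs a fresh matrix-unit construction of $h$ and $y$ analogous to that in Lemma~\ref{lemma-case-1}. You instead merge the paper's Cases~3 and~4 into a single ``remaining case'': writing $x=c_{0}p+z$ with $p=e_{1}\wedge f_{1}$ and $\tau(I-p)<\infty$, you diagonalize $z$ to finite spectrum and disperse the scalar block $c_{0}p$ by adding $\sum_{j}\delta_{j}e_{jj}$, thereby manufacturing three infinite orthogonal spectral projections for the perturbed normal operator and invoking Lemma~\ref{normal-case-1}. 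This is more economical --- no new construction is needed --- at the cost of chaining through Lemma~\ref{normal-case-1} (which in turn routes back to Lemmas~\ref{lemma-case-1}--\ref{lemma-case-3}); the paper's direct treatment of its Case~3 is longer but more self-contained. Your $ix$ trick to swap the roles of real and imaginary parts is a clean way to handle the symmetry that the paper leaves implicit.

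Two small points worth tightening. First, your reference to $e_{00}$ with $\Phi(e_{jj})=\Phi(e_{00})$ is undefined when $p=I$ (i.e.\ $x$ is scalar); simply choose the $e_{jj}$ to have any fixed finite positive trace and invoke unitary invariance directly. Second, your structural step --- that if a self-adjoint $a$ admits at most two mutually orthogonal infinite spectral projections then all infinite spectral mass sits on at most two atoms and $\tau(I-e_{1}-e_{2})<\infty$ --- is correct, but the splitting argument you sketch uses that $\tau\circ E_{a}$ is $\sigma$-finite on $\mathbb{R}$; this follows from separable predual and semifiniteness, and is worth saying explicitly.
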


\begin{proof}
	Note that if there are at most two infinite, mutually orthogonal, spectral projections for the normal operator $x$ in $\M$, then one of the following four cases must happen: 
	\begin{enumerate}
		\item [{\bf Case 1}:] \quad	either $\mbox{Re} \, x $ or $ \mbox{Im} \, x$ is in the form
		\begin{equation*}
			\mbox{Re} \, x=\lambda_1 e_1+\lambda_2 e_2+(I-e_1-e_2)\mbox{Re} \, x \quad \mbox{ or } \quad \mbox{Im} \, x=\eta_1 f_1+\eta_2 f_2+(I-f_1-f_2)\mbox{Im} \, x
		\end{equation*}
		where $\lambda_1 $ and $\lambda_2$ (resp. $\eta_1$ and $\eta_2$)  are real numbers with $\lambda_1\neq \lambda_2$ (resp. $\eta_1\neq \eta_2$), and the projections $e_1$ and $e_2$ (resp. $f_1$ and $f_2$) are infinite, mutually orthogonal, spectral projections for $\mbox{Re} \, x$ (resp. $\mbox{Im} \, x $) such that $I-e_1-e_2$ (resp. $I-f_1-f_2$) is finite and nonzero;
		\item [{\bf Case 2}:] \quad	either $\mbox{Re} \, x $ or $ \mbox{Im} \, x$ is in the form
		\begin{equation*}
			\mbox{Re} \, x=\lambda_1 e_1+\lambda_2 e_2 \quad \mbox{ or } \quad \mbox{Im} \, x=\eta_1 f_1+\eta_2 f_2
		\end{equation*}
		where $\lambda_1\neq \lambda_2$ (resp. $\eta_1\neq \eta_2$), $e_1$ and $e_2$ (resp. $f_1$ and $f_2$) are infinite, spectral projections for $\mbox{Re} \, x$ (resp. $\mbox{Im} \, x $) with $e_1+e_2=I$ (resp. $f_1+f_2=I$);
		\item [{\bf Case 3}:] \quad $\mbox{Re} \, x $ and $ \mbox{Im} \, x$ are in the forms
		\begin{equation*}
			\mbox{Re} \, x=\lambda e+(I-e)\mbox{Re} \, x \quad \mbox{ and } \quad \mbox{Im} \, x=\eta f+(I-f)\mbox{Im} \, x
		\end{equation*}
		where $e$  (resp. $f$) is an infinite, spectral projection for $\mbox{Re} \, x$ (resp. $\mbox{Im} \, x $) with $I-e$ (resp. $I-f$) finite and nonzero;
		\item [{\bf Case 4}:] \quad	either $\mbox{Re} \, x $ or $ \mbox{Im} \, x$ is a scalar multiple of the identity.
	\end{enumerate}
	We prove this lemma in the above four cases.
	
	By virtue of Lemma \ref{lemma-case-2}, we prove {\bf Case 1}. By virtue of Lemma \ref{lemma-case-3}, we prove {\bf Case 2}. In the following, we prove {\bf Case 3.}

		 \vskip 0.5cm 
	{\bf Case 3.} \emph{Suppose that $\mbox{Re} \, x $ and $ \mbox{Im} \, x$ are in the forms 
	\begin{equation*}
		\mbox{Re} \, x=\lambda e+(I-e)\mbox{Re} \, x \quad \mbox{ and } \quad \mbox{Im} \, x=\eta f+(I-f)\mbox{Im} \, x
	\end{equation*}
	}
	\emph{where $e$  (resp. $f$) is an infinite, spectral projection for $\mbox{Re} \, x$ (resp. $\mbox{Im} \, x $) with $I-e$ (resp. $I-f$) finite and nonzero.}
	\vskip 0.5cm

	Without loss of generality, we assume that $\tau(I-e) \ge \tau (I-f)>0$. Since $\M$ is a semifinite, properly infinite factor,  there exists a system of matrix units $\{e_{ij}\}_{i,j\in\NNN}$ for $ \M $ satisfying 
	\begin{enumerate}
		\item \quad $e_{11}= I-e$ and for each $j\in\NNN$, $e_{jj} $ reduces $\mbox{Im} \, x$;
		\item \quad {\scriptsize {SOT}}-$\sum^{}_{i\in\NNN}e_{ii}= I$;
		\item \quad  $e^*_{ji}=e_{ij}$ for all  $i, j\in\NNN$;
		\item \quad $e_{ij}e_{kl}=\delta_{jk}e_{il}$ for all  $i, j, k, l\in\NNN$;
		\item \quad $\{e_{ij}\}_{i,j\in\NNN}\subset \F(\M,\tau)$ and by virtue of Lemma \ref{prelim_lemma1}, there is a uniform upper bound $\eta>0$ such that $\Phi(e_{ij})\leq \eta$ for all $i,j\in\NNN$.
	\end{enumerate} 	
	Define $\P$ to be the von Neumann algebra generated by $\{e_{ij}\}_{i,j\in \NNN}$. It follows that $\P$ is a type ${\rm I}_{\infty}$ subfactor of $\M$, which is $\ast$-isomorphic to $\B(l^2(\NNN))$. Define $\N=\P^{\prime}\cap \M$. Then $\N$ is a finite subfactor of $\M$, which is $\ast$-isomorphic to $e_{11} \M e_{11}$ and $(\N\cup \P)^{\prime\prime}=\M$. It follows that $\M$ is $\ast$-isomorphic to the von Neumann algebra tensor product $\mathcal P  \otimes \mathcal N$, which is denoted by $\mathcal M\cong \mathcal P \otimes \mathcal N$.
	
	By virtue of Claim \ref{normal-case-1}.1, there exist two invertible positive operators $a$ and $b$ in $\N$ such that $a+ib$ is irreducible in $\N$. Define a self-adjoint operator $h$ in $\M$ of the form
	\begin{equation*}
	\begin{aligned}
		h=& \mbox{Im}\, x+\frac{\epsilon \cdot a(e_{12}+e_{21})}{ 3 \cdot \eta \cdot \Vert a \Vert} + \frac{\epsilon  \cdot b}{ \eta\cdot \Vert b \Vert} \sum\nolimits^{\infty}_{j=2} \frac{1}{ 3^{j}} (e_{j,j+1}+e_{j+1,j}).
	\end{aligned}
	\end{equation*}
	As an application of Lemma \ref{prelim_lemma1}, it follows that $\Phi(\mbox{Im}\, x -h) \le \epsilon/2$.
	 
	Define $y=\mbox{Re}\, x+i h$.  Thus $\Phi(x-y)< \epsilon$.  To prove that $y$ is irreducible in $\M$, it is sufficient to show that
	\begin{equation*}
		 \{e_{ij}\}_{i,j \in \NNN}\cup \{a,b\}\subset  W^{\ast}(y).
	\end{equation*}
	
	Since $e_{11}\in W^{\ast}(\mbox{Re}\, x)\subset W^{\ast}(y)$ and $e_{11}\cdot \mbox{Im}\, x=\mbox{Im}\, x \cdot e_{11}$, we have
	\begin{equation*}
		e_{11}h(I-e_{11})= \frac{\epsilon \cdot a\cdot e_{12}}{ 3 \cdot \eta \cdot \Vert a \Vert} \in  W^{\ast}(y).
	\end{equation*} 
	Note that $a$ is positive and invertible in $\N$. The uniqueness of the polar decomposition of $ae_{12}$ entails that $e_{12}$ is in $W^{\ast}(y)$. It follows that $e_{22}\in W^{\ast}(y)$. Since $e_{22}\cdot \mbox{Im}\, x=\mbox{Im}\, x \cdot e_{22}$, we have
	\begin{equation*}
		e_{22}h(I-e_{11}-e_{22})= \frac{\epsilon \cdot b \cdot e_{23}}{ 3^2 \cdot \eta \cdot \Vert b \Vert} \in  W^{\ast}(y).
	\end{equation*} 
	Similarly, we have $\{e_{23},e_{32}, e_{33}\} \subset W^{\ast}(y)$. By induction, it follows that
	$$\{e_{ij}\}_{i,j\in \NNN}  \subset  W^{\ast}(y).$$
	Thus, we have $\P \subseteq W^{\ast}(y)$. This implies that,  for all $j\ge 1$, $ae_{jj} \in W^{\ast}(y)$.
	It follows that 
	\begin{equation*}
	 	a=\mbox{\scriptsize{SOT}-}\sum\nolimits^{\infty}_{j=1}ae_{jj}\in  W^{\ast}(y).
	\end{equation*}
	A similar calculation shows that $b\in  W^{\ast}(y)$.
	Therefore, we have $\P \cup \{a,b\} \subset  W^{\ast}(y)$.
	 
	We are ready to prove that $y$ is irreducible in $\M$. A similar argument as in Lemma \ref{lemma-case-1} entails that  $y=\mbox{Re}\, x+ih$ is irreducible in $\M$. This completes the proof of $\textbf{Case 3}$. 
	 
	To prove  $\textbf{Case 4}$, we assume that $\mbox{Re}\, x$ is a scalar multiple of the identity. We apply a similar argument as in Lemma \ref{lemma-case-3} to construct a self-adjoint, diagonal operator $d$ in the form
	\begin{equation*}
		d=\mbox{Re}\, x+\sum^{\infty}_{j=1}\frac{\epsilon}{\eta} \left(\frac{e_{jj}}{3^{(j+1)}} + \frac{p_{21}e_{jj}p_{12}}{5^{(j+1)}}  + \frac{p_{31}e_{jj}p_{13} }{7^{(j+1)}}\right),
	\end{equation*}
	where $\{p_{ij}\}^{3}_{i,j=1}$ is a system of matrix units for $\M$ such that
	\begin{enumerate}
		\item[(1)] \quad  $p_{11}+p_{22}+p_{33}=I$ for all $j=1,2,3$;
		\item[(2)] \quad  $p^*_{ji}=p_{ij}$ for all  $i, j=1,2,3$;
		\item[(3)] \quad  $p_{ij}p_{kl}=\delta_{jk}p_{il}$ for all  $i, j, k, l=1,2,3$;
		\item[(4)] \quad  $p_{jj}\cdot (\mbox{Im}\, x)=(\mbox{Im}\, x)\cdot  p_{jj} $  for all $j=1,2,3$;
	\end{enumerate}
	and there exists a system of matrix units $\{e_{ij}\}_{i,j\in\NNN}$ for $p_{11} \M p_{11}$ satisfying 
	\begin{enumerate}
		\item[(5)] \quad  {\scriptsize {SOT}}-$\sum^{\infty}_{i=1}e_{ii}=p_{11}$;
		\item[(6)] \quad  $e^*_{ji}=e_{ij}$ for all  $i, j\in\NNN$;
		\item[(7)] \quad  $e_{ij}e_{kl}=\delta_{jk}e_{il}$ for each  $i, j, k, l\in\NNN$;
		\item[(8)] \quad  $\{e_{ij}\}_{i,j\in\NNN}\subset \F(\M,\tau)$ and by virtue of Lemma \ref{prelim_lemma1}, there is a uniform upper bound $\eta>0$ such that $\Phi(e_{ij})\leq \eta$ for all $i,j\in\NNN$.
	\end{enumerate}
	
	Note that $\Phi(d-\mbox{Re}\, x)\le \epsilon /2$. For $d+i\, \mbox{Im}\, x$, we apply Lemma \ref{lemma-case-1} to construct an irreducible operator $y$ in $\M$ such that $\Phi (d+i\, \mbox{Im}\, x-y)<  \epsilon /2$. It follows that
	\begin{equation*}
		\Phi( x -y )\le \Phi(\mbox{Re}\, x-d)+\Phi(d+i\, \mbox{Im}\, x-y)< \epsilon \quad \mbox{ and } \quad x-y\in{\mathcal{K}^{0}_{\Phi}(\mathcal{M},\tau)^{}}.
	\end{equation*} 
	This completes the proof of $\textbf{Case 4}$. Therefore, we complete the whole proof.
\end{proof}

\begin{proof}[Proof of Theorem \ref{normal-to-irreducible}]
	By virtue of Lemma \ref{normal-case-1} and Lemma \ref{normal-case-2}, we can complete the proof.
\end{proof}

\begin{remark}
	Note that Theorem $\ref{normal-to-irreducible}$, Lemma $\ref{lemma-case-1}$, Lemma $\ref{lemma-case-2}$, and Lemma $\ref{lemma-case-3}$ can be all viewed as  evidences to support Problem $\ref{Problem-1.2}$ to have a positive answer.  
\end{remark}

\end{document}